\documentclass{article}
\usepackage{amsmath,amsthm,amssymb,amscd}

\newcommand{\ga}{\alpha}
\newcommand{\gb}{\beta}

\newcommand{\gd}{\delta}
\newcommand{\gw}{\omega}

\newcommand{\eps}{\varepsilon}

\newcommand{\cantor}{2^\gw}

\newcommand{\supp}{\mathrm{supp}}

\newcommand{\dom}{\mathrm{dom}}

\newcommand{\acts}{\curvearrowright}
\newcommand{\hvd}{\mathrm{HOD}}
\newcommand{\vd}{\mathrm{OD}}

\newcommand{\forkindep}[1][]{%
  \mathrel{
    \mathop{
      \vcenter{
        \hbox{\oalign{\noalign{\kern-.3ex}\hfil$\vert$\hfil\cr
              \noalign{\kern-.7ex}
              $\smile$\cr\noalign{\kern-.3ex}}}
      }
    }\displaylimits_{#1}
  }
}

\newtheorem{theorem}{Theorem}[section]

\newtheorem{claim}[theorem]{Claim}
\newtheorem{corollary}[theorem]{Corollary}
\newtheorem{fact}[theorem]{Fact}
\newtheorem{proposition}[theorem]{Proposition}

\theoremstyle{definition}
\newtheorem{definition}[theorem]{Definition}
\newtheorem{example}[theorem]{Example}

\title{Independence relations in the Solovay model II\footnote{2020 AMS subject classification 03E25, 22F05.}}

\author{
Jind{\v r}ich Zapletal\\
University of Florida\\
zapletal@ufl.edu}

\begin{document}
\maketitle

\begin{abstract}
I provide a novel axiomatization of the Solovay model and a purely geometric treatment of the theory of balanced forcing.
\end{abstract}

\section{Introduction}

\noindent In the previous instalment \cite{z:reloadedA}, I provided a ``geometric'' axiomatization of the Solovay model using an independence relation $\forkindep$ reminiscent of stable or simple theories in model theory. Its purpose lies in the ability to analyze the theory of the model without constantly referring back to its construction. In the current paper, I develop the theory of balanced forcing under the geometric axiomatization, which allows a detailed study of many generic extensions of the Solovay model. This methodological change turns out to be a great simplification and generalization of the treatment in \cite{z:geometric} and its sequels such as 
\cite{z:vitali, z:triangles, z:distance, z:ngraphs}. The basic terminology of these papers is retained, even though the new definitions are more general and stated in the choiceless context. 

As for the architecture of the paper, in Section~\ref{balancedsection} I provide a new definition of balanced forcing, and prove basic theorems for it. Section~\ref{examplesection} provides basic forcing examples. As in the case of proper forcing in ZFC, the class of balanced forcings needs to be stratified in order to obtain flexibility instrumental for most independence results. The stratification described in this paper uses independence relations which are variations and weakenings of the master independence relation $\forkindep$ of the Solovay model. Section~\ref{independencesection} defines the concept of an independence relation and derives its main features. The subsequent sections go through a list of independence relations which are interesting in their own right and which also appear in slight disguise in preceding work. Each of these sections defines an independence relation $\perp$ and its associated class of $\perp$-balanced posets, provides examples of $\perp$-balanced forcings, provides methods to create $\perp$-independent tuples often connected with continuous actions of Polish groups, and finally includes a typical preservation theorem for the class of $\perp$-balanced forcings with an associated consistency result. The treatment should not be viewed as definitive, as many other preservation theorems can readily be derived.

The base theory of this paper is the geometric axiomatization of the Solovay model as exposed in \cite{z:reloadedA}. It includes ZF+DC plus definability, inaccessibility, and Baire property axioms and critically, the existence of a master independence relation $\forkindep$ with several properties. The axiomatization together with its basic consequences is recorded in the appendix Section~\ref{axiomatizationsection} for reference purposes. The appendix also explains the details of parlance regarding Cohen-generic points of Polish spaces.

The notation follows the set theoretic standard of \cite{jech:newset} and the parlance of the previous installment \cite{z:reloadedA}. In particular, if $x$ is a set of ordinals then $\vd_x$ denotes the class of all sets definable from $x$ and a finite tuple of ordinals and $\hvd_x$ denotes the transitive part of $\vd_x$.If $P$ is a poset, an open set $O\subset P$ is \emph{regular} if for every $p\in P$ either $p\in O$ or there is a condition $q\leq p$ below which there are no conditions in $O$. The set of regular open subsets of $O$ is closed under arbitrary intersections. 

The work in this paper was partly supported by NSF grant  DMS 2348371.

\section{Balanced forcing: basics}
\label{balancedsection}

This section provides a definition of the class of balanced forcings under the geometric axiomatization.

\begin{definition}
Let $\langle P, \leq\rangle$ be a poset.

\begin{enumerate}
\item Let $x$ be a set of ordinals such that $\langle P, \leq\rangle\in \vd_x$. A nonempty open set $O\subset P$ is \emph{balanced over $x$} if it is in $\vd_x$ and whenever $y_0, y_1$ are sets of ordinals and $p_0, p_1\in O$ are conditions such that $y_0\forkindep[x]y_1$ and $p_0\in\vd_{xy_0}$ and $p_1\in\vd_{xy_1}$ then $p_0, p_1$ have a common lower bound in $P$;
\item the poset $P$ is \emph{balanced} if there is a set $x$ of ordinals such that for every set $y$ of ordinals such that $x\in\hvd_y$ and every condition $p\in P\cap\hvd_y$ there is a set $O\subset P$ balanced over $y$ which consists of conditions stronger than $p$;
\item the poset $P$ is \emph{cofinally balanced} if for every set $x$ of ordinals and every condition $p\in P$ there is a set $y$ of ordinals such that $x, p\in\hvd_y$ and a set $O\subset P$ balanced over $y$ which consists of conditions stronger than $p$.
\end{enumerate}
\end{definition}

\noindent For brevity of exposition, this paper only presents posets which are balanced. Many natural cofinally balanced, not balanced posets are used in \cite{z:geometric}; existence or nonexistence of balanced sets over a set $x$ of ordinals may depend on the internal theory of $\hvd_x$. The following proposition and corollary record a list of basic properties of balanced sets.

\begin{proposition}
\label{extensionproposition1}
Let $\langle P, \leq\rangle$ be a poset and $x$ a set of ordinals such that $\langle P, \leq\rangle\in\vd_x$. Let $O\subset P$ be a nonempty open set in $\vd_x$. The following are equivalent:

\begin{enumerate}
\item $O$ is balanced over $x$;
\item whenever $y_0, y_1$ are sets of ordinals such that $y_0\forkindep[x]y_1$ holds, and $O'_0\in\vd_{xy_0}$ and $O'_1\in\vd_{xy_1}$ are nonempty open subsets of $O$, then $O'_0\cap O'_1\neq 0$.
\end{enumerate}
\end{proposition}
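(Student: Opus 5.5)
The direction (1)$\Rightarrow$(2) is the substantive one; (2)$\Rightarrow$(1) is immediate. For (2)$\Rightarrow$(1), given $y_0\forkindep[x]y_1$ and conditions $p_i\in O\cap\vd_{xy_i}$, I will let $O'_i=\{q\in P\colon q\leq p_i\}$. This set is nonempty and open. It is a subset of $O$ because $O$ is open and $p_i\in O$. It belongs to $\vd_{xy_i}$ because it is defined from $p_i$ and $\langle P,\leq\rangle\in\vd_x$. Then (2) yields some $r\in O'_0\cap O'_1$, and $r$ is a common lower bound of $p_0,p_1$.

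For (1)$\Rightarrow$(2), fix $y_0\forkindep[x]y_1$ and nonempty open sets $O'_i\subset O$ with $O'_i\in\vd_{xy_i}$. The aim is to find conditions $p_i\in O'_i$ which are definable from $x$, a set $y'_i\supseteq y_i$ (or one from which $y_i$ is definable), and ordinals, while keeping $y'_0\forkindep[x]y'_1$. Balancedness of $O$ over $x$ then gives a common lower bound $r$ of $p_0,p_1$. Since each $O'_i$ is open, $r\in O'_0\cap O'_1$, so the intersection is nonempty.

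The difficulty is that without choice a nonempty set in $\vd_{xy_0}$ need not contain an element of $\vd_{xy_0}$. So $y_0$ has to be enlarged, and this is the step I expect to be the main obstacle. My plan is as follows.
\begin{enumerate}
\item Pick any $p_0\in O'_0$. By the definability axiom of the appendix, $p_0\in\vd_{z_0}$ for some set $z_0$ of ordinals.
\item Use the extension/existence axiom of the master relation $\forkindep$ to get a set $z'_0$ of ordinals with the same type as $z_0$ over $xy_0$ (in the sense of the axiomatization) and $xy_0z'_0\forkindep[x]y_1$.
\item The statement ``some element of $O'_0$ is definable from $xy_0$, the parameter $t$, and ordinals'' is a formula in $t$ with parameters from $\vd_{xy_0}$, and it holds of $t=z_0$. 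Hence it holds of $z'_0$, giving $p_0\in O'_0\cap\vd_{xy_0z'_0}$.
\item By monotonicity and symmetry of $\forkindep$ we get $y_1\forkindep[x]y_0z'_0$. Repeat the same argument on the other side to obtain $z'_1$ and $p_1\in O'_1\cap\vd_{xy_1z'_1}$ with $y_0z'_0\forkindep[x]y_1z'_1$.
\end{enumerate}
Applying (1) with $y'_i=y_iz'_i$ then finishes the argument. Which precise form of the extension and type-invariance axioms to cite will be read off from the appendix Section~\ref{axiomatizationsection}.
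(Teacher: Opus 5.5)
Your overall plan is the paper's proof. You enlarge $y_0$ to $y_0z_0$ so that $O'_0$ acquires an element of $\vd_{xy_0z_0}$, do the same on the $y_1$ side, and then apply the balance of $O$ to the two witnesses. Your (2)$\to$(1) direction is exactly the paper's.

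However, step 2 has a genuine gap: no form of the extension axiom gives $y_0z'_0\forkindep[x]y_1$ directly. The set you want $z'_0$ to land in is $A=\{t\subseteq\kappa\colon O'_0\cap\vd_{xy_0t}\neq 0\}$, where $\kappa$ is an ordinal with $z_0\subseteq\kappa$; restricting $t$ this way makes $A$ a set. This $A$ lies in $\vd_{xy_0}$, not in $\vd_x$. So the extension property (Fact~\ref{fact1}(1)) must be applied over the base $xy_0$, and it only yields $z'_0\in A$ with $z'_0\forkindep[xy_0]y_1$, which is independence over the wrong base. Applying extension over $x$ instead cannot work, since no set in $\vd_x$ can pin down the coordinate $y_0$.

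The missing ingredient is the transitivity axiom: $y_0\forkindep[x]y_1$ together with $z'_0\forkindep[xy_0]y_1$ gives $y_0z'_0\forkindep[x]y_1$. This is exactly where the hypothesis $y_0\forkindep[x]y_1$ enters the argument, and your write-up never explicitly uses it there. Step 4 has the same gap. Extension over $xy_1$ gives $z'_1\forkindep[xy_1]y_0z'_0$, and transitivity combined with $y_1\forkindep[x]y_0z'_0$ then gives $y_1z'_1\forkindep[x]y_0z'_0$.

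The appendix has no ``type-invariance'' axiom, and you do not need one. The requirement that $z'_0$ have ``the same type as $z_0$ over $xy_0$'' is fully replaced by demanding $z'_0\in A$, which Fact~\ref{fact1}(1) provides. With these two corrections your argument becomes the paper's proof.
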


\begin{proof}
The implication (2)$\to$(1) is immediate by considering the open sets $O'_0=\{q\in P\colon q\leq p_0\}$ and $O'_1=\{q\in P\colon q\leq p_1\}$. For (1)$\to$(2), suppose that $O$ is balanced over $x$ and $y_0, y_1, O'_0, O'_1$ are as in (2). Use the extension property of $\forkindep$ to find a set $z_0$ of ordinals such that $z_0\forkindep[xy_0]y_1$ and there is an element $p_0\in O'_0\cap \vd_{xy_0z_0}$. By the transitivity property of $\forkindep$, $y_0z_0\forkindep[x]y_1$ holds. Use the extension property of $\forkindep$ to find a set $z_1$ of ordinals such that $z_1\forkindep[xy_1]y_0z_0$ and there is an element $p_1\in O'_1\cap \vd_{xy_1z_1}$. By the transitivity property of $\forkindep$, $y_0z_0\forkindep[x]y_1z_1$ holds. By the balance assumption on $O$, the conditions $p_0$ and $p_1$ have a common lower bound, which is the desired element of $O'_0\cap O'_1$.
\end{proof}

\begin{corollary}
\label{opencorollary}
Let $\langle P, \leq\rangle$ be a poset and $x$ a set of ordinals such that $\langle P, \leq\rangle\in\vd_x$. Let $O\subset P$ be a balanced set over $x$.

\begin{enumerate}
\item any nonempty open subset of $O$ in $\vd_x$ is dense in $O$ and balanced over $x$;
\item the set $\{p\in P\colon O$ is dense below $p\}$ is balanced over $x$;
\item every condition in $O$ decides every statement of the $P$-forcing language with parameters in $\vd_x$, and all conditions in $O$ decide a given statement in the same way.
\end{enumerate}

\noindent Any two regular open subsets of $P$ which are balanced over $x$ are either disjoint or equal.
\end{corollary}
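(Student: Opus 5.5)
The plan is to derive everything from Proposition~\ref{extensionproposition1}(2), applied with suitable independent pairs. The one substantive ingredient from the axiomatization is that $\forkindep$ admits, for any $x$, a set $y$ with $y\forkindep[x]y$ only in the trivial sense, or else two copies $y_0\forkindep[x]y_1$ of the same type. In most steps, however, it suffices to take $y_0=y_1=0$ (or $y_0=x$). I will use that $\forkindep[x]$ holds trivially between $x$ and $x$, or between the empty set and anything; this follows from the basic axioms recorded in the appendix.

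For (1), let $O'\subset O$ be nonempty open in $\vd_x$. Balance is inherited at once, since the defining condition for $O'$ is a special case of that for $O$. For density, take $p\in O$ and consider the open set $O'_0=\{q\in O\colon q\leq p\}$, which lies in $\vd_{xy_0}$ with $y_0$ a set coding $p$ over $x$. Set $O'_1=O'$ with $y_1=0$. Provided $y_0\forkindep[x]0$, which I expect to be a trivial axiom, Proposition~\ref{extensionproposition1} yields $O'_0\cap O'\neq 0$, so $O'$ meets every cone below a point of $O$.

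For (2), write $D=\{p\colon O$ is dense below $p\}$. This set is open, lies in $\vd_x$, and contains $O$. If $p_0\in D\cap\vd_{xy_0}$ and $p_1\in D\cap\vd_{xy_1}$ with $y_0\forkindep[x]y_1$, then the sets $O'_i=\{q\in O\colon q\leq p_i\}$ are nonempty open subsets of $O$ in $\vd_{xy_i}$. Proposition~\ref{extensionproposition1} then gives a common point, which is a common lower bound of $p_0$ and $p_1$.

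For (3), take a statement $\phi$ with parameters in $\vd_x$. The set $O_\phi=\{q\in O\colon q\Vdash\phi\}$ is open and in $\vd_x$, and so is $O_{\neg\phi}$. If both were nonempty, then (1) with $y_0=y_1=0$ would give them a common element, which is impossible. Their union is dense in $O$, so by (1) whichever one is nonempty is dense in $O$, and the other is empty. Hence every $q\in O$ is compatible only with conditions forcing one fixed value, so $q$ itself decides $\phi$ that way, and all of $O$ decides uniformly.

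For the final clause, let $O_0,O_1$ be regular open and balanced over $x$, and suppose they are not disjoint. Then $O_0\cap O_1$ is a nonempty open set in $\vd_x$ contained in each. By (1) it is dense in $O_0$ and in $O_1$. Regularity turns density into equality: if $p\in O_0$, then every $q\leq p$ is compatible with elements of $O_0\cap O_1\subset O_1$, so $p\in O_1$ by regularity; the same argument works symmetrically. The main obstacle is justifying the trivial independence instances such as $y\forkindep[x]0$ from the axioms in the appendix. If they are not directly available, I would instead obtain the pair by the extension property, as in the proof of Proposition~\ref{extensionproposition1}: find $z\forkindep[x]y_0$ with an element of $O'_1$ in $\vd_{xz}$.
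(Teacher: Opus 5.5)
Your proof is correct and follows the paper's route: everything is read off from Proposition~\ref{extensionproposition1}(2), and your treatment of (2), (3) and the final clause matches the paper's. The only deviation is the density step in (1). There you intersect $O'$ with the cone below an arbitrary $p\in O$, so you need $y_0\forkindep[x]0$ for a set $y_0$ coding $p$. This is exactly Fact~\ref{fact1}(1) applied to $A=\{0\}\in\vd_x$, so the obstacle you flag is not real. The paper avoids the issue altogether by testing $O'$ against the $\vd_x$-definable open set $\{p\in O\colon$ no $q\leq p$ lies in $O'\}$, which needs only the nontriviality instance $x\forkindep[x]x$.
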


\begin{proof}
For (1), suppose $O_0\subset O$ is a nonempty open set in $\vd_x$. Its balance is clear from the definitions. For the density, consider $O_1=\{p\in O\colon$ for every $q\leq p$, $q\notin O_0\}$. This is an open subset of $O$ in $\vd_x$; if it is nonempty then by Proposition~\ref{extensionproposition1}, the intersection $O_0\cap O_1$ must be nonempty, which is absurd. Thus, $O_1$ is empty and $O_0$ must be dense in $O$. For (2), note that the set $\{p\in P\colon O$ is dense below $p\}$ inherits the criterion in Proposition~\ref{extensionproposition1}(2) from $O$. For (3), if $\phi$ is a statement of the $P$-forcing language with parameters in $\vd_x$ then the sets $O_0=\{p\in O\colon p\Vdash\phi\}$ and $O_1=\{p\in O\colon p\Vdash\lnot\phi\}$ are open and in $\vd_x$. By (1), if they are both nonempty, then they must have a nonempty intersection, which is absurd. Thus, one of these sets is empty, and the other is then equal to $O$.

For the last sentence, if $O_0\subset P$ and $O_1\subset P$ are balanced over $x$ and regular open, then their intersection is either empty or dense in both $O_0$ and $O_1$ by item (1). If the latter option occurs, the regularity of $O_0$ and $O_1$ implies that $O_0\cap O_1=O_0=O_1$.
\end{proof}

\noindent Corollary~\ref{opencorollary} (2) shows that every balanced set is a subset of a regular open balanced set; the last sentence then opens the possibility of classifying regular open sets which are balanced over $x$. This is a task performed in many places in \cite{z:geometric}. 

Among the many general features of balanced forcing extensions proved in \cite{z:geometric}, I will prove only one here.

\begin{proposition}
Let $P$ be a cofinally balanced poset. In the $P$-extension, every well-ordered sequence of elements of the ground model belongs itself to the ground model.
\end{proposition}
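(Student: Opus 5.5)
Suppose toward a contradiction that $p\in P$ forces that $\dot f$ is a function from some ordinal $\lambda$ to the ground model which is not in the ground model. By induction on $\lambda$ it suffices to treat the case where every proper initial segment of the evaluation of $\dot f$ is in the ground model. (This is the standard reduction, but it is not needed in the argument below.) The plan is to show that any condition below $p$ can be strengthened to decide all of $\dot f$ at once. Thus $\dot f$ is forced to equal a ground model function, which is the required contradiction.

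Let $x$ be a set of ordinals with $P,\leq,\dot f,\lambda,p\in\vd_x$. Use cofinal balance to find a set $y$ of ordinals with $x,p\in\hvd_y$ and a set $O\subset P$ balanced over $y$ consisting of conditions below $p$. Then $\langle P,\leq\rangle,\dot f\in\vd_y$. By Corollary~\ref{opencorollary}(3), every condition in $O$ decides every statement of the $P$-forcing language with parameters in $\vd_y$, and all conditions in $O$ decide each such statement in the same way. For each $\alpha\in\lambda$, apply this to the statement $\dot f(\check\alpha)=\check v$ for $v\in\vd_y$.

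The main obstacle is that the value $\dot f(\alpha)$ is an arbitrary ground model element, which need not lie in $\vd_y$. Hence Corollary~\ref{opencorollary}(3) does not directly decide it. To handle this, I would first show that any $q\in O$ forces $\dot f(\alpha)\in\vd_y$. Pick $q\in O$ and $r\leq q$ with $r\Vdash\dot f(\check\alpha)=\check v$. Use the extension property of $\forkindep$ to find $z_0\forkindep[y]z_1$ such that $z_0,z_1$ realize copies of a parameter defining $r$ and $v$ over $y$. This gives $r_0\in\vd_{yz_0}$ and $r_1\in\vd_{yz_1}$ with values $v_0,v_1$. The conditions $r_0,r_1$ are below elements of $O$, and $O$ together with the formula are in $\vd_y$. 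So Proposition~\ref{extensionproposition1}(2), applied to the open sets of conditions in $O$ forcing $\dot f(\check\alpha)=\check v_i$ for some $v_i$ definable from $yz_i$ in the prescribed way, yields a common extension. It follows that $v_0=v_1$.

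The value is then definable both from $yz_0$ and from $yz_1$ with $z_0\forkindep[y]z_1$. By the axiom of the master independence relation, $\vd_{yz_0}\cap\vd_{yz_1}=\vd_y$ on ground model sets (recorded in the appendix). Hence $v\in\vd_y$. At that point Corollary~\ref{opencorollary}(3) makes all $q\in O$ agree on $\dot f(\alpha)$ for each $\alpha$. Therefore any $q\in O$ forces $\dot f=\check g$, where $g(\alpha)$ is the unique $v$ with $q\Vdash\dot f(\check\alpha)=\check v$. The function $g$ is in the ground model because the forcing relation is definable. Since $p$ was arbitrary, this finishes the proof. I expect the delicate point to be the independent-copies argument placing $v$ in $\vd_y$, in particular arranging with the extension property that both copies of the witness exist below conditions of $O$.
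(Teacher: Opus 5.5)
Your proposal is correct and is essentially the paper's argument. Independent copies $r_0\in O\cap\vd_{yz_0}$ and $r_1\in O\cap\vd_{yz_1}$ with $z_0\forkindep[y]z_1$ are compatible by balance, so their forced values coincide and lie in $\vd_{yz_0}\cap\vd_{yz_1}=\vd_y$, and Corollary~\ref{opencorollary}(3) then makes every condition in $O$ force that value.

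There is one small slip. The argument places the copy $v_0=v_1$ in $\vd_y$, not your original $v$. The fix is immediate: $r_0\in O$ forces $\dot f(\check\alpha)=\check v_0$ with $v_0\in\vd_y$, so by Corollary~\ref{opencorollary}(3) the condition $r$ forces this too, and hence $v=v_0$. The detour through Proposition~\ref{extensionproposition1}(2) is also unnecessary, because the definition of balance applies to $r_0,r_1$ directly.
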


\noindent In particular, cofinally balanced forcing does not add any new sets of ordinals. While this behavior is impossible under the Axiom of Choice, in the choiceless context it is in fact quite common \cite{mathias:barren, dobrinen:barren}.

\begin{proof}
Let $p\in P$ be a condition and let $\tau$ be a name such that $p\Vdash\tau$ is a function from an ordinal $\ga$ to the ground model. Use the cofinal balance to find a set $x$ of ordinals such that $P, p, \tau\in\vd_x$, and there is an open set $O\subset P$ consisting of conditions below $p$ balanced over $x$. It will be enough to show that for each $\gb\in\ga$ there is an element $u\in\vd_{x}$ such that all conditions in $O$ force $\tau(\check\gb)=\check u$.

To see this, use the extension property to find sets $y_0\forkindep[x]y_1$ of ordinals, conditions $q_0\in\vd_{xy_0}$ and $q_1\in\vd_{xy_1}$ in $O$ and elements $u_0\in\vd_{xy_0}$ and $u_1\in\vd_{xy_1}$ such that $q_0\Vdash\tau(\check\gb)=\check u_0$ and $q_1\Vdash\tau(\check\gb)=\check u_1$. By the balance assumption on $O$, the conditions $q_0, q_1$ are compatible, so $u_0=u_1$ must hold. Thus, $u_0\in\vd_{xy_0}\cap\vd_{xy_1}$ and the latter intersection is equal to $\vd_x$ by Fact~\ref{fact1}(3). All conditions in $O$ then decide the statement $\tau(\check\gb)=\check u_0$ in the same way, and as $q_0$ decides it in the affirmative, so must all conditions in $O$.
\end{proof}

\section{Balanced forcing: examples}
\label{examplesection}

In this section, I produce several basic examples of (cofinally) balanced forcings with classification of their balanced sets. These examples are all present in \cite{z:geometric}; however, the arguments are much more natural in the new axiomatic setting.

\begin{example}
\label{ufexample1}
Let $P$ be the poset of all infinite subsets of $\gw$ ordered by modulo finite inclusion.  The poset is designed to add a nonprincipal ultrafilter on $\gw$.

\begin{enumerate}
\item The poset $P$ is balanced;
\item the regular open sets balanced over $x$ are classified by nonprincipal ultrafilters on $\gw$ in $\vd_x$.
\end{enumerate}
\end{example}

\begin{proof}
Start with (2). Let $x$ be a set of ordinals. For a nonprincipal ultrafilter $u\in\hvd_x$ on $\gw$ write $O_u=\{q\in P\colon q$ diagonalizes $u\}$. I will show that 

\begin{itemize}
\item each $O_u$ is regular open and balanced over $x$;
\item every regular open set balanced over $x$ is of the form $O_p$ for some nonprincipal ultrafilter $p\in\hvd_x$.
\end{itemize}

For the first item note that the set $O_u$ is nonempty since the ultrafilter $u$ contains only countably many subsets of $\gw$. In addition, $O_u\subset P$ is clearly open, regular, and in $\vd_x$. To prove the balance of the set $O_u$, suppose that $y_0\forkindep[x]y_1$ are sets of ordinals and $q_0\in \vd_{xy_0}$ and $q_1\in\vd_{xy_1}$ are conditions in the set $O_u$. If $q_0\cap q_1$ is infinite, then it is the required common lower bound of $q_0, q_1$. So, suppose towards a contradiction that the two sets have finite intersection; shrinking them if necessary, I may assume that they are actioally disjoint. By  Fact~\ref{fact1}(4), there is a set $a\in\vd_x$ such that $a\subset\gw$, $q_0\subseteq a$ and $q_1\cap a=0$. Now, consider the question whether $a\in u$ or not. If the answer is affirmative, $q_1$ should not be disjoint from it; if the answer is negative, then the complement of the set $a$ is in $u$ and $q_0$ should not be disjoint from it. In both cases, a contradiction is reached.

For the second item, if $O\subset P$ is a regular open set balanced over $x$, in view of Corollary~\ref{opencorollary} and the regularity assumption, it will be enough to find an ultrafilter $u\in\vd_x$ such that $O\cap O_u\neq 0$. To do that, use the extension axiom to find sets $y_0\forkindep[x]y_1$ of ordinals containing conditions $q_0$ and $q_1$ in $O$ respectively such that for every set $a\subset\gw$ in $\vd_x$, either $q_0\leq a$ or $q_0\leq\gw\setminus a$, and the same for subscript $1$. Since $q_0, q_1$ have a common lower bound  by the balanced assumption on $O$, the sets $\{a\in\vd_x\colon q_0\leq a\}$ and $\{a\in\vd_x\colon q_1\leq a\}$ must be equal. Their common value $u$ belongs to $\vd_x$ by Fact~\ref{fact1}(3), and by the choice of $q_0$ and $q_1$ it is an ultrafilter in $\vd_x$. Both conditions $q_0, q_1$ then witness the fact that $O\cap O_u\neq 0$.

(1) now immediately follows as whenever $x$ is a set of ordinals, in $\hvd_x$ the axiom of choice holds and every infinite subset of $\gw$ is contained in some nonprincipal ultrafilter on $\gw$.
\end{proof}

\begin{example}
\label{linearexample}
Let $V$ be a vector space over a well-orderable field $C$. Let $P$ be the poset of well-orderable linearly independent subsets of $V$, ordered by reverse extension. The poset is designed to add a Hamel basis of $V$ over $C$.

\begin{enumerate}
\item The poset $P$ is balanced;
\item if $x$ is a set of ordinals such that $C, V\in\vd_x$, then the regular open subsets of $P$ balanced over $x$ are classified by maximal linearly independent subsets of $V\cap\vd_x$ in $\vd_x$.
\end{enumerate}
\end{example}

\begin{proof}
To argue for (2), let $x$ be a set of ordinals such that $V, C\in\vd_x$. Note that $C\subset\vd_x$ holds by Fact~\ref{fact1}(2) and $V\cap\vd_x$ is again a vector space over $C$. For each inclusion-maximal linearly independent set $p\subset V\cap\vd_x$ in $\vd_x$ define $O_p=\{q\in P\colon q\leq p\}$. Then

\begin{itemize}
\item for each inclusion-maximal linearly independent subset $p\subset V\cap\vd_x$ in $\vd_x$, the set $O_p$ is regular open and balanced over $x$;
\item every regular open set balanced over $x$ is of this form.
\end{itemize}

To prove the first item, the regularity of $O_p$ is left to the reader. Suppose that $y_0, y_1$ are sets of ordinals such that $y_0\forkindep[x]y_1$. Suppose that $q_0\in\vd_{xy_0}$ and $q_1\in\vd_{xy_1}$ are elements of the open set $O$; I must prove that they have a common lower bound, meaning that their union is a linearly independent set. Suppose towards a contradiction that this fails, and there is a nontrivial linear combination $b$ of elements of $q_0\cup q_1$ giving zero. Use Fact~\ref{fact1}(2) to conclude that $q_0\subset\vd_{xy_0}$ and $q_1\subset\vd_{xy_1}$. Reorganizing the combination $b$, one can obtain an equality $b_0=b_1$, where $b_0$ contains only terms of $b$ in $\vd_{xy_0}$ and $b_1$ contains only terms in $\vd_{xy_1}$. Then the common value of $b_0$ and $b_1$ belongs to $\vd_x$ by the initial independence assumption on $y_0, y_1$ and Fact~\ref{fact1}(3). However, that common value is already expressed as a linear combination of elements of $p$. The linear independence of each of $q_0$ and $q_1$ then shows that the linear combinations $b_0$ and $b_1$ are the same, and the initial combination $b$ must have been trivial as desired.

To prove the second item, suppose that $O\subset P$ is a regular open set balanced over $x$. It will be enough to find a basis $p\in\vd_x$ of $V\cap\vd_x$ such that $O\cap O_p\neq 0$, as by Corollary~\ref{opencorollary} it then follows that $O=O_p$. Just use the extension property of $\forkindep$ to find sets of ordinals $y_0, y_1$ and conditions $q_0, q_1\in O$ such that the linear hull of both $q_0$ and $q_1$ contains $V\cap\vd_x$, $q_0\in\vd_{xy_0}$, $q_1\in\vd_{xy_1}$, and $y_0\forkindep[x]y_1$. By the balance of the set $O$, $q_0, q_1$ are compatible. Thus, for each $v\in V\cap \vd_x$, the linear combination $b_0$ of elements of $q_0$ which yields $v$ must be the same as the linear combination $b_1$ on the $q_1$ side. Since $b_0=b_1\in\vd_{xy_0}\cap\vd_{xy_1}$, Fact~\ref{fact1}(3) shows that these linear combinations belong to $\vd_x$, so must use only elements of $\vd_x$. It follows that the linear hull of $q_0\cap\vd_x=q_1\cap\vd_x$ is equal to $V\cap\vd_x$. By Fact~\ref{fact1}(3) again, the set $p=q_0\cap\vd_x=q_1\cap\vd_x$ belongs to $\vd_x$. The conditions $q_0, q_1$ both show that $O\cap O_p\neq 0$, completing the proof of the second item.

Now, (2) of the proposition follows. To prove (1), suppose that $x$ is a set of ordinals such that $V, C\in\vd_x$, and $p\in P\cap\vd_x$ is a condition. By Fact~\ref{fact1}(2), $p\subset\vd_x$ holds. The canonical well-ordering of $\vd_x$ provides a way of extending $p$ to a linearly independent subset of $V\cap \vd_x$ which is iclusion-maximal and in $\vd_x$. The above work then provides a set balanced over $x$ consisting of conditions stronger than $p$.
\end{proof}

\begin{example}
\label{loexample1}
Let $A$ be an arbitrary set. Let $P$ be the poset of all linear orderings on well-orderable subsets of $A$, ordered by reverse inclusion. The poset is designed to add a linear ordering of the set $A$.

\begin{enumerate}
\item the poset $P$ is balanced;
\item for every set $x$ of ordinals such that $A\in\vd_x$, the regular open sets balanced over $x$ are classified by linear orderings on $A\cap\vd_x$ which belong to $\vd_x$.
\end{enumerate}
\end{example}

\begin{proof}
First, argue for (2). Let $x$ be a set of ordinals such that $A\in\vd_x$. For each linear ordering $p\in\vd_x$ on $A\cap\vd_x$, let $O_p=\{q\in P\colon p\subseteq q\}$. Then 

\begin{itemize}
\item for every linear ordering $p\in\vd_x$, the set $O_p\subset P$ is regular open and balanced over $x$;
\item every regular open set balanced over $x$ is of this form.
\end{itemize}

To prove the first item, it is clear that every set $O_p$ is open, regular, and definable from $x$. To prove the balance of the set $O_p$, let $y_0, y_1$ be sets of ordinals such that $y_0\forkindep[x]y_1$ and $q_0\in\vd_{xy_0}$ and $q_1\in\vd_{xy_1}$ be conditions in $O_p$. It will be enough to show that $q_0$ and $q_1$ agree on the intersection of their domains, because such linear orderings can be amalgamated to a larger one. For this, first note that $\dom(q_0)\subset\vd_{xy_0}$ and $\dom(q_1)\subset\vd_{xy_1}$ by Fact~\ref{fact1}(2), and $\vd_{xy_0}\cap\vd_{xy_1}=\vd_x$ by Fact~\ref{fact1}(3). So, $\dom(q_0)\cap\dom(q_1)=\vd_x\cap A$, and the two linear orders agree on this set, being equal to $p$ on it.

For the second item, suppose that $O\subset P$ is a regular open set balanced over $x$; in view of Corollary~\ref{opencorollary} and the regularity assumption, it is enough to find a linear ordering $p$ on $A\cap\vd_x$ such that $O\cap O_p\neq 0$. Find sets $y_0\forkindep[x]y_1$ of ordinals such that $\vd_{xy_0}$ and $\vd_{xy_1}$ contain conditions $q_0$ and $q_1$ in $O$ respectively such that $A\cap\vd_x\subseteq\dom(q_0)$ and $A\cap\vd_x\subseteq\dom(q_1)$ both hold. Now, since $q_0$ and $q_1$ are compatible by the balance assumption on $O$, it must be the case that $q_0\restriction A\cap\vd_x=q_1\restriction A\cap\vd_x$. This last object, call it $p$, must belong to $\vd_x$ by Fact~\ref{fact1}(3). Now, $p\in\vd_x$ is a linear ordering on $A\cap\vd_x$, and both $q_0$ and $q_1$ show that $O\cap O_p\neq 0$ as required.

(1) now follows. Suppose that $x$ is a set of ordinals such that $A\in\vd_x$ holds, and let $p\in\vd_x\cap P$ be a condition. By Fact~\ref{fact1}(3), $p\subseteq\vd_x$ holds, and one can use the standard well-ordering of $\vd_x$ to extend $p$ to a linear ordering on all of $\vd_x\cap A$ which belongs to $\vd_x$. This ordering yields a set balanced over $x$ consisting of conditions stronger than $p$.
\end{proof}

\noindent The last example is somewhat more demanding. Let $d\geq 1$ be a number, and let $\Gamma_d$ be the graph on $\mathbb{R}^d$ connecting points of rational Euclidean distance. Komj{\' a}th \cite{komjath:rn} proved that each of these graphs has countable chromatic number in ZFC. There is a natural balanced poset adding a coloring of $\Gamma_d$. To state the definitions, fix the dimension $d$. A (partial) \emph{neighborhood coloring} is a (partial) function $c$ on $\mathbb{R}^d$ which to each point in its domain assigns one of its basic open neighborhoods in such a way that if $u, v$ are $\Gamma_d$-connected points in the domain of $c$, then either $u\notin c(v)$ or $v\notin c(u)$ holds.

\begin{example}
\label{algebraicexample}
The poset $P_d$ consists of conditions $p$ such that there is a countable real closed subfield $\supp(p)\subset\mathbb{R}$ such that $p$ is a neighborhood coloring with $\dom(p)=\supp(p)^d$. The ordering is defined by $q\leq p$ if $p\subseteq q$ and for every point $u\in\dom(p)$ and $v\in\dom(q)\setminus\dom(p)$, if $u\in c(v)$ then $u$ is not $\Gamma_d$-connected to $v$. The poset adds a countable $\Gamma_d$-coloring.

\begin{enumerate}
\item $P_d$ is balanced;
\item whenever $x$ is a set of ordinals, the sets balanced over $x$ are classified by neighborhood colorings of $\mathbb{R}^d\cap\vd_x$ in $\vd_x$.
\end{enumerate}
\end{example}

\begin{proof}
The whole argument relies on the following easy claim:

\begin{claim}
Let $C\subset\mathbb{R}$ be a real closed subfield and $v\in\mathbb{R}^d\setminus C^d$ be any point. Then there is a positive real number $\eps>0$ such that no element $u\in C^d$ which is closer than $\eps$ to $v$ is $\Gamma_d$-connected to $v$.
\end{claim}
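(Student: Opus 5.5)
The plan is to show that the points of $C^d$ which are $\Gamma_d$-connected to $v$ all lie in a $C$-definable affine subspace $A$, that the orthogonal projection $w$ of $v$ onto $A$ lies in $C^d$, and then to take $\eps=|v-w|$.

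Let $S\subset C^d$ be the set of points $\Gamma_d$-connected to $v$. If $S$ is empty, any $\eps>0$ works, so assume $S\neq 0$. Let $A$ be the affine hull of $S$ in $\mathbb{R}^d$. Choose $u_0,\dots,u_k\in S$ which are affinely independent and affinely span $A$. Each $u_i$ is $\Gamma_d$-connected to $v$, so $|u_i-v|^2=r_i$ for some rational $r_i$. Subtracting the $i$-th equation from the $0$-th gives, for $1\leq i\leq k$, the linear identity
\[
2(u_i-u_0)\cdot v=|u_i|^2-|u_0|^2-r_i+r_0 .
\]
Call the right-hand side $c_i$. Since $C$ is a field containing $\mathbb{Q}$ and $u_i\in C^d$, we have $c_i\in C$.

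Next, let $w$ be the orthogonal projection of $v$ onto $A$. Write $w=u_0+\sum_{j}t_j(u_j-u_0)$. Because $v-w$ is orthogonal to every $u_i-u_0$, we have $(u_i-u_0)\cdot w=(u_i-u_0)\cdot v=c_i/2$. This gives a linear system for the $t_j$ whose matrix is the Gram matrix of the vectors $u_j-u_0$. That matrix has entries in $C$ and is invertible, by affine independence. By Cramer's rule all $t_j\in C$, and hence $w\in C^d$.

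Since $v\notin C^d$, we get $v\neq w$, and so $h=|v-w|>0$; note that this means $v\notin A$. For every $u\in S\subset A$, Pythagoras gives $|u-v|^2=|u-w|^2+h^2\geq h^2$. Therefore $\eps=h$ works: no point of $C^d$ closer than $\eps$ to $v$ is $\Gamma_d$-connected to $v$.

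The main point needing care is the step showing $w\in C^d$. It requires that the subtracted equations are genuinely linear with coefficients in $C$, and that the Gram matrix is invertible over $C$. Both hold for an arbitrary subfield $C$ containing $\mathbb{Q}$, so real closedness is not needed for this claim.
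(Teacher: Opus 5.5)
Your proof is correct, and it takes a genuinely different route from the paper's.

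The paper argues by contradiction. It picks $u_n\in C^d$ at rational distances $q_n\to 0$ from $v$ and observes that the spheres $S_n$ of radius $q_n$ around $u_n$ intersect exactly in $\{v\}$. It then uses the Hilbert Basis Theorem (the descending chain condition for algebraic sets) to find finitely many of these spheres already intersecting in $\{v\}$. Finally, real closedness of $C$ shows that the unique real solution of this polynomial system, whose coefficients lie in $C$, must lie in $C^d$.

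You instead exploit the special shape of spheres. Subtracting two sphere equations cancels $|v|^2$, so the conditions that the $\Gamma_d$-neighbours of $v$ in $C^d$ impose on $v$ become linear equations with coefficients in $C$. Cramer's rule then makes the orthogonal projection $w$ of $v$ onto their affine hull $C$-rational. (Strictly speaking, the right-hand side of your system for the $t_j$ is $c_i/2-(u_i-u_0)\cdot u_0$ rather than $c_i/2$, which is still in $C$.)

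What your route buys: an explicit value $\eps=|v-w|$, no need for the sequence $u_n$ or for Noetherianity, and, as you note, it works for any subfield of $\mathbb{R}$, so real closedness is superfluous for this claim.

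What the paper's route buys: it does not depend on a linearization available only for spheres. It goes through for any relation in which the neighbours of $v$ close to $v$ are controlled by algebraic sets defined over $C$ that contain $v$ and shrink to it. That fits the algebraic-set and Artinian-lattice viewpoint the paper develops in Section~\ref{artiniansection}.
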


\begin{proof}
If not, there would be points $u_n\in C^d$ and positive rationals $q_n$ such that $q_n$ is the distance between $u_n$ and $v$, and $\lim_nq_n=0$. Let $S_n$ be the sphere around $u_n$ of radius $q_n$. Then $S_n\subset\mathbb{R}^d$ is an algebraic set. Observe that $\bigcap_nS_n=\{v\}$ and that by the Hilbert Basis Theorem there is a number $m\in\gw$ such that $\bigcap_{n\in m}S_n=\{v\}$. This means that the point $v\in\mathbb{R}^d$ is algebraic over the set $\{u_n\colon n\in m\}\subset C^d$ and so is in $C^d$ by the initial assumption on $C$. This contradicts the initial assumption on $v$.
\end{proof}

\noindent Now, let $x$ be any set of ordinals and $p\in\vd_x$ be a neighborhood coloring with $\dom(p)=\mathbb{R}^d$.

\begin{itemize}
\item the set $O_p=\{q\in P_d\colon q\leq p\}$ is regular open and  balanced over $x$;
\item every regular open subset of $P_d$ is of this form.
\end{itemize}

For the first item, suppose that $y_0\forkindep[x]y_1$ and $q_0\in\vd_{xy_0}$ and $q_1\in\vd_{xy_1}$ are conditions stronger than $p$; I must find a common lower bound of $q_0, q_1$. Let $C\subset\mathbb{R}$ be a countable real closed field containing both $\supp(q_0)$ and $\supp(q_1)$ as a subset. Let $c$ be a neighborhood assignment on $C^d$. Let $r$ be a function defined on $C^d$ as follows: $q_0\cup q_1\subseteq r$, and for each $v\in\dom(r)\setminus \dom(q_0\cup q_1)$ let $r(v)$ be some basic open neighborhood of $v$ which is a subset of $c(v)$ and contains no points in $\dom(q_0)\cup\dom(q_1)$ which are $\Gamma_d$-connected to $v$. An important point is that such a neighborhood exists by the claim.

The first item will be proved if I show that $r\leq q_0, q_1$ holds. This is done by inspection of several cases. I will only show that if $u_0\in\dom(q_0)\setminus\vd_x$ and $u_1\in\dom(q_1)\setminus\vd_x$ are $\Gamma_d$-connected, then $u_0\notin q_1(u_1)$ and $u_1\notin q_0(u_0)$--the only point where the independence relation on $y_0, y_1$ is used. Suppose towards a contradiction that e.g. $u_0\in q_1(u_1)$ holds. Write $\gd$ for the Euclidean distance between $u_0, u_1$, and $S$ for the sphere around $u_1$ of radius $\gd$. Since $u_0\in S$, the independence assumption on $y_0, y_1$ shows that there is an algebraic set $T$ in $\vd_x$ such that $u_0\in T\subseteq S$ (see Proposition~\ref{alg} for a proof). Then $q_1(u_1)$ must be disjoint from $T$ as $q_1\leq p$ holds; in particular, $u_0\notin q_1(u_1)$ as desired.

For the second item, let $O\subseteq P_d$ be a nonempty regular open subset in $\vd_x$. It will be enough to find a neighborhod coloring $p\in\vd_x$ with domain $\mathbb{R}^d\cap\vd_x$ such that $O_p\cap O\neq 0$. To this end, use the extension property of $\forkindep$ to find sets $y_0, y_1$ of ordinals and conditions $q_0, q_1\in O$ such that $\mathbb{R}^d\cap\vd_x\subseteq\dom(q_0)$, $\mathbb{R}^d\cap\vd_x\subseteq\dom(q_1)$, $q_0\in\vd_{xy_0}$, $q_1\in\vd_{xy_1}$, and $y_0\forkindep[x]y_1$. The two conditions are compatible by the balance assumption on $O$, so the sets $q_0\restriction\vd_x$ and $q_1\restriction\vd_x$ must be the same. By Fact~\ref{fact1}(3), their common value $p$ belongs to $\vd_x$, and it is a neighborhood coloring on $\mathbb{R}^d\cap\vd_x$. It also must be the case that $q_0\leq p$, since otherwise the conditions $q_0, q_1$ would fail to be compatible; similarly, $q_1\leq p$. Thus, the conditions $q_0, q_1$ witness the fact that $O_p\cap O\neq 0$ as required.

Now, (2) of the proposition follows. For (1), suppose that $p\in P_d$ is a condition and $x$ is a set of ordinals such that $p\in\vd_x$; I must find a neighborhood coloring $q\in\vd_x$ such that $q\leq p$ and $\dom(q)=\mathbb{R}^d\cap\vd_x$. To this end, first find an arbitrary neighborhood coloring $r\in\vd_x$ with $\dom(r)=\mathbb{R}^d$. Such a coloring exists as a consequence of AC in $\hvd_x$ by a result of \cite{z:twographgames}. Now, note that $\dom(p)\subseteq\mathbb{R}^d\cap\vd_x$ by Fact~\ref{fact1}(3). For every point $v\in(\mathbb{R}^d\cap\vd_x)\setminus\dom(p)$ let $o(v)$ be the first basic open neighborhood of $v$ in some fixed enumeration such that $o(v)$ contains no points in $\dom(p)$ which are $\Gamma_d$-related to $v$. Such a neighborhood exists by the claim. Finally, let $q$ be the map defined by $q(v)=p(v)$ if $v\in\dom(p)$ and $q(v)=r(v)\cap o(v)$ for $v\in (\mathbb{R}^d\cap\vd_x)\setminus\dom(p)$. An inspection of the definitions shows that $q\leq p$ has the required properties.
\end{proof}

\section{Independence relations}
\label{independencesection}

Many consistency proofs in \cite{z:geometric, z:vitali, z:triangles, z:distance, z:ngraphs} can be re-organized so as to use variations of the master independence relation $\forkindep$. These variations are interesting in their own right; this section treats them in their generality.

\begin{definition}
Let $d\geq 2$ be a number. A class $d+1$-ary relation $\perp$ on sets of ordinals is an \emph{independence relation} if it satisfies the following properties, for every set $x$ of ordinals such that $\perp$ is definable from $x$.

\begin{enumerate}
\item (nontriviality) if $\bar z$ is the $d$-tuple all of whose entries are equal to $x$ then $\perp_x\bar z$;
\item (monotonicity) suppose that $i\in d$ is an index, $\bar y$ and $\bar z$ are $d$-tuples which agree on all entries except possibly at $i$, and $\perp_x\bar y$ holds. If $\bar z(i)\in\vd_{x\bar y(i)}$ then $\perp_x\bar z$ holds;
\item (transitivity) suppose that $i\in d$ is an index, $\bar y$ and $\bar z$ are $d$-tuples which agree on all entries except possibly at $i$, and $\perp_x\bar y$ holds. If $\bar z(i)\forkindep[x\bar y(i)]\bar y\restriction d\setminus\{i\}$ then $\perp_x\bar z$.
\end{enumerate}

\noindent In addition, the relation may or may not have the following features:

\begin{enumerate}
\item[(4)] (symmetry) if $\bar y$ and $\bar z$ are $d$-tuples which are permutations of each other, then $\perp_x\bar y$ is equivalent to $\perp_x\bar z$;
\item[(5)] (strong transitivity) suppose that $i\in d$ is an index, $\bar y$ and $\bar z$ are $d$-tuples which agree on all entries except possibly at $i$, and $\perp_x\bar y$ holds. If $\perp_{x\bar y(i)}\bar z$ then $\perp_x\bar z$.
\end{enumerate}

\noindent In case of a $2+1$-ary relation, the notation will be $y_0\perp_xy_1$.
\end{definition}

\noindent In the common case of $2+1$-ary symmetric relations, transitivity implies that $y_0\forkindep[x]y_1$ implies $y_0\perp_xy_1$. On the other hand, if $y_0\forkindep[x]y_1$ implies $y_0\perp_xy_1$, then strong transitivity implies transitivity. These observations are left to the kind reader.

The following is the requisite strengthening of the notion of balance.

\begin{definition}
Let $P$ be a poset and $\perp$ be a $d+1$-ary independence relation.

\begin{enumerate}
\item if $x$ is a set of ordinals such that $\perp$ is definable from $x$ and $O\subset P$ is a nonempty open set in $\vd_x$, say that $O$ is \emph{$\perp$-balanced} over $x$ if for every $d$-tuple $\bar y$ of sets of ordinals such that $\perp_x\bar y$ and any $d$-tuple $\bar p$ of conditions such that $\forall i\in d\ \bar p(i)\in O\cap\vd_{x\bar y(i)}$, the tuple $\bar p$ has a common lower bound in $P$;
\item $P$ is \emph{$\perp$-balanced} if for every set $x$ of ordinals such that $P\in\vd_x$ and $\perp$ is definable from $x$, for every condition $p\in P\cap\vd_x$ there is a set $O\subset P$ $\perp$-balanced over $x$ consisting of conditions stronger than $p$;
\item $P$ is \emph{cofinally $\perp$-balanced} if for every condition $p\in P$ and every set $x$ of ordinals, there is a set $y$ of ordinals such that $P, p\in\vd_y$ and $\perp$ is definable from $y$, and there is a set $O\subset P$ $\perp$-balanced over $y$ such that $O$ consists of conditions stronger than $p$.
\end{enumerate}
\end{definition}

\noindent Thus, the larger the independence relation, the smaller the class of its (cofinally) balanced posets. In all natural cases, either all sets balanced over $x$ are $\perp$-balanced, or none of them are. The most useful feature of $\perp$-balance is the following proposition, which is the main reason for insisting on the transitivity property for weak independence relations.

\begin{proposition}
\label{extensionproposition2}
Let $\perp$ be a $d+1$-ary independence relation. Let $\langle P, \leq\rangle$ be a poset and $x$ a set of ordinals such that $\langle P, \leq\rangle\in\vd_x$. Let $O\subset P$ be a nonempty open set in $\vd_x$. The following are equivalent:

\begin{enumerate}
\item $O$ is $\perp$-balanced over $x$;
\item whenever $\bar y$ is a $d$-tuple such that $\perp_x\bar y$ holds, and $\bar O'$ is a $d$-tuple of nonempty open subsets of $O$ such that $\forall i\in d\ \bar O'(i)\in\vd_{x\bar y(i)}$, then $\bigcap_{i\in d}\bar O'(i)\neq 0$.
\end{enumerate}
\end{proposition}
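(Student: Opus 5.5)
The plan is to imitate the proof of Proposition~\ref{extensionproposition1}, replacing the binary independence $y_0\forkindep[x]y_1$ by $\perp_x\bar y$. The transitivity clause (3) of the definition of an independence relation takes over the role of transitivity of $\forkindep$.

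The implication (2)$\to$(1) is immediate. Given $\bar y$ with $\perp_x\bar y$ and conditions $\bar p(i)\in O\cap\vd_{x\bar y(i)}$, let $\bar O'(i)=\{q\in P\colon q\leq\bar p(i)\}$. These are nonempty open subsets of $O$, since $O$ is open, and each lies in $\vd_{x\bar y(i)}$. By (2) they have a common element, which is a common lower bound of the tuple $\bar p$.

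For (1)$\to$(2), suppose that $\bar y$ and $\bar O'$ are as in (2). I would modify the tuple $\bar y$ one coordinate at a time, for $i=0,1,\dots,d-1$, keeping $\perp_x$ true at each stage. Suppose that at stage $i$ we have a tuple $\bar y_i$ with $\perp_x\bar y_i$, where $\bar y_i(j)=\bar y(j)$ for $j\geq i$. For $j<i$, the entry $\bar y_i(j)$ is a set of ordinals such that $\vd_{x\bar y_i(j)}$ contains $\bar y(j)$ and some condition $p_j\in\bar O'(j)$.

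At stage $i$, apply the extension property of $\forkindep$ over $x\bar y(i)$. Since $\bar O'(i)\in\vd_{x\bar y(i)}$ is nonempty, this yields a set $z_i$ of ordinals with $z_i\forkindep[x\bar y(i)]\bar y_i\restriction d\setminus\{i\}$ and a condition $p_i\in\bar O'(i)\cap\vd_{x\bar y(i)z_i}$. Let $w_i$ be a set of ordinals coding the pair $\bar y(i),z_i$, so that $\vd_{x\bar y(i)z_i}=\vd_{xw_i}$. We need $w_i\forkindep[x\bar y(i)]\bar y_i\restriction d\setminus\{i\}$. This should follow from $z_i\forkindep[x\bar y(i)]\cdots$ together with the basic properties of $\forkindep$ recorded in the appendix: adding the base to the left side does not affect independence. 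If needed, monotonicity of $\forkindep$ with respect to $\vd$-equivalent sets handles this.

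Now let $\bar y_{i+1}$ agree with $\bar y_i$ except that $\bar y_{i+1}(i)=w_i$. Transitivity (3), applied at the index $i$ with $\bar y_i(i)=\bar y(i)$, gives $\perp_x\bar y_{i+1}$. After $d$ stages we have $\perp_x\bar y_d$ and conditions $p_i\in\bar O'(i)\subseteq O$ with $p_i\in\vd_{x\bar y_d(i)}$ for every $i\in d$. By (1) the tuple $\langle p_i\colon i\in d\rangle$ has a common lower bound $q$. Each $\bar O'(i)$ is open, so $q\in\bigcap_{i\in d}\bar O'(i)$, which is therefore nonempty.

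The main obstacle is the bookkeeping in the extension step. The new coordinate must be independent, over $x\bar y(i)$, of the \emph{current} tuple, which includes the already modified coordinates $w_j$ for $j<i$; that is why the extension is taken relative to $\bar y_i$ rather than the original $\bar y$. The other delicate point is the coding of $\bar y(i)z_i$ as a single set of ordinals, together with the fact that $\forkindep$ is insensitive to adjoining the base. The rest is a direct transcription of Proposition~\ref{extensionproposition1}.
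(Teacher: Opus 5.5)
Your proposal is correct and matches the paper's proof essentially step for step: the same coordinate-by-coordinate recursion, the extension property applied over $x\bar y(i)$ against the current, already modified tuple, transitivity clause (3) to preserve $\perp_x$ at each stage, and balance applied at the end. The coding issue you flag is resolved exactly as you suggest, by monotonicity of $\forkindep$: the code $w_i$ of the pair $\bar y(i), z_i$ lies in $\vd_{x\bar y(i)z_i}$, a point the paper leaves implicit.
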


\begin{proof}
The implication (2)$\to$(1) is immediate by considering the open sets $\bar O'(i)=\{q\in P\colon q\leq\bar p(i)\}$ for each $i\in d$. For (1)$\to$(2), suppose that $O$ is $\perp$-balanced over $x$ and $\bar y, \bar O'$ are as in (2). By recursion on $i\leq d$ build $d$-tuples $\bar z_i$ so that 

\begin{itemize}
\item $\bar z_0=\bar y$, for every positive $i\leq d$ $\perp_x\bar z_i$ holds, and $\bar z_{i-1}(j)\in\vd_{x\bar z_{i}(j)}$ holds for every $j\in d$;
\item for each positive $i\leq d$, there is an element $\bar p(i-1)\in \bar O'\cap\vd_{x\bar z_i(i-1)}$.
\end{itemize}

\noindent To perform the recursion step, suppose that $i\in d$ and $\bar z_i$ has been obtained. Use the extension property of $\forkindep$ to find a set $w$ of ordinals such that $w\forkindep[x\bar z_i(i)](z_i\restriction d\setminus\{i\})$ and there is an element $\bar p(i)\in \bar O'(i)\cap \vd_{x\bar z_i(i)w}$. Obtain $\bar z_{i+1}$ from $\bar z_i$ by replacing $\bar z_i(i)$ with $\bar z_i(i)w$. The transitivity implies that$\perp_x\bar z_{i+1}$ holds as desired.

In the end, by $\perp_x\bar z_d$ and the balance assumption on $O$, the conditions $\bar p(i)$ for $i\in d$ have a common lower bound, which is the desired element of $\bigcap_{i\in d}\bar O'(i)$.
\end{proof}

Below, I go through a list of basic independence relations. Each section contains the following subsections: (1) definition of the $\perp$ relation, verification that it indeed is a weak independence relation, with basic examples of cofinally $\perp$-balanced forcings; (2) production of some independent generic extensions; and (3) basic preservation theorems for the class of cofinall $\perp$-balanced forcings, finally yielding some independence results. The whole scheme is somewhat parallel to variations of proper forcing in its flexibility.

\section{Placidity}
\label{0section}

\noindent The most permissive among useful weak independence relation is associated with a class of placid partial orders as isolated in essence in \cite[Section 9.3]{z:geometric}.

\subsection{$\perp^0$ definition}

\begin{definition}
$\perp^0$ is the relation defined by $y_0\perp^0_xy_1$ if $\hvd_{xy_0}\cap\hvd_{xy_1}=\hvd_x$.
\end{definition}

\begin{proposition}
$\perp^0$ is an independence relation with symmetry and strong transitivity.
\end{proposition}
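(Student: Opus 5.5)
We check the five clauses of the definition in turn for $\perp^0$ with $d=2$. Throughout, $x$ is a set of ordinals and we use that $\perp^0$ is definable without parameters. The basic facts about $\hvd$ from the appendix will be used freely. If $z\in\vd_{xy}$ is a set of ordinals, then $z\in\hvd_{xy}$. Moreover, $\hvd_{xz}\subseteq\hvd_{xy}$ whenever $z\in\hvd_{xy}$. Finally, $\hvd_x\subseteq\hvd_{xy}$ always.

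\emph{Nontriviality and symmetry.} Symmetry is immediate, since the defining condition $\hvd_{xy_0}\cap\hvd_{xy_1}=\hvd_x$ is symmetric in $y_0,y_1$. For nontriviality, take $y_0=y_1=x$. Then $\hvd_{xx}=\hvd_x$, so the intersection is $\hvd_x$ and $x\perp^0_x x$ holds.

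\emph{Monotonicity.} By symmetry it suffices to treat $i=0$. Suppose $y_0\perp^0_xy_1$ and $z_0\in\vd_{xy_0}$. Then $\hvd_{xz_0}\subseteq\hvd_{xy_0}$, so
\[\hvd_x\subseteq\hvd_{xz_0}\cap\hvd_{xy_1}\subseteq\hvd_{xy_0}\cap\hvd_{xy_1}=\hvd_x,\]
and therefore $z_0\perp^0_xy_1$.

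\emph{Strong transitivity.} Again take $i=0$. Assume $y_0\perp^0_xy_1$ and $z_0\perp^0_{xy_0}y_1$. The second hypothesis says $\hvd_{xy_0z_0}\cap\hvd_{xy_0y_1}=\hvd_{xy_0}$. Let $a\in\hvd_{xz_0}\cap\hvd_{xy_1}$. Then $a\in\hvd_{xy_0z_0}$ and $a\in\hvd_{xy_0y_1}$, because both of these models contain the corresponding smaller ones. Hence $a\in\hvd_{xy_0}$ by the second hypothesis. Combined with $a\in\hvd_{xy_1}$, the first hypothesis gives $a\in\hvd_x$. So $z_0\perp^0_xy_1$.

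There is one subtlety to watch: should the conclusion be stated with $z_0$ in place of $y_0$, or with the concatenation $y_0z_0$ in place of $y_0$? The transitivity clause in the definition is phrased with $\bar z(i)$ replacing $\bar y(i)$. However, its use in Proposition~\ref{extensionproposition2} replaces $\bar y(i)$ by $\bar y(i)w$. I will therefore also run the same argument with $a\in\hvd_{xy_0z_0}\cap\hvd_{xy_1}$. Such an $a$ lies in $\hvd_{xy_0z_0}\cap\hvd_{xy_0y_1}=\hvd_{xy_0}$, and hence in $\hvd_x$ as before. This proves both readings.

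\emph{Transitivity.} This is deduced from strong transitivity. By the remark after the definition of independence relations, it suffices to show that $z\forkindep[w]y$ implies $z\perp^0_wy$. This follows from Fact~\ref{fact1}(3), which says $\vd_{wz}\cap\vd_{wy}=\vd_w$ under $z\forkindep[w]y$.

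The main obstacle is passing from this statement about $\vd$ to the corresponding statement about $\hvd$. I plan to handle it as follows. Let $a\in\hvd_{wz}\cap\hvd_{wy}$; then $a\in\vd_w$ by Fact~\ref{fact1}(3). Every element of the transitive closure of $a$ also lies in both $\hvd$'s, since these classes are transitive, and so each such element is in $\vd_w$ as well. Hence $a\in\hvd_w$. Applying strong transitivity with $x\bar y(i)$ as the base then yields transitivity.
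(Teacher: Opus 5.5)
Your proof is correct and follows essentially the same route as the paper. Symmetry and monotonicity are immediate. Strong transitivity is proved by the same element chase through $\hvd_{xy_0}$. Transitivity then follows from the remark after the definition, using that $\forkindep$ implies $\perp^0$ by Fact~\ref{fact1}(3). You also supply details the paper skips: the explicit nontriviality check, and the transitive-closure argument that moves Fact~\ref{fact1}(3) from $\vd_w$ to $\hvd_w$.
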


\begin{proof}
Monotonicity and symmetry are obvious. If $y_0\forkindep[x]y_1$ then $y_0\perp_xy_1$ holds by Fact~\ref{fact1}(3). Thus, it is only necessary to verify strong symmetry. Suppose that $y_0\perp_x y_1$ and $z\perp_{xy_0}y_1$ holds; I must show that $z\perp_xy_1$ follows. To this end, let $A\in\hvd_{xz}\cap\hvd_{xy_1}$ be any set. By the latter assumption, $A\in\hvd_{xy_0}$ holds, and by the former assumption, $A\in\hvd_x$ follows as desired.
\end{proof}

\begin{example}
\label{hamelexample}
(adding a Hamel basis to $\mathbb{R}$)
Let $P$ be the partial ordering of countable subsets of $\mathbb{R}$ linearly independent over $\mathbb{Q}$, ordered by reverse inclusion. Then $P$ is $\perp^0$-balanced.
\end{example}

\begin{proof}
By inspection of proof of Example~\ref{linearexample}. There, the only feature of $y_0\forkindep[x]y_1$ which is used in the case of the vector space $\mathbb{R}$ over $\mathbb{Q}$ is that $\mathbb{R}\cap\vd_{xy_0}\cap\vd_{xy_1}=\mathbb{R}\cap\vd_x$.
\end{proof}

\subsection{$\perp^0$-independent extensions}

\noindent The most interesting way of generating pairs of $\perp^0$-independent extensions comes from Hjorth's concept of turbulent actions of Polish groups.

\begin{definition}
Let $G$ be a Polish group acting continuously on a Polish space $Y$.

\begin{enumerate}
\item For open sets $U\subset G$ containing the unit and $O\subset Y$, write $E_{UO}$ for the smallest equivalence relation on $O$ containing all pairs $\langle y, gy\rangle$ such that $y\in O$, $g\in U$, and $gy\in O$;
\item the action is \emph{turbulent} if for all nonempty sets $U\subset G$ containing the unit and $O\subset Y$, the relation $E_{UO}$ contains a somewhere dense equivalence class.
\end{enumerate}
\end{definition}

\begin{example}
\label{c0example}
Let $G$ be the separable Banach space of all sequences in $\mathbb{R}^\gw$ converging to zero, with the topology induced by the maximum norm. It acts on $\mathbb{R}^\gw$ by coordinatewise addition. The action is easily check to be turbulent.
\end{example}

\begin{proposition}
\label{0prop}
Let $G\acts Y$ be a turbulent action. Let $x$ be a set of ordinals such that $\vd_x$ contains $G, Y$, and the action. Let $g\in G$ and $y\in Y$ be points mutually Cohen-generic over $x$. Then $y\perp^0_xg\cdot y$ holds.
\end{proposition}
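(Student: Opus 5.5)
The plan is to show directly that every set of ordinals in $\hvd_{xy}\cap\hvd_{x,g\cdot y}$ already belongs to $\hvd_x$. Since $\hvd$-classes are determined by the sets of ordinals they contain, this gives $\hvd_{xy}\cap\hvd_{x,g\cdot y}=\hvd_x$, which is $y\perp^0_xg\cdot y$.

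First I reduce to a single ordinal. Let $a$ be a set of ordinals in the intersection. Then there are formulas $\phi,\psi$ with parameters in $\vd_x$ (ordinals together with $x$) such that for every ordinal $\ga$,
\[
\ga\in a\iff\phi(\ga,y)\iff\psi(\ga,g\cdot y).
\]
Since $g,y$ are mutually Cohen-generic over $x$, the pair $\langle g,y\rangle$ is Cohen-generic in $G\times Y$ over $x$, by the appendix parlance on Cohen-generic points. So the displayed equivalence is forced by some basic open condition $U_0\times O$ containing $\langle g,y\rangle$. This means that for all $\langle g',y'\rangle\in U_0\times O$ Cohen-generic over $x$ (equivalently, off a meager set coded in $\vd_x$), we have $\phi(\ga,y')\leftrightarrow\psi(\ga,g'\cdot y')$. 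I would use the Baire property axiom to see that for each $\ga$ the set $B_\ga=\{y'\in O\colon\phi(\ga,y')\}$ has the Baire property, with everything definable from $x$ and $\ga$.

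The key step, which I expect to be the main obstacle, is the local invariance argument. The goal is to show that $B_\ga$ is either meager or comeager in $O$, after possibly shrinking $O$ and then fixing a suitable neighborhood $U$ of the unit with $U^{-1}U\subseteq U_0g^{-1}$ or similar. Suppose $y_1,y_2\in O$ are generic and related by a single step $y_2=hy_1$ with $h\in U$ generic over $xy_1$. Then I want to choose $g'$ close to $g$ with both $g'\cdot y_1$ and $g'h^{-1}\cdot y_2$ computed consistently. From this I get $\phi(\ga,y_1)\leftrightarrow\psi(\ga,g'y_1)\leftrightarrow\phi(\ga,y_2)$, where the second equivalence uses the forcing condition applied to $\langle g'h^{-1},y_2\rangle$. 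Arranging the genericity of all these pairs is the delicate part. I would try first to take $g'$ Cohen-generic over $x,y_1,h$ in a small neighborhood and use the Kuratowski--Ulam theorem to pass to comeager sets. From this I conclude that $B_\ga$ is invariant, modulo meager sets, under $E_{UO}$. By turbulence, $E_{UO}$ has a somewhere dense class, and by the standard Hjorth argument this forces the Baire-property set $B_\ga$ to be meager or comeager in a nonempty open subset $O_\ga\subseteq O$ containing $y$; shrinking $O$ to a basic set containing $y$ at the start handles this uniformly.

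Once $B_\ga$ is known to be meager or comeager in $O$, the conclusion is straightforward. Whether $B_\ga$ is comeager in $O$ is a statement about $\ga$, $x$ and the basic open set $O$, hence decidable in $\vd_x$. By the genericity of $y$ over $x$, $\ga\in a$ holds if and only if $B_\ga$ is comeager in $O$. Hence $a$ is definable from $x$, $O$ and ordinals, so $a\in\hvd_x$, as required.
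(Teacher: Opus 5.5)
Your overall strategy is the paper's. You use the Baire property to get a basic open $U_0\times O$ on which $F_0(x,y')=F_1(x,g'y')$ holds generically. You then derive generic invariance of the sets $B_\ga$ along $E_{UO}$-chains, and use a dense $E_{UO}$-class together with a chain argument to get a meager/comeager dichotomy.

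Your single-step device with an auxiliary $g'$ is sound, and it replaces the paper's normalization $1\in U$ and its two-step chains. Let $\langle y_1,h,g'\rangle$ be Cohen-generic over $x$ in a product of suitable basic open sets. The map $\langle y_1,h,g'\rangle\mapsto\langle g'h^{-1},hy_1,h\rangle$ is a homeomorphism in $\vd_x$, so Proposition~\ref{mutualproposition} makes $\langle g'h^{-1},y_2\rangle$ generic. No Kuratowski--Ulam argument is needed.

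The reduction to sets of ordinals is also fine, but for an intersection of two $\hvd$ classes it needs a word. An $\in$-minimal element of $(\hvd_{xy}\cap\hvd_{x,g\cdot y})\setminus\hvd_x$ is a subset of $\hvd_x\cap V_\beta$ for some $\beta$. It can therefore be coded as a set of ordinals via a bijection lying in $\hvd_x$.

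\textbf{The gap: localization at $y$.} Turbulence, as defined in the paper, only gives an $E_{UO}$-class dense in \emph{some} nonempty open $O'\subseteq O$. Nothing puts $y$ into $O'$, and shrinking $O$ around $y$ just reproduces the problem inside the smaller set.
\begin{itemize}
\item The chain argument gives the dichotomy for $B_\ga$ only on such an $O'$. Nothing you wrote rules out $B_\ga$ being comeager in one part of $O$ and meager in another.
\item Genericity of $y$ yields $\ga\in a\iff B_\ga$ is comeager in $O'$ only when $y\in O'$.
\item The region must also be chosen independently of $\ga$, which your notation $O_\ga$ obscures.
\end{itemize}
So your final step does not follow as written.

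\textbf{How to close it by genericity.} Fix $U$ and the original $O$. Let $N$ be the set of $z\in O$ for which there are no basic open sets $O'\subseteq O''\subseteq O$ with $z\in O'$ and some $E_{UO''}$-class dense in $O'$.
\begin{itemize}
\item $N\in\vd_x$.
\item Turbulence applied to each basic $O''\subseteq O$ gives a basic $O'\subseteq O''$ disjoint from $N$. Hence $N$ is nowhere dense, and the Cohen-generic $y$ avoids it.
\item Run your chain argument inside such an $O''$ with $O'\ni y$. This gives, uniformly in $\ga$, that $B_\ga$ is meager or comeager in $O'$.
\item Hence $a=\{\ga\colon B_\ga$ is comeager in $O'\}\in\hvd_x$.
\end{itemize}

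The paper avoids the issue by arguing by contradiction. It builds ``$F_0(x,z)$ is a subset but not an element of $\hvd_x$'' into the comeager set of pairs. A contradiction is then reached at a generic $z$ in \emph{any} region where some class is dense, namely $F_0(x,z)=\{a\in\hvd_x\colon C_a$ is comeager in $O'\}\in\hvd_x$. So $y$ never has to lie in that region.
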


\begin{proof}
Let $x$ be a set of ordinals such that $G, Y$, and the action belongs to $\vd_x$. The following claim is proved by a routine repeated use of Proposition~\ref{mutualproposition}.

\begin{claim}
Let $n\geq 1$ be a natural number, and $z\in Y$ and $\bar h\in G^n$ be points mutually Cohen-generic over $x$. Then $\prod(\bar h\restriction n-1)\cdot z\in Y$ and $\bar h(n-1)\in G$ are mutually Cohen-generic over $x$.
\end{claim}
 
Now, suppose that $g\in G$ and $y\in Y$ are points mutually Cohen-generic over $x$. Suppose towards a contradiction that the conclusion of the proposition fails, so $\hvd_{xy_0}\cap\hvd_{xy_1}\neq\hvd_x$. An $\in$-minimal set witnessing the inequality belongs to both $\hvd_{xy_0}$ and $\hvd_{xy_1}$ and it is a subset (but not an element) of $\hvd_x$. Let $F_0, F_1$ be $\vd$-functions such that $F_0(x, y_0)=F_1(x, y_1)$ is such a set. Let $B\subseteq G\times Y$ be the $\vd_x$ set of all pairs $\langle h, z\rangle$ such that $F_0(x, z)=F_1(x, hz)$ is a subset, but not an element, of $\hvd_x$. The pair $\langle g, y\rangle$ belongs to $B$, so $B$ is non-meager. Since all sets have the Baire property, there must be nonempty open sets $U\subset G$ and $O\subset Y$ such that $B$ is comeager in $U\times O$. Shifting the set $U$ by an element of $G\cap\hvd_x$ if necessary, I can reduce the proof to the case where $1\in U$. Shrinking $U$ if necessary, assume that $U=U^{-1}$.

Use the turbulence assumption to find a basic open set $O'\subset O$ in which an $E_{UO}$-class is dense. There must be a set $a\in\hvd_x$ such that the set $C_a=\{y\in O'\colon a\in F_0(x, y)\}$ is neither meager nor comeager in $O'$. If this were not the case,  write $b=\{a\in\hvd_x\colon C_a$ is comeager in $O'\}$, find a pair $\langle h, z\rangle\in U\times G'$ Cohen-generic over $x$, and conclude that $F_0(x, z)=b\in\vd_x$. This would contradict the choice of the sets $U$ and $O$.

Pick a set $a$ as in the previous paragraph. Note that the set $B_a$ has the Baire property. By the definition of the equivalence relation $E_{UO}$, there must be a finite sequence $\langle y_i\colon i\leq n, h_i\colon <n\rangle$ such that $n$ is even, $B_a$ is co-meager in some neighborhood of $y_0$, it is meager in some neighborhood of $y_n$, for each $i<n$ $y_i\in O$, $h_i\in U$, and $h_iy_i=y_{i+1}$. By the continuity of the action, there must be open sets $O_i\subseteq O'$ for $i\leq n$, $U_i\subseteq U$ for $i<n$, such that $B_a$ is co-meager in $O_0$, meager in $O_n$, and for all $i\in n$ $U_iO_i\subset O_{i+1}$ holds.

Now, let $z_0\in O_0$ and $g_i\in U_i$ be points mutually Cohen-generic over $x$, and by recursion on $i$ define $z_{i+1}=g_iz_i$. Observe that $z_i\in O_i$ and $g_i\in U$ are points mutually Cohen-generic over $x$ by the claim. By the initial choice of the sets $U$ and $O$, the pairs $\langle g_i, z_i\rangle$ and $\langle g_i^{-1}, z_i\rangle$ are both elements of the set $B$. 

Now, by the definitions, $a\in F_0(x, z_0)$ and $a\notin F_0(x, z_n)$, so there must be an even number $i\in n$ such that $a\in F_0(x, z_i)\setminus F_0(x, z_{i+2)}$. However, $F_0(x, z_i)=F_1(x, g_iz_i)=F_1(x, g_{i+1}^{-1}z_{i+2})=F_0(x, z_{i+2})$ must hold by the definition of the set $B$. A contradiction.
\end{proof}

\subsection{$\perp^0$ consistency results}

\begin{theorem}
Let $G\acts Y$ be a generically turbulent continuous Polish group action. In every cofinally $\perp^0$-balanced extension, every $G$-invariant subset of $Y$ is either meager or co-meager.
\end{theorem}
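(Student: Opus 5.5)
The plan is to argue by contradiction in the usual balanced-forcing style, using Proposition~\ref{0prop} to produce $\perp^0$-independent pairs $y$, $g\cdot y$ and the $\perp^0$-balance to amalgamate conditions placed in their models.

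\textbf{Setup.} Let $P$ be cofinally $\perp^0$-balanced, $p\in P$, and $\tau$ a $P$-name such that $p\Vdash$ ``$\tau\subseteq Y$ is $G$-invariant''. Since all sets have the Baire property in the ground model, and the extension adds no new reals or sets of ordinals (by the proposition on cofinally balanced forcing, whose proof uses only the balance of $O$ and independent pairs, both available here since $\forkindep$ implies $\perp^0$), the Baire property should transfer. More simply, I would show directly that some condition forces $\tau$ to be meager or comeager. Using cofinal $\perp^0$-balance, fix a set $x$ of ordinals with $G,Y$, the action, $P,p,\tau\in\vd_x$, and an open set $O$ of conditions below $p$ that is $\perp^0$-balanced over $x$. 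By Corollary~\ref{opencorollary}(3) (whose proof goes through for $\perp^0$-balance via Proposition~\ref{extensionproposition2}), all conditions in $O$ decide $\check z\in\tau$ in the same way for every $z\in\vd_x$. The real work concerns points not in $\vd_x$.

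\textbf{Main step.} Consider the set $D=\{z\in Y\colon \exists q\in O\cap\vd_{xz}\ q\Vdash\check z\in\tau\}$, which is in $\vd_x$. By the Baire property, $D$ is either comeager in some basic open set $W$, or meager. Suppose $D$ is nonmeager, and let $D'$ be the analogous set with $\check z\notin\tau$. Since every $z$ Cohen-generic over $x$ has, by extension, conditions in $O\cap\vd_{xz}$ deciding $\check z\in\tau$, a Cohen generic point over $x$ lies in $D\cup D'$. If both $D$ and $D'$ are nonmeager, use generic turbulence (density of orbits, i.e.\ some $g\in G$ moves an open set of $D$ into an open set of $D'$ on a nonmeager set) to find $g\in G$ and $y\in Y$ mutually Cohen-generic over $x$ with $y\in D$ and $g\cdot y\in D'$. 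By Proposition~\ref{0prop}, $y\perp^0_x g\cdot y$. Take $q_0\in O\cap\vd_{xy}$ forcing $\check y\in\tau$ and $q_1\in O\cap\vd_{x\,g\cdot y}$ forcing $g\cdot y\notin\tau$. By $\perp^0$-balance, $q_0,q_1$ have a common lower bound, which forces both statements. This contradicts $G$-invariance, since $g$ is a ground model element. Hence one of $D,D'$ is meager, say $D'$.

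\textbf{Conclusion and obstacle.} Then $O$ forces that every point Cohen-generic over $\vd_x$ lies in $\tau$. Provided the extension preserves that the set of points Cohen-generic over $x$ is comeager (this is the inaccessibility axiom: only countably many dense sets in $\vd_x$, and no new reals), $\tau$ is forced to be comeager; symmetrically in the other case it is meager. Since $p$ was arbitrary, the theorem follows. I expect the main obstacle to be the turbulence step. One must extract from the topological hypothesis that $D$ and $D'$ being both nonmeager yields a mutually generic pair $(g,y)$ with $y\in D$ and $gy\in D'$. A Kuratowski--Ulam computation shows that this is exactly what generic turbulence, i.e.\ dense meager orbits with generic points, provides: the set $\{(g,y)\colon gy\in D'\}$ is nonmeager in $G\times W$. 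A secondary point is to check that Proposition~\ref{0prop} gives $\perp^0$ for the pair $y$, $g\cdot y$ exactly as used.
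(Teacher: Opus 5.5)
Your skeleton matches the paper's proof: category regions for ``$\in\tau$'' and ``$\notin\tau$'', a dense orbit plus continuity to get $U\cdot W_0\subseteq W'$, a mutually generic pair $g,y$, Proposition~\ref{0prop}, and amalgamation inside $O$. The gap is in how you define $D$ and $D'$. You require the witnessing condition to lie in $O\cap\vd_{xz}$. You then assert that every $z$ Cohen-generic over $x$ has a condition in $O\cap\vd_{xz}$ deciding $\check z\in\tau$ ``by extension.''

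The extension property never produces elements of $\vd_{xz}$. It only produces elements of a $\vd_{xz}$-set that are $\forkindep$-independent from given sets. A nonempty $\vd_{xz}$-set can have no element in $\vd_{xz}$ at all; the set of points Cohen-generic over $xz$ is an example. Balanced sets behave exactly this way: the sets $O_u$ of Example~\ref{ufexample1} contain no condition in $\vd_x$, because no infinite set in $\hvd_x$ diagonalizes an ultrafilter of $\hvd_x$. So $D$ and $D'$ may both be empty. Your concluding step (``$D'$ is meager, so $O$ forces every generic point into $\tau$'') is then unsupported. Conditions of $O$ outside $\vd_{xz}$ may force $\check z\notin\tau$ for comeagerly many $z$ without $z$ ever entering $D'$.

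The repair is what the paper does. Drop the definability requirement and set $A_0=\{z\in Y\colon\exists q\in O\ q\Vdash\check z\in\tau\}$, with $A_1$ defined analogously using $\check z\notin\tau$. These sets are still in $\vd_x$, and $A_0\cup A_1=Y$ because $O$ is open. The paper phrases this as a contradiction: if $\tau$ and $Y\setminus\tau$ are both nonmeager, then $A_0$ and $A_1$ are nonmeager, since $O$ forces $\tau\subseteq\check A_0$ and $Y\setminus\tau\subseteq\check A_1$.

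In the main step, replace the single conditions $q_0,q_1$ by two nonempty open subsets of $O$: $\{q\in O\colon q\Vdash\check y\in\tau\}\in\vd_{xy}$ and $\{q\in O\colon q\Vdash g\cdot y\notin\tau\}\in\vd_{x\,g\cdot y}$. Intersect them using Proposition~\ref{extensionproposition2}. That proposition is where the extension property is used legitimately. It enlarges $y$ and $g\cdot y$ by sets that are $\forkindep$-independent over the relevant parameters, so that conditions definable from them exist. The transitivity of $\perp^0$ keeps the enlarged pair independent. With this change, the rest of your argument goes through, including the final passage through the ground-model comeager set of Cohen-generic points and the absence of new reals.
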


\begin{proof}
Let $P$ be a cofinally $\perp^0$-balanced poset. If the conclusion were to fail, there would have to be a condition $p\in P$ and a $P$-name $\tau$ for a subset of $Y$ such that $p\Vdash\tau$ is $G$-invariant and both $\tau$ and $Y\setminus\tau$ are non-meager. Let $x$ be any set of ordinals defining all objects mentioned so far, such that there is a $\perp_0$-balanced set $O\subset P$ consisting only of conditions stronger than $p$. Both sets $A_0=\{y\in Y\colon$ there is a condition $q\in O$ forcing $\check y\in\tau\}$ and $A_1=\{y\in Y\colon$ there is a condition $q\in O$ forcing $\check y\notin\tau\}$ are $\vd_x$ and must be non-meager since any condition in $O$ forces $\tau\subseteq\check A_0$ and $Y\setminus \tau\subseteq\check A_1$. Use the Baire axiom to find basic open sets $O_0$ and $O_1\subset Y$ such that $A_0$ is co-meager in $O_0$ and $A_1$ is comeager in $O_1$. Thinning out the set $O_0$ if necessary, I may find a nonempty open set $U\subset G$ such that $U\cdot O_0\subset O_1$. Let $g\in U$ and $y\in O_0$ be mutually Cohen-generic points over $x$.

Then $y\in A_0$ and $g\cdot y\in A_1$, and $y\perp^0_xg\cdot y$ holds by Proposition~\ref{0prop}. The sets $\{q\in O\colon q\Vdash y\in\tau\}$ and $\{q\in O\colon q\Vdash g\cdot y\notin\tau\}$ are both nonempty open subsets of $O$ in $\vd_{xy}$ and $\vd_{xg\cdot y}$ respectively. By Proposition~\ref{extensionproposition2}, they have a nonempty intersection. Any condition in the intersection forces that $\tau$ is not $G$-invariant, a contradiction.
\end{proof}

\begin{corollary}
\label{linocorollary}
Let $G\acts Y$ be a generically turbulent continuous Polish group action.  The orbit space cannot be linearly ordered in any cofinally $\perp^0$-balanced extension.
\end{corollary}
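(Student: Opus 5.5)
The plan is to derive the corollary from the preceding theorem. The idea is to produce, from a linear ordering of the orbit space, a $G$-invariant subset of $Y$ that is neither meager nor comeager. Work in a cofinally $\perp^0$-balanced extension and suppose that $\le$ is a linear ordering of the orbit space $Y/G$. By the theorem, every $G$-invariant subset of $Y$ is meager or comeager, and in particular every such set has the Baire property. Generic turbulence gives that every orbit is meager; more precisely, I will use that orbits are meager on a comeager invariant set, which is the generic part of the turbulence hypothesis.

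For each $y\in Y$ consider the initial segment $D_y=\{z\in Y\colon [z]\le[y]\}$. This set is $G$-invariant, so it is meager or comeager. Similarly, $D'_y=\{z\in Y\colon [z]<[y]\}$ is $G$-invariant, and $D_y\setminus D'_y=[y]$ is meager. The next step is a cut argument. Let $L$ be the set of $y$ such that $D_y$ is meager and $R$ the set of $y$ such that $D_y$ is comeager. Both $L$ and $R$ are $G$-invariant and together they cover $Y$, so one of them is comeager. For definiteness, suppose $R$ is comeager; the case where $L$ is comeager is symmetric.

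The heart of the argument is a Kuratowski--Ulam computation on the set $S=\{(y,z)\in Y^2\colon [z]\le[y]\}$. For $y$ in the comeager set $R$, the vertical section $S_y=D_y$ is comeager. For the horizontal sections $S^z=\{y\colon [z]\le[y]\}$, which equal $Y\setminus D'_z$ up to $[z]$, take $z\in R$. Then $D_z$ is comeager, and $D'_z$ differs from $D_z$ only by the meager orbit, so $D'_z$ is comeager as well, and hence $S^z$ is meager. Kuratowski--Ulam then yields a contradiction, because $S$ would be both comeager and meager. For this I need $S$ to have the Baire property in $Y^2$. I expect this to be the main obstacle, since the theorem only speaks about subsets of $Y$.

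My first attempt at that obstacle is to apply the theorem to the product action of $G^2$ on $Y^2$. This action is again generically turbulent, because the product of turbulent actions is turbulent; the claim in the proof of Proposition~\ref{0prop} handles the genericity bookkeeping. The set $S$ is $G^2$-invariant, so it is meager or comeager and in particular has the Baire property, and the Kuratowski--Ulam argument above goes through. As a fallback, I would work directly in the ground model. I would take a balanced set $O$ and a name for $\le$, pick $y,z$ mutually generic with suitable group translates, and use Proposition~\ref{extensionproposition2} with Proposition~\ref{0prop} applied to $Y^2$ to force both $[y]<[z]$ and $[z]<[y]$, mirroring the proof of the theorem.
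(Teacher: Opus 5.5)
Your argument is correct, and its decisive step is the same as the paper's. You view the ordering as the $G^2$-invariant set $S\subseteq Y^2$ and apply the theorem to the product action $G^2\acts Y^2$, which the paper also just asserts is turbulent, to get that $S$ is meager or comeager. (Turbulence of the product is a direct topological check on products of local classes. The claim inside the proof of Proposition~\ref{0prop} is not what is needed there.)

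Where you differ is in extracting the contradiction.

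\emph{The paper's finish.} It uses the coordinate swap $\pi$. The sets $S$ and $\pi''S$ have the same category and together cover $Y^2$, so both are comeager. Yet $S\cap\pi''S$ is the orbit equivalence relation $E_G$, which is meager because its sections are meager orbits.

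\emph{Your finish.} You first run a one-dimensional cut argument, applying the theorem also to each $D_y$ and to $L$ and $R$. You then compare the vertical and horizontal sections of $S$ via Kuratowski--Ulam.

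Both routes are valid. The paper's is shorter and needs the dichotomy only for $Y^2$. Once you have the Baire property of $S$, your sets $D_y$, $L$, $R$ are no longer needed. Your route has one small merit: it shows that meagerness of orbits is used only when $R$ is comeager. When $L$ is comeager, the inclusion $D'_z\subseteq D_z$ already makes the horizontal sections comeager.

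Your ground-model fallback is unnecessary; it would amount to reproving the theorem for $Y^2$.
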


\begin{proof}
It is easy to check that the product action $G^2\acts Y^2$ is turbulent. Let $\leq$ be a linear ordering of the orbit space, viewed as a $G^2$-invariant subset of $Y^2$. Let $\pi\colon Y^2\to Y^2$ be the homeomorphism exchanging the two coordinates of the product. Then $\geq=\pi''\leq$, and $\geq\cap\leq$ is exactly the $G$-orbit equivalence relation, which is meager, having all classes meager. A situation of this type cannot occur in a model in which every $G^2$-invariant subset of $Y^2$ is meager or co-meager: both $\leq$ and $\geq$ have the same category, as they cover the whole space they would have to be co-meager, but they intersect in a meager set.
\end{proof}

\begin{corollary}
It is consistent with ZF+DC+there is a Hamel basis for $\mathbb{R}$ over $\mathbb{Q}$ that the quotient space of the $c_0$ equivalence relation on $\cantor$ is not linearly orderable.
\end{corollary}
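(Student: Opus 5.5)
The plan is to work in the $P$-extension of the Solovay model (under the geometric axiomatization), where $P$ is the poset of Example~\ref{hamelexample}: countable $\mathbb{Q}$-linearly independent subsets of $\mathbb{R}$, ordered by reverse inclusion. By Example~\ref{hamelexample} this poset is $\perp^0$-balanced, hence cofinally $\perp^0$-balanced. The extension satisfies ZF+DC, because $P$ is $\sigma$-closed and the ground model satisfies DC. What remains is to check two things: that the generic filter yields a Hamel basis, and that the $c_0$ quotient is not linearly orderable.

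For the Hamel basis, I will take the union $H=\bigcup G$ of the generic filter. It is linearly independent because any finite subset lies in a single condition. To see that it spans $\mathbb{R}$, fix a real $r$ in the ground model; the extension adds no new reals, by the earlier proposition that cofinally balanced forcing adds no new well-ordered sequences of ground model elements. Given a condition $p$, either $r$ already lies in the $\mathbb{Q}$-span of $p$, or $p\cup\{r\}$ is a condition. So a density argument puts $r$ in the span of $H$.

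For the non-orderability, I will apply Corollary~\ref{linocorollary} to the turbulent action from Example~\ref{c0example}. That example gives $c_0$ acting on $\mathbb{R}^\gw$, whereas the statement concerns the $c_0$ equivalence relation on $\cantor$. I will handle the mismatch in one of two ways. The first is to note that the $c_0$ equivalence relation on $\mathbb{R}^\gw$ (sequences whose difference converges to zero) restricted to $\cantor$ has a simple Borel reduction from the $\mathbb{R}^\gw$ version, given by coding reals by binary digit blocks. A linear ordering of the $\cantor$ quotient would then pull back along this injection of quotients to a linear ordering of the $\mathbb{R}^\gw$ quotient. The second, which is cleaner, is to read ``$c_0$ equivalence relation'' as the orbit equivalence of the turbulent action of Example~\ref{c0example}, possibly restricted to a $G$-invariant dense $G_\delta$ that is homeomorphic to the relevant space.

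Either way, any injection of quotients lets a linear order be transferred backwards. Corollary~\ref{linocorollary} then forbids a linear ordering of the orbit space in every cofinally $\perp^0$-balanced extension, and consistency follows.

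I expect the main obstacle to be the identification of the $c_0$ equivalence on $\cantor$ with the orbit equivalence of a generically turbulent action. I would first try to give an explicit Borel map $\mathbb{R}^\gw\to\cantor$, defined in the ground model, that reduces the $\mathbb{R}^\gw$ relation to the $\cantor$ relation, so that the quotient of the former injects into the quotient of the latter. The remaining steps are routine.
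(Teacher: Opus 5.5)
Your skeleton is the paper's. The Hamel basis poset is $\perp^0$-balanced by Example~\ref{hamelexample}, and Corollary~\ref{linocorollary}, applied to the turbulent $c_0$-action of Example~\ref{c0example}, forbids any linear ordering of the orbit space. Your extra checks are correct and only fill in what the paper leaves implicit: DC is preserved by $\sigma$-closure, and the density argument shows $\bigcup G$ spans $\mathbb{R}$ because no reals are added.

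The problem is the step you single out as the main obstacle, and in particular the route you say you would try first. Restricting the $c_0$-relation to $\cantor\subset\mathbb{R}^\gw$ gives exactly $E_0$: two binary sequences whose coordinatewise difference tends to $0$ must eventually agree. There is no Borel reduction of the $c_0$-relation on $\mathbb{R}^\gw$ to $E_0$, by Hjorth's turbulence theorem, since $E_0$ is induced by a countable group action.

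Worse, the conclusion is false for $E_0$ in every model of ZF with a Hamel basis $H$. First, $x\mapsto\sum_n x(n)/n!$ is a continuous reduction of $E_0$ to the Vitali relation on $\mathbb{R}$. Second, write $1=\sum_{i<k}q_ih_i$ with $q_0\neq 0$. The $\mathbb{Q}$-linear map that vanishes on $1$ and is the identity on $H\setminus\{h_0\}$ is well defined on the basis $(H\setminus\{h_0\})\cup\{1\}$ and has kernel exactly $\mathbb{Q}$. So $\mathbb{R}/\mathbb{Q}$, and hence $\cantor/E_0$, injects into $\mathbb{R}$ and is linearly orderable. The explicit map $\mathbb{R}^\gw\to\cantor$ you planned to build would therefore contradict Corollary~\ref{linocorollary} itself.

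Your fallback of restricting to an invariant dense $G_\gd$ set ``homeomorphic to the relevant space'' cannot produce $\cantor$ either. A compact dense subset of $\mathbb{R}^\gw$ would be closed, hence all of $\mathbb{R}^\gw$, which is not compact.

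The only viable reading is the one the paper's proof uses. ``The $c_0$ equivalence relation'' means the orbit relation of Example~\ref{c0example} on $Y=\mathbb{R}^\gw$ itself, and the $\cantor$ in the statement is a slip. Under that reading no identification or reduction step is needed. Drop the first route and the restriction idea, apply Corollary~\ref{linocorollary} directly to $c_0\acts\mathbb{R}^\gw$, and your argument is complete.
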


\begin{proof}
The poset adding a Hamel basis is $\perp^0$-balanced by Example~\ref{hamelexample}. The $c_0$ equivalence relation on $\cantor$ is induced as an orbit equivalence relation of turbulent Polish group action by Example~\ref{c0example}. An application of Corollary~\ref{linocorollary} completes the proof.
\end{proof}

\section{OD-placidity}
\label{1section}

Examples of Section~\ref{0section} produce pairs $y_0\perp^0_xy_1$ such that $y_0, y_1$ come from the same $E$-equivalence class, where $E$ is a turbulent equivalence relation and the class does not belong to $\hvd_x$. In consequence, $\vd_{xy_0}\cap\vd_{xy_1}\neq\vd_x$. This section produces an independence relation which eliminates this feature.

\subsection{$\perp^1$-definition}

\begin{definition}
\label{perp1definition}
$\perp^1$ is the relation defined by $y_0\perp^1_xy_1$ if $\vd_{xy_0}\cap\vd_{xy_1}=\vd_x$.
\end{definition}

\noindent It is immediate that $\perp^1$ is a subset of $\perp^0$.

\begin{proposition}
$\perp^1$ is an independence relation with symmetry and strong transitivity.
\end{proposition}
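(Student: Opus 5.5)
The plan follows the proof of the analogous proposition for $\perp^0$, with $\hvd$ replaced by $\vd$ throughout. The properties to check are nontriviality, monotonicity, symmetry, transitivity and strong transitivity. By the remark after the definition of independence relations, transitivity follows from strong transitivity once we know that $y_0\forkindep[x]y_1$ implies $y_0\perp^1_xy_1$.

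The first three are routine. Nontriviality holds because $\vd_{xx}=\vd_x$. Symmetry is obvious, since the defining condition $\vd_{xy_0}\cap\vd_{xy_1}=\vd_x$ is symmetric in $y_0,y_1$. For monotonicity, suppose $y_0\perp^1_xy_1$ and $z\in\vd_{xy_0}$. Then every set definable from $x$, $z$ and ordinals is definable from $x$, $y_0$ and ordinals, so $\vd_{xz}\subseteq\vd_{xy_0}$. Also $\vd_x\subseteq\vd_{xz}$ trivially. Hence
$$\vd_x\subseteq\vd_{xz}\cap\vd_{xy_1}\subseteq\vd_{xy_0}\cap\vd_{xy_1}=\vd_x.$$

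Next, $y_0\forkindep[x]y_1$ implies $y_0\perp^1_xy_1$ directly from Fact~\ref{fact1}(3), which states $\vd_{xy_0}\cap\vd_{xy_1}=\vd_x$. This is the same fact the paper already used when proving Example~\ref{loexample1} and the preceding proposition.

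The main step is strong transitivity. Suppose $y_0\perp^1_xy_1$ and $z\perp^1_{xy_0}y_1$; we must show $z\perp^1_xy_1$. Let $A\in\vd_{xz}\cap\vd_{xy_1}$. Then $A\in\vd_{xy_0z}\cap\vd_{xy_0y_1}$, because adding the parameter $y_0$ only enlarges $\vd$. The second assumption says $\vd_{xy_0z}\cap\vd_{xy_0y_1}=\vd_{xy_0}$, so $A\in\vd_{xy_0}$. Now $A\in\vd_{xy_0}\cap\vd_{xy_1}=\vd_x$ by the first assumption. The reverse inclusion $\vd_x\subseteq\vd_{xz}\cap\vd_{xy_1}$ is trivial.

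The only point needing care is the reading of $\perp^1_{xy_0}$: the base parameter is the pair $x,y_0$, coded as a single set of ordinals. We need $\vd_{(xy_0)z}=\vd_{xy_0z}$, which holds because any reasonable pairing of sets of ordinals is ordinal-definable in both directions. With that granted, the argument above goes through, and the transitivity property (3) follows as noted at the start.
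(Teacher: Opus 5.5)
Your proposal is correct and follows the paper's proof: the first three properties are immediate, $y_0\forkindep[x]y_1\Rightarrow y_0\perp^1_xy_1$ comes from Fact~\ref{fact1}(3) (so transitivity reduces to strong transitivity), and strong transitivity is the same two-step chain, first through $\vd_{xy_0z}\cap\vd_{xy_0y_1}=\vd_{xy_0}$ and then through $\vd_{xy_0}\cap\vd_{xy_1}=\vd_x$. The only cosmetic difference is that the paper starts from $A\in\vd_{xy_0z}$ and concludes the formally stronger $y_0z\perp^1_xy_1$, whereas you start from $A\in\vd_{xz}$ and obtain $z\perp^1_xy_1$, which is what clause (5) literally asks for; the same argument yields both.
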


\begin{proof}
Nontriviality, monotonicity, and symmetry are obvious. $y_0\forkindep_xy_1$ implies $y_0\perp^1_xy_1$ by Fact~\ref{fact1}(3). It only remains to prove strong transitivity. Suppose that $x, y_0, y_1, z$ are such that $y_0\perp^1_xy_1$ and $z\perp^1_{xy_0}y_1$ both hold. To show that $y_0z\perp^1_xy_1$ holds, let $A\in\vd_{xy_0z}\cap\vd_{xy_1}$ be any set and work to show that $A\in\vd_x$ holds. The second assumption implies $A\in\vd_{xy_0}$, and then the first assumption shows that $A\in\vd_x$ as required. The proof is complete.
\end{proof}

\begin{example}
Let $A$ be any set. Let $P=\{p\colon p$ is a linear ordering on a well-orderable set $\dom(p)\subseteq A\}$. The ordering is that of reverse inclusion. Then $P$ is $\perp^1$-balanced.
\end{example}

\begin{proof}
By inspection of the proof of Example~\ref{loexample1}. The only feature of $y_0\forkindep[x]y_1$ used in it is $\vd_{xy_0}\cap\vd_{xy_1}=\vd_x$.
\end{proof}

\subsection{$\perp^1$-independent extensions}

\noindent Producing interesting $\perp^1$-independent pairs of generic extensions is slightly more difficult than in the $\perp^0$ case. 

\begin{definition}
For a Polish group $G$, a natural number $d\geq 1$, and a tuple $\bar g\in G^d$, $\bar g!$ (the \emph{alternating} $\bar g$-\emph{factorial}) stands for the product $\bar g(d-1)^{\mp 1}\cdot \bar g(d-2)^{\pm 1}\cdot\dots\cdot\bar g(2)\bar g(1)^{-1}\bar g(0)\in G$. For a $d$-tuple $\bar U$ of nonempty subsets of $G$, $\bar U!$ denotes the set $\{\bar g!\colon \bar g\in\prod\bar U\}\subseteq G$.
\end{definition}

\begin{definition}
Let $G$ be a Polish group and $C\subset G$ be a nonempty closed set. Say that the set $C\subseteq G$ has \emph{bounded local generation} if there is $n\in\gw$ such that for every $n$-tuple $\bar U$ of nonempty relatively open subsets of $C$, the set $\bar U!$ has nonempty interior.
\end{definition}

\begin{example}
\label{sphereexample}
Let $d>1$ be a natural number, let $G=\mathbb{R}^d$ and let $C\subset G$ be the unit sphere. It is not difficult to see that $C$ has bounded local generation, with $n=2$ being sufficient.
\end{example}

\begin{example}
Let $G=\mathbb{Z}^\gw$ and let $C=A^\gw$ where $A$ is the set of all (positive) primes. Then $C$ has bounded local generation with $n=6$ being sufficient. To verify this, recall Vinogradov's theorem \cite{nathanson:bases}: every odd number larger than five is the sum of at most three primes. By a discussion of several cases, using the key identity $2+3=5$, it follows that every integer is a difference between two sums of three primes, or $G=C^6!$. Relativization to arbitrary open subsets of $C$ is left to the kind reader.
\end{example}

\begin{proposition}
\label{turbidproposition}
Suppose that $C\subseteq G$ is a set with bounded local generation. Let $x$ be a set of ordinals which defines the triple and countable bases for the topologies. Let $c\in C$ and $g\in G$ be points mutually Cohen-generic over $x$. Then $g\perp^1_xc\cdot g$ holds.
\end{proposition}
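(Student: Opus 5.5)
The plan is to mimic the proof of Proposition~\ref{0prop}, replacing the turbulence argument with one driven by bounded local generation. Let $n$ witness bounded local generation of $C$. Suppose towards a contradiction that $\vd_{xg}\cap\vd_{x\,cg}\neq\vd_x$. Since both $\vd$-classes are closed under taking the canonical well-ordering of ordinal-definable objects, a witness may be coded as a set of ordinals. So there are $\vd$-functions $F_0,F_1$ with $F_0(x,g)=F_1(x,cg)=A$ and $A\notin\vd_x$; increasing the ordinal parameters if necessary, I may take $A$ to be a set of ordinals. Consider the $\vd_x$ set
\[
B=\{\langle h,k\rangle\in C\times G\colon F_0(x,k)=F_1(x,hk)\notin\vd_x\}.
\]
It contains the mutually generic pair $\langle c,g\rangle$, so by the Baire property axiom it is comeager in some $U\times W$, with $U\subseteq C$ relatively open and $W\subseteq G$ open. (Here I use genericity in $C\times G$ with $C$ closed, the Cohen-generic parlance of the appendix.)

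The key step is to compare $F_0(x,k)$ and $F_0(x,k')$ for mutually generic $k,k'$, by zig-zagging through $F_1$. Take $\bar h\in U^n$ and $k\in W$ mutually Cohen-generic over $x$, and suppose the intermediate points stay where needed; shrink $U,W$ first so that this holds. Let $k_0=k$, $k_1=h_0k_0$, $k_2=h_1^{-1}k_1$, $k_3=h_2k_2$, and so on. Each step either applies $B$ to the pair $\langle h_i,k_i\rangle$ or applies it in reverse, to $\langle h_i,h_i^{-1}k_{i}\rangle$. This alternately equates $F_0$ values and $F_1$ values. The outcome is that $F_0(x,k)=F_0(x,\bar h!\,k)$ (or with $F_1$ at the last point, depending on parity; take $n$ even by padding). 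For this I need each consecutive pair to be mutually generic over $x$, which follows from routine repeated use of Proposition~\ref{mutualproposition}, exactly as in the claim inside Proposition~\ref{0prop}.

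Bounded local generation now says that $\bar U!$ contains a nonempty open set $V\subseteq G$. The map $\bar h\mapsto\bar h!$ is continuous and open onto its image locally, so the hope is that, for generic $\bar h$, the element $\bar h!$ is Cohen-generic in $V$ and mutually generic with $k$. Then $k'=\bar h!\,k$ is mutually generic with $k$ over $x$, and it is generic in a translate of $W$. Hence the value $A=F_0(x,k)$ lies in both $\vd_{xk}$ and $\vd_{xk'}$ for mutually generic $k,k'$, and by Fact~\ref{fact1}(3), $A\in\vd_x$, a contradiction. Alternatively, one can argue that $F_0(x,\cdot)$ is constant on a comeager subset of an open set, giving a $\vd_x$ definition of $A$ directly.

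The main obstacle is the genericity transfer through the factorial map: showing that for $\bar h\in\prod\bar U$ generic, the product $\bar h!$ lies Cohen-generically in an open piece of $G$ independent of $k$. Nonempty interior of the image alone does not give this, since images of comeager sets under continuous maps need not be comeager. I would first try to replace the direct pushforward by a category argument. Show that the set $\{\gamma\in V\colon F_0(x,k)=F_0(x,\gamma k)$ for comeagerly many $k\}$ is comeager in $V$, using Kuratowski--Ulam and the fact that the exceptional set pulls back to a non-meager set of tuples. This may require strengthening bounded local generation to an openness property of $\bar U!$ near generic points, which the paper might quietly assume.
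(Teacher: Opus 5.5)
Your outline is the paper's own: the set $B$, the zig-zag $k_0$, $k_1=h_0k_0$, $k_2=h_1^{-1}k_1,\dots$ (the paper's $h_i=(\bar d\restriction i)!\cdot h$), and the final appeal to Fact~\ref{fact1}(3). But the step you call the main obstacle is the crux, and you leave it unproved. That step is that $k$ and $\bar h!\,k$ are mutually Cohen-generic over $x$ (the paper's Claim~\ref{toughclaim}(2)), and you even suggest the hypothesis may be too weak for it. That is a genuine gap, and your diagnosis is misplaced. Genericity over $x$ means avoiding meager sets in $\vd_x$. So what matters is that \emph{preimages} of nowhere dense sets are nowhere dense, not that images of comeager sets are comeager.

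This is exactly Proposition~\ref{mutualproposition}(2). Suppose a continuous $f$ sends nonempty open sets to sets with nonempty interior. Then for nowhere dense $N$ and nonempty open $W$, the set $\mathrm{int}(f''W)\setminus\overline{N}$ is nonempty open, and its preimage inside $W$ is a nonempty open set missing $f^{-1}N$. Together with Fact~\ref{fact2}(1), this shows $f$ maps generic points to generic points. Apply it to $f\colon G\times C^n\to G\times G$, $f(k,\bar h)=\langle k,\bar h!\,k\rangle$. Let $V\subseteq\bar U!$ be the nonempty open set given by bounded local generation. Then $f''(W\times\prod_iU_i)\supseteq\Phi''(W\times V)$, where $\Phi(k,\gamma)=\langle k,\gamma k\rangle$ is a homeomorphism of $G\times G$. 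So images of nonempty open sets have nonempty interior, and $\langle k,\bar h!\,k\rangle$ is generic in $G\times G$. Hence $k\forkindep[x]\bar h!\,k$ by Fact~\ref{fact2}(2), and Fact~\ref{fact1}(3) finishes. No strengthening of bounded local generation and no Kuratowski--Ulam argument is needed. The parity of $n$ is also irrelevant, since $F_1(x,\bar h!\,k)$ lies in $\vd_{x\,\bar h!k}$ as well.

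There are two smaller corrections.

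First, drop the reduction to $A$ being a set of ordinals. It is never used, and it is false in general, because the canonical well-orderings of $\vd_{xg}$ and $\vd_{x\,cg}$ are different. If every witness could be coded this way, $\perp^1$ would coincide with $\perp^0$, contrary to the turbulent examples discussed at the start of Section~\ref{1section}.

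Second, ``shrink $U,W$'' needs to be made precise. Draw $k$ from a smaller open $W'\subseteq W$, and draw every $\bar h(i)$ from a small relatively open neighborhood of a single point of $U$. Then the even-length alternating products are close to the identity, so the even-indexed $k_{2j}$ stay in the original $W$; only these are tested against $B$. Bounded local generation still applies to the shrunken sets.
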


\begin{proof}
I will start with a general preliminary claim. Let $x$ be a set of ordinals such that $\vd_x$ contains both $G$ and $C$.

\begin{claim}
\label{toughclaim}
Let $n\geq 1$ be a number and $g\in G$ and $\bar c\in C^n$ be mutually Cohen-generic points over $x$. Then

\begin{enumerate}
\item the points $\bar (c\restriction n-1)! g\in G$ and $\bar c(n0\in C$ are mutually generic over $x$;
\item if $n$ witnesses bounded local generation of $C$ in $G$, then $g, \bar c!g\in G$ are points mutually Cohen-generic over $x$.
\end{enumerate}
\end{claim}

\noindent This is proved by routine repeated use of Proposition~\ref{mutualproposition}. Now, let $g\in G$ and $c\in C$ be mutually Cohen-generic over $x$. Suppose towards a contradiction that $\vd_{xg}\cap\vd_{x, c\cdot g}\neq\vd_x$, and find $\vd$ definable functions $F_0, F_1$ such that $F_0(x, g)=F_1(x, c\cdot g)\notin\vd_x$. Consider the set $B=\{\langle h, d\rangle\in G\times C\colon F_0(x, h)=F_1(x, d\cdot h)\notin\vd_x\}$. This is a $\vd_x$ set containing the Cohen-generic pair $\langle g, c\rangle$, so it must be non-meager. Since every set has the Baire property, ther must be basic open sets $U\subset G$ and $O\subset C$ such that $B$ is co-meager in $O\times U$. Use the continuity of the action to find nonempty open sets $O'\subset O$ and $\bar V(i)\subset U$ for $i\in n$ such that for each even $i$, $(\bar V\restriction i)!\cdot O'\subset O$. Let $h\in G$ and $\bar d$ be mutually generic elements of $Y$ and $C^n$ below the conditions $O'$ and $\bar V$ respectively. For each $i\leq n$ write $h_i=(\bar d\restriction i)!\cdot h$.  

It is clear from Claim~\ref{toughclaim}(1) that for each even $i\in n$, $h_i\in U$ and $\bar d(i)\in O$ are mutually Cohen-generic elements over $x$, so $\langle h_i, \bar d(i)\rangle\in B$ holds. Thus, $F_0(x, h_i)=F_1(x, h_{i+1})\notin\vd_x$ holds. Similarly, $h_{i+2}\in U$ and $\bar d(i+1)\in O$ are mutually generic elements as well, and $F_0(x, y_{i+2})=F_1(x, y_{i+1})$ holds. It follows that $F_0(x, h_0)=F_0(x, h_n)$ and this set does not belong to $\vd_x$. This, however, contradicts the mutual genericity of $h_0$ and $h_n$ over $x$ (Claim~\ref{toughclaim}(2)) and Fact~\ref{fact1}(3).
\end{proof}

\subsection{$\perp^1$ consistency results}

\begin{theorem}
Suppose that $P$ is a cofinally $\perp^1$-balanced forcing. In the $P$-extension, for every nonmeager set $A\subset \mathbb{R}^2$ there is a real number $\eps>0$ such that for all positive reals $\gd<\eps$ there are two points in $A$ at Euclidean distance exactly $\delta$.
\end{theorem}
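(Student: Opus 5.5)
Suppose towards a contradiction that some condition $p\in P$ and $P$-name $\tau$ satisfy: $p\Vdash\tau\subset\mathbb{R}^2$ is nonmeager, and for every $\eps>0$ there is a positive $\gd<\eps$ such that no two points of $\tau$ are at distance exactly $\gd$. I would use cofinal $\perp^1$-balance to find a set $x$ of ordinals with $P,p,\tau\in\vd_x$ and a set $O\subset P$, $\perp^1$-balanced over $x$, consisting of conditions below $p$. As in the $\perp^0$ theorem, let $A=\{u\in\mathbb{R}^2\colon$ some $q\in O$ forces $\check u\in\tau\}$. This set is in $\vd_x$, and every condition in $O$ forces $\tau\subseteq\check A$, so $A$ is nonmeager. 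By the Baire property axiom there is a basic open disc $D$ in which $A$ is comeager. I would take $\eps$ smaller than the radius of $D$, pick a condition $q\in O$ and a positive $\gd<\eps$, and work with the translation group $G=\mathbb{R}^2$ and the closed set $C=\gd\cdot S^1$, the circle of radius $\gd$. This set has bounded local generation by Example~\ref{sphereexample}, with scaling harmless.

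The subtle point is that $\gd$ is chosen in the extension, so it is only named. By Corollary~\ref{opencorollary}(3), conditions in $O$ decide statements with parameters in $\vd_x$ uniformly. So I would argue directly: there is $q'\le q$ in $O$ and a real $\gd<\eps$ such that $q'$ forces ``no two points of $\tau$ are at distance $\gd$''. I then enlarge $x$ to $x\gd$ by replacing $O$ with the $\vd_{x\gd}$ open subset $O'$ of conditions below some such $q'$. I need $O'$ to be $\perp^1$-balanced over $x\gd$. Since $\perp^1$-balance over $x$ only concerns pairs $y_0\perp^1_x y_1$, I would check that $y_0\perp^1_{x\gd}y_1$ together with conditions in $\vd_{x\gd y_i}$ can be transferred. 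This is the step I expect to be the main obstacle.

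The cleaner alternative, which I would try first, is to avoid it. All conditions in $O$ force the same statements with parameters in $\vd_x$, and ``$\check\gd$ is a bad distance'' has parameter $\gd\in\mathbb{R}\subset\vd_x$ only if $\gd$ is ordinal-definable from $x$, which it need not be. Instead I would fix the set $\Delta=\{\gd\colon$ all (equivalently some) conditions of $O$ force $\gd$ bad$\}$. As the statement concerns a single real, I would apply Proposition~\ref{extensionproposition2} to the subsets of $O$ deciding it. Then $\Delta\in\vd_x$, and $O$ forces that $\Delta$ contains arbitrarily small reals. So I can fix any $\gd\in\Delta$ below $\eps$ lying in $\vd_x$, taking for instance the least element in the canonical ordering of $\vd_x$ after noting that $\Delta\cap(0,\eps)$ is nonempty in $\vd_x$.

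With $\gd\in\vd_x$ fixed, choose $g\in D$ and $c\in C$ mutually Cohen-generic over $x$, with $c$ small enough that $g,c+g\in D$; both points lie in $A$. By Proposition~\ref{turbidproposition}, $g\perp^1_x c+g$. The sets $\{q\in O\colon q\Vdash\check g\in\tau\}\in\vd_{xg}$ and $\{q\in O\colon q\Vdash c+g\in\tau\}\in\vd_{x,c+g}$ are nonempty open subsets of $O$. So by Proposition~\ref{extensionproposition2} they share a condition. That condition forces two points of $\tau$ at distance exactly $\gd$, contradicting $\gd\in\Delta$.
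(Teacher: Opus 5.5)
There is a genuine gap in how you reduce to a distance $\gd\in\vd_x$. Your last paragraph is correct, but only for such a $\gd$. Corollary~\ref{opencorollary}(3) makes the conditions of $O$ agree only on statements whose parameters lie in $\vd_x$. So for $\gd\notin\vd_x$ the ``all (equivalently some)'' in your definition of $\Delta$ is unjustified. The hypothesis only gives you, below each $r\in O$, some $r'\leq r$ and some $\gd<\eps$ with $r'$ forcing $\gd$ bad. Those witnesses need not lie in $\vd_x$ and need not belong to $\Delta$ at all, so nothing shows that $O$ forces $\Delta$ to contain arbitrarily small reals.

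Proposition~\ref{extensionproposition2} does not help here. The two subsets of $O$ deciding ``$\gd$ is bad'' both lie in $\vd_{x\gd}$, and $\gd\perp^1_x\gd$ holds only if $\gd\in\vd_x$. Moreover, a nonempty $\vd_x$ set of reals need not contain any element of $\vd_x$; the set of reals Cohen-generic over $x$ is an example. So there is no least element ``in the canonical ordering of $\vd_x$'' to take. In fact your own final paragraph shows that $O$ forces every $\gd\in\vd_x\cap(0,\eps)$ to be good. Hence $\Delta\cap(0,\eps)\cap\vd_x$ is empty, and your argument never reaches the distances outside $\vd_x$, which are the whole difficulty.

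The missing idea is the one your first approach was reaching for, but it needs no transfer of balance from $O$. As you suspected, such a transfer fails, since $\gd$ lies in both $\vd_{x\gd y_0}$ and $\vd_{x\gd y_1}$. Instead, apply cofinal $\perp^1$-balance a second time.

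Take an arbitrary $q\in O$ and an arbitrary $\gd<\eps$; this is a ground-model real, since cofinally balanced forcing adds no reals. Find a set $y$ of ordinals with $q,\gd\in\vd_{xy}$ and a set $O'$ which is $\perp^1$-balanced over $xy$ and consists of conditions below $q$, so $O'\subseteq O$. Choose $g\in D$ and $c$ on the circle of radius $\gd$, mutually Cohen-generic over $xy$, with $h=g+c\in D$.

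Then $g$ and $h$ are Cohen-generic over $x$, so $g,h\in A$. Also $y\forkindep[x]g$ and $y\forkindep[x]h$ by Fact~\ref{fact2}(2). Apply Proposition~\ref{extensionproposition1} to $O$, pairing $O'\in\vd_{xy}$ with $\{r\in O\colon r\Vdash\check g\in\tau\}\in\vd_{xg}$, and likewise for $h$. This shows that $O'_0=\{r\in O'\colon r\Vdash\check g\in\tau\}$ and $O'_1=\{r\in O'\colon r\Vdash\check h\in\tau\}$ are nonempty.

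Finally, $g\perp^1_{xy}h$ by Proposition~\ref{turbidproposition}. The $\perp^1$-balance of $O'$ over $xy$ then gives $O'_0\cap O'_1\neq 0$, that is, a condition below $q$ forcing two points of $\tau$ at distance $\gd$. This is the paper's argument, run directly rather than by contradiction.
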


\begin{proof}
Let $P$ be a cofinally $\perp^1$-balanced forcing. Let $\tau$ be a $P$-name and $p\in P$ be a condition such that $p\Vdash\tau\subset\mathbb{R}^2$ is a non-meager set. Use the assumption on $P$ to find a set $x$ of ordinals and a set $O\subset P$ which is balanced over $x$ and consists only of conditions stronger than $p$. Consider the set $A=\{z\in\mathbb{R}^2\colon\exists q\in O\ q\Vdash \check z\in\tau\}\in\vd_x$. This set must be non-meager by the original assumption on $p$ and $\tau$, and by the Baire axiom, there must be an open ball $B\subset\mathbb{R}^2$ such that $B\setminus A$ is meager. Let $\eps>0$ be a real number which is smaller than the radius of $B$. It will be enough to show that any condition in $O$ forces $\eps$ to work as in the theorem for $\tau$.

To see this, let $\gd<\eps$ be a positive real number and $q\in O$ be a condition. I must find points $z_0, z_1\in\mathbb{R}^2$ at distance $\gd$ and a condition $r\leq q$ such that $r\Vdash \check z_0, \check z_1\in\tau$. To do this, let $y$ be a set of ordinals such that $\gd, q\in\vd_{xy}$, and there is a set $O'\subset P$ which is $\perp^1$-balanced over $xy$ and consists only of conditions stronger than $q$. 
Let $C$ be the sphere around the origin of radius $\gd$. Find points $g\in B$ and $c\in C$ Cohen-generic over ${xy}$ such that $g\forkindep_{xy} c$ and both points $z_0=g$ and $z_1=z_0+v$ belong to the open ball $B$. Notice that both of these points are points in $\mathbb{R}^2$ Cohen-generic over ${xy}$, therefore over $x$ as well. There is a string of consequences to this observation:

\begin{itemize}
\item by the choice of the open neighborhood $B$, $z_0, z_1\in A$;
\item $y\forkindep_xz_0$ and $y\forkindep_xz_1$;
\item by the balance assumption on the set $O$, the sets $O_0=\{r\in O'\colon r\Vdash\check z_0\in\tau\}\in\vd_{xyz_0}$ and $O_1=\{r\in O'\colon r\Vdash\check z_1\in\tau\}\in\vd_{xyz_1}$ are nonempty.
\end{itemize}

\noindent Now, by Example~\ref{sphereexample}, $z_0\perp^1_{xy}z_1$ holds, and the balance assumption on the set $O'$ shows that $O_0\cap O_1\neq 0$. Any condition $r$ in the intersection has the required properties.
\end{proof}

\section{Interpolation in Artinian semilattices}
\label{artiniansection}

In \cite{z:triangles, z:krull}, I produced a line of ZF consistency results calibrated by dimensions of Euclidean spaces. The current section provides a rather flexible, simplified treatment of such consistency results. The dimension is measured by rank of distributive Artinian lattices.

\subsection{$\perp^L$ definition}

Start with recalling basic facts about Artinian semilattices. A meet semilattice $\langle L, \land\rangle$ is \emph{Artinian} if every strictly decreasing sequence in $L$ is finite. For such a semilattice, under DC, for every set $A\subset L$ there is a finite set $a\subseteq A$ such that $\bigwedge a\leq u$ for every $u\in A$. To see this, consider the set of all finite sets $b\subseteq A$ pre-ordered by $b_1\leq b_0$ if $\bigwedge b_1\leq\bigwedge b_0$ in $L$ holds. By DC, the associated quotient partial order either has a minimal element or an infinite strictly descending sequence. The infinite descending sequence is precluded by the Artinian assumption on $L$, and a minimal element is represented by a finite set $a\subseteq A$ with the requested property.

\begin{definition}
Let $\langle L, \land\rangle$ be an Artinian semilattice. Write $y_0\perp^L_xy_1$ if $L\in\vd_x$ and for all elements $u_0\leq u_1$ in $L\cap\vd_{xy_0}$ and $L\cap\vd_{xy_1}$ respectively there is an \emph{interpolant} $v\in \vd_x$ such that $u_0\leq v\leq u_1$, and similarly for $u_1\leq u_0$.
\end{definition}

\begin{proposition}
\label{alg}
$\perp^L$ is an independence relation with symmetry and strong transitivity.
\end{proposition}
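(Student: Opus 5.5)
Nontriviality, monotonicity and symmetry follow from the definition, as for $\perp^0$ and $\perp^1$; the substantive parts are strong transitivity and that $\forkindep$ implies $\perp^L$. By the remark after the definition of an independence relation, these two together give transitivity.

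\emph{Routine clauses.} Symmetry holds because the definition of $\perp^L$ already treats both orders $u_0\leq u_1$ and $u_1\leq u_0$. Nontriviality holds since with $y_0=y_1=x$ every $u_0\leq u_1$ in $L\cap\vd_x$ has $v=u_0$ as an interpolant. For monotonicity, if $y_0'\in\vd_{xy_0}$ then $\vd_{xy_0'}\subseteq\vd_{xy_0}$, so the required interpolants for the pair $y_0',y_1$ are among those already provided for $y_0,y_1$.

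\emph{$\forkindep$ implies $\perp^L$.} Suppose $y_0\forkindep[x]y_1$ and $u_0\leq u_1$ with $u_0\in L\cap\vd_{xy_0}$ and $u_1\in L\cap\vd_{xy_1}$. The set $A=\{v\in L\cap\vd_{xy_1}\colon u_0\leq v\}$ lies in $\vd_{xy_0y_1}$. I would not work with $A$ directly. Instead consider $D=\{v\in L\colon v\in\vd_{xy_1},\ v\geq u_0\}$, and by the Artinian property and DC take its $\wedge$-minimum $w=\bigwedge a$ for a finite $a\subseteq D$. Then $w\in\vd_{xy_1}$ is still a valid upper bound for $u_0$.

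Symmetrically, let $w'$ be the least element of $L\cap\vd_{xy_0}$ above $w$. The aim is to show that $w$ is definable from $xy_0$ as well, hence lies in $\vd_{xy_0}\cap\vd_{xy_1}=\vd_x$ by Fact~\ref{fact1}(3). That would make $w$ the interpolant, since $u_0\leq w\leq u_1$.

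The natural way to establish this is a Fact~\ref{fact1}(4)-type separation. Consider the set $\{v\in L\colon u_0\leq v\}$, which is in $\vd_{xy_0}$, and its trace on $\vd_{xy_1}$. Independence should give a set $S\in\vd_x$ with $\{v\geq u_0\}\cap\vd_{xy_1}=S\cap\vd_{xy_1}$. Replacing $S$ by its up-closure if needed, its $\wedge$-minimum $s=\bigwedge b$ over a finite $b\subseteq S\cap\vd_x$ lies in $\vd_x$ and satisfies $s\leq u_1$. What remains is to check that $u_0\leq s$. Elements of $S\cap\vd_x\subseteq\vd_{xy_1}$ are $\geq u_0$, and $\{v\geq u_0\}$ is closed under meets, so $s\geq u_0$. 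Getting this separation set $S$ from the axioms in the appendix is the step I expect to be the main obstacle. I would derive it from Fact~\ref{fact1}(4) applied to subsets of the $\vd_x$ set $L$, using the canonical well-ordering to code subsets by sets of ordinals.

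\emph{Strong transitivity.} Assume $y_0\perp^L_xy_1$ and $z\perp^L_{xy_0}y_1$; we must show $y_0z\perp^L_xy_1$. Take $u_0\leq u_1$ with $u_0\in\vd_{xy_0z}$ and $u_1\in\vd_{xy_1}$. The second hypothesis, applied over $xy_0$ with $u_1\in\vd_{xy_0y_1}$, yields an interpolant $v\in\vd_{xy_0}$ with $u_0\leq v\leq u_1$. The first hypothesis then gives $v'\in\vd_x$ with $v\leq v'\leq u_1$, and $v'$ is the required interpolant. The reverse order $u_1\leq u_0$ is handled symmetrically.
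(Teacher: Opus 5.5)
Your routine clauses, your derivation of transitivity from the remark after the definition, and your strong transitivity argument (interpolate over $xy_0$, then over $x$) are fine and match the paper.

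\textbf{The gap.} It lies in the one substantive step, $y_0\forkindep[x]y_1\Rightarrow y_0\perp^L_xy_1$. You never produce a set $S\in\vd_x$ with $S\cap\vd_{xy_1}=\{v\in L\colon u_0\le v\}\cap\vd_{xy_1}$, and Fact~\ref{fact1}(4) cannot produce it. That fact separates two disjoint subsets \emph{of $\vd_x$}, one lying in $\vd_{xy_0}$ and the other in $\vd_{xy_1}$. Your trace and its complement consist of elements of $L\cap\vd_{xy_1}$ that are typically not in $\vd_x$, and as sets they are only known to lie in $\vd_{xy_0y_1}$.

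Coding by the canonical well-ordering does not help either. $L\in\vd_x$ does not give $L\subseteq\vd_x$: the algebraic subsets of $\mathbb{R}^d$ include every singleton, so by Fact~\ref{fact1}(2) this $L$ is not even well-orderable.

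No general principle of this shape can hold. If $y_0$ is a real Cohen-generic over $x$, the set $\{r\in\mathbb{R}\colon r<y_0\}\in\vd_{xy_0}$ admits no such $S$. Already its trace on $\mathbb{Q}\subseteq\vd_x$ would give $y_0=\sup(S\cap\mathbb{Q})\in\vd_x$. For your up-set the trace statement is true, but only as a consequence of interpolation: take $S=\{v\colon s\le v\}$ with $s$ the least element of $L\cap\vd_x$ above $u_0$.

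Your final deduction has the same circularity. Since $\vd_x\subseteq\vd_{xy_1}$, you have $S\cap\vd_x=\{v\in L\cap\vd_x\colon u_0\le v\}$. So the least element of $S\cap\vd_x$ is exactly this $s$, and $s\le u_1$ is literally the statement that an interpolant exists. It does not follow from $u_1\in S$, because $u_1\notin\vd_x$ in general.

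\textbf{The missing idea.} Use the product property of $\forkindep$, which lets you replace $u_1$ by the meet of a whole $\vd_x$-definable family; this is the paper's proof.
Write $u_0=F_0(x,y_0)$ and $u_1=F_1(x,y_1)$. Apply the product property to the $\vd_x$ set of pairs $\langle y_0',y_1'\rangle$ with $F_0(x,y_0')\le F_1(x,y_1')$. This gives $\vd_x$ sets $A_0\ni y_0$ and $A_1\ni y_1$ such that every $\forkindep[x]$-independent pair in $A_0\times A_1$ satisfies the inequality.

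Put $v=\bigwedge\{F_1(x,y_1')\colon y_1'\in A_1\}$. It is definable from $x$, so $v\in\vd_x$, and $v\le u_1$. By the Artinian property, $v=\bigwedge_{y_1'\in a_1}F_1(x,y_1')$ for some finite $a_1\subseteq A_1$.

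Suppose $u_0\not\le v$. Then the $\vd_x$ set $\{y_0'\in A_0\colon F_0(x,y_0')\not\le v\}$ is nonempty. Fact~\ref{fact1}(1) gives an element $y_0'$ of it independent over $x$ from a code of $a_1$, hence from each $y_1'\in a_1$. Then $F_0(x,y_0')\le F_1(x,y_1')$ for all $y_1'\in a_1$, so $F_0(x,y_0')\le v$, a contradiction.

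The case $u_1\le u_0$ is symmetric. With this step in place, the rest of your argument goes through.
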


\begin{proof}
The monotonicity and symmetry properties are obvious. Let $x$ be a set of ordinals such that $L\in\vd_x$; I must show that $y_0\forkindep_x y_1$ implies $y_0\perp^L_xy_1$.

Let $u_0\in \vd_{xy_0}$ and $u_1\in\vd_{xy_1}$ be arbitrary elements in $L$, and let $F_0$ and $F_1$ be $\vd$-functions such that $u_0=F_0(x, y_0)$ and $u_1=F_1(x, y_1)$. Assume for definiteness that $u_0\leq u_1$. Use the product property of $\forkindep$ to find $\vd_{x}$ sets $A_0$ and $A_1$ such that $y_0\in A_0$ and $y_1\in A_1$, and whenever $y'_0\in A_0$ and $y'_1\in A_1$ are $\forkindep_{x}$ independent, then $F_0(x, y'_0)\leq F_1(x, y'_1)$ holds. Let $v=\bigwedge\{F_1(x, y'_1)\colon y'_1\in A_1\}\in\vd_{x}$. It will be enough to show that $u_0\leq v\leq u_1$.

Now, $v\leq u_1$ is immediate from the definition of $w$. To see that $u_0\leq v$ holds, suppose towards a contradiction that it fails, so the set $B_0=\{y'_0\in A_0\colon F_0(x, y'_0)\not\leq v\}$ is nonempty. Use the Artinian property of the lattice $L$ to find a finite set $a_1\subset A_1$ such that $v=\bigwedge\{F_1(x, y'_1)\colon y'_1\in a_1\}$. Let $y'_0\in B_0$ be an element such that $y'_0\forkindep_{x}y'_1$ holds for all $y'_1\in b$. The choice of the sets $A_0$ and $A_1$ shows that $F_0(x, y'_0)\leq F_1(x, y'_1)$ holds for all $y'_1\in a_1$, so $F_0(x, y'_0)\leq v$ by the choice of the set $a_1$. This contradicts the definition of the set $B_0$.

All that remains is to show strong transitivity. Suppose that $y_0\perp^L_xy_1$ and $z\perp^L_{xy_0}y_1$ and argue for $y_0z\perp^L_xy_1$. Suppose for example that $u_0\in L\cap\vd_{xy_0z}$, $u_1\in L\cap \vd_{xy_1}$, and $u_0\leq u_1$; we need to find an interpolant in $\vd_x$. First of all, the second assumption yields an interpolant $w$ between $u_0$ and $u_1$ in $\vd_{xy_0}$. The second assumption then yields an interpolant between $w$ and $u_1$ in $\vd_x$. Strong transitivity follows.
\end{proof}

\begin{example}
\label{algd}
Let $d>0$ be a number, and let $L$ be the lattice of algebraic subsets of $\mathbb{R}^d$ with intersection. Let $P_d$ be the poset of Example~\ref{alg} for coloring the rational distance graph on $\mathbb{R}^d$ with countably many colors. Then $P_d$ is $\perp^L$-balanced.
\end{example}

\begin{proof}
By inspection of the proof of Example~\ref{alg}.
\end{proof}

\subsection{$\perp^L$-independent generic extensions}

The main point of the current section is learning to distinguish between different Artinian semilattices. In order to do that, recall a well-known notion of rank of Artinian distributive lattice. Suppose that $\langle L, \land, \lor\rangle$ is a distributive Artinian lattice. Call an element $u\in L$ \emph{irreducible} if for any $v_0, v_1\in L$, if $u\leq v_0\lor v_1$ then $u\leq v_0$ or $u\leq v_1$ holds. Every element $u\in L$ can be written in a unique way as $u=\bigwedge a$ where $a$ is a finite set of irreducible elements. One consequence of the uniqueness is that if $x$ is a set of ordinals such that $u\in\vd_x$, then $a\in\vd_x$ and by Fact~\ref{fact1}(2) $a\subset\vd_x$. The \emph{rank} of an Artinian lattice is the maximal length of strictly decreasing sequence of irreducible elements if such exists. It is well-known that the lattice of algebraic subsets of $\mathbb{R}^n$ has rank $n$.

Now let $L$ be a distributive Artinian lattice of rank at most $n$. I want to produce a pair of $\perp^L$-independent generic extensions. For the sake of brevity and coherence I consider a group-theoretic situation only.

\begin{definition}
Let $G$ be a Polish group and $C\subset G$ a nonempty closed set. Say that $C$ has \emph{duplication index} $\geq n$ in $G$ if for all nonempty open sets $U\subset G$ and $O_i\subset C$ for $i\in n$ (the latter in the inherited topology on $C$), the set $\{\langle g+\bar c(i)\colon i\in n\rangle\colon g\in U$ and $\bar c\in\prod_iO_i\}\subset G^n$ has nonempty interior.
\end{definition}

\begin{example}
\label{aa}
Let $G=\mathbb{R}^n$ and $C=\mathbb{S}^{n-1}\subset\mathbb{R}^n$. Then $C$ has duplication index $\geq n$ in $G$.
\end{example}

\begin{proof}
It is enough to verify the statement for open neighborhoods $U$ of the origin. Shrinking $U$ and $O_i$ if necessary I may assume that $U$ is a ball around the origin, any tuple $\bar c\in\prod_iO_i$ is linearly independent, and in addition for any sequence $\bar d\in 4U^n$, the tuple $\bar c+\bar d$ is linearly independent. Then it follows that the map $f\colon \langle g, \bar c\rangle\mapsto \langle g+\bar c(i)\colon i\in n\rangle$ from $U\times \prod_iO_i\rangle\to (\mathbb{R}^n)^n$ is one-to-one.

To see this, choose a point $g\in 2U$ and an element $\bar c\in\prod_iU_i$ such that for each $i\in n$, $\bar c(i)+g\in O_i$ holds, and work to show that $g=0$. The assumption means that for every $i\in n$, $(\bar c(i)+g)\cdot (\bar c(i)+g)=1$. Distributing the dot product, this turns into $(\bar c_i-4g)\cdot g=0$. However, the vectors $\bar c_i-4g\in \mathbb{R}^n$ for $i\in n$ form a linearly independent tuple by the initial assumption on $U$ and $O_i$ for $i\in n$, so this system of equations (with $g$ viewed as an unknown vector) has only trivial solution $g=0$.

Thus, the map $f$ is one-to-one and definable in $\mathbb{R}$ with addition and multiplication. The rest of the argument follows easily from well-known results in real algebraic geometry \cite{vandendries:tame}. The domain of $f$ has dimension $n+(n-1)n=n^2$, so its range has the same (tame-topological) dimension. The range is then a definable subset of $(\mathbb R^n)^n$ of full dimension, which means that it contains a nonempty open subset.
\end{proof}

\begin{proposition}
\label{ab}
Let $G$ be a Polish group and $C\subset G$ be a nonempty closed set of duplication index $\geq n$. Let $L$ be a distributive Artinian lattice of rank $\leq n$. If $g\in G$ and $c\in C$ are elements mutually Cohen-generic over $x$, then $g\perp^L_xg+c$ holds.
\end{proposition}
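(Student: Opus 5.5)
We need to show: for $u_0\in L\cap\vd_{xg}$, $u_1\in L\cap\vd_{x,g+c}$ with $u_0\leq u_1$ (the other case is symmetric, using $-c$ in place of $c$), there is an interpolant in $\vd_x$. Presumably $L\in\vd_x$ and $G,C$ are definable from $x$.

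The plan mimics the interpolation argument in Proposition~\ref{alg}, with the duplication index substituting for the product property of $\forkindep$. Write $u_0=F_0(x,g)$ and $u_1=F_1(x,g+c)$, and decompose $u_1=\bigwedge a$ with $a$ a finite set of irreducibles in $\vd_{x,g+c}$. It suffices to interpolate between $u_0$ and each irreducible $w\in a$, since the meet of the resulting interpolants in $\vd_x$ is again an interpolant. So assume $u_1$ is irreducible. The set $B=\{\langle h,d\rangle\in G\times C\colon F_0(x,h)\leq F_1(x,h+d)\}$ is in $\vd_x$ and contains the generic pair $\langle g, c\rangle$. By the Baire property there are basic open $U\subset G$ and $O\subset C$ with $B$ comeager in $U\times O$, and we may also arrange that $F_1(x,h+d)$ is irreducible for comeager many such pairs.

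The candidate interpolant is the element computed from the generic type rather than from points. For $h\in U$ generic, let $v(h)=\bigwedge\{F_1(x,h+d)\colon d\in O$ Cohen-generic over $xh\}$. By Artinianity this meet is achieved by finitely many generic $d$'s, and by homogeneity of Cohen forcing it should not depend on the choice of generics beyond $h$, so $v(h)\in\vd_{xh}$ with $u_0\leq v(g)\leq u_1$. This reduces the problem to an interpolant in $\vd_{xg}$; we still need to remove the dependence on $g$. Here the rank bound enters. Iterate: take mutually generic $d_0,\dots,d_{n-1}\in O$ and consider the irreducibles $F_1(x,h+d_i)$ together with their partial meets. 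The duplication index $\geq n$ says the tuple $\langle h+d_i\colon i\in n\rangle$ ranges over a set with nonempty interior in $G^n$, so for generic choices the points $h+d_i$ are mutually Cohen-generic over $x$ and independent of the particular $h$. Consequently the finite meet $\bigwedge_{i<n}F_1(x,h+d_i)$ is of the form $F(x,\bar e)$ with $\bar e$ a generic $n$-tuple in $G^n$ not involving $h$ separately.

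The crux, and the expected main obstacle, is showing that such a meet of $n$ terms already stabilizes to an element in $\vd_x$, or at least that it serves as an interpolant. The argument should run like the definition of rank. Consider the decreasing chain of partial meets $\bigwedge_{i<k}F_1(x,h+d_i)$ and decompose each into irreducible components above $u_0$. Each strict decrease produces a longer strictly decreasing chain of irreducibles, and since the rank of $L$ is at most $n$, at some stage $k\leq n$ adding further generic $d$'s no longer lowers the meet. At that stage the meet equals $v(h)$. It is then both in $\vd_{x\bar e}$ for an $n$-tuple $\bar e$ mutually generic with $h$, and in $\vd_{xh}$. Running the same computation with two mutually generic tuples and applying Fact~\ref{fact1}(3) to the intersection $\vd_{xh}\cap\vd_{x\bar e}$ gives $v(g)\in\vd_x$. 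This completes the interpolation.
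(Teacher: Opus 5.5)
Your proposal has a genuine gap, at the step you yourself flag as the crux, and the final independence claim is false.

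\textbf{The rank step.} The rank of $L$ bounds chains of \emph{irreducible} elements. Your partial meets $\bigwedge_{i<k}F_1(x,h+d_i)$ are not irreducible, and a strict drop in such a meet need not lower any irreducible component above $u_0$. In the lattice of algebraic subsets of $\mathbb{R}$, chains of irreducibles are no longer than $\{p\}\subsetneq\mathbb{R}$. Yet finite meets can drop arbitrarily often, as in $\{1,2,3\}\supsetneq\{1,2\}\supsetneq\{1\}$, while the component above $u_0=\{1\}$ never moves. So nothing forces stabilization at some $k\leq n$, and nothing identifies a stabilized meet with $v(h)$.

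\textbf{The final step.} Duplication index $\geq n$ only says that $\bar e=\langle h+d_i\colon i\in n\rangle$ is Cohen-generic in $G^n$, i.e.\ that its coordinates are independent of \emph{each other} over $x$. It does not make $\bar e$ mutually generic with $h$. For $C$ the unit sphere in $\mathbb{R}^n$, $h$ is one of at most two points at distance $1$ from every $\bar e(i)$. Hence $h\in\vd_{x\bar e}$ and $\vd_{xh}\cap\vd_{x\bar e}=\vd_{xh}$, and two tuples built over the same $h$ both have $h$ in their $\vd$. So Fact~\ref{fact1}(3) gives nothing, and $v(g)\in\vd_x$ is never established.

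\textbf{The reduction.} It targets the wrong side. What is usable is an irreducible \emph{lower} element $u_0$, because then, for every $z$, the least element of $L\cap\vd_z$ above $u_0$ is again irreducible.

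\textbf{The missing idea.} Exploit the mutual independence of the coordinates $\bar y(i)=g+\bar d(i)$ through definability supports.
\begin{enumerate}
\item Take $u_0$ irreducible with $u_0\leq\bar v(i)\in\vd_{x\bar y(i)}$ for all $i$. For $b\subseteq n$, let $w_b$ be the least element of $L\cap\vd_{x\bar y\restriction b}$ above $u_0$; it is irreducible.
\item By independence and Fact~\ref{fact1}(3), the sets $a\subseteq n$ with $w_b\in\vd_{x\bar y\restriction a}$ are closed under intersection, so there is a least one, $a_b$.
\item If $a_b$ is empty for some nonempty $b$, then $w_b\in\vd_x$ interpolates between $u_0$ and $\bar v(i)$ for every $i\in b$.
\item Otherwise, remove one element of $a_b$ at a time, from $b=n$ down to the empty set. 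This yields a strictly decreasing chain of $n+1$ irreducibles, against the rank bound.
\end{enumerate}

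The paper then finishes with the product property of $\forkindep$. Suppose $u_0=F_0(x,g)\leq F_1(x,g+c)=u_1$ had no interpolant. Then there are $\vd_x$ sets $A_0\ni g$ and $A_1\ni c$ such that every $\forkindep[x]$-independent pair $g'\in A_0$, $c'\in A_1$ has $F_0(x,g')\leq F_1(x,g'+c')$ with no interpolant in $\vd_x$. Apply this to $g$ and a tuple $\bar d\in A_1^n$ generic over $xg$. By the duplication index the translates $g+\bar d(i)$ are independent over $x$, so this contradicts the dichotomy above.
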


\begin{proof}
The argument relies on a general claim of independent interest.

\begin{claim}
\label{duplicationclaim}
Suppose that $\bar y$ is an $n$-tuple $\forkindep$-independent over $x$, $\bar v$ is an $n$-tuple of elements of $L$ such that $\bar v(i)\in \vd_{x\bar y(i)}$, and $u\leq\bigwedge_i\bar v(i)$ is an indecomposable element. Then there is an $i\in n$ and an interpolant between $u$ and $\bar v(i)$ in $\vd_x$.
\end{claim}

\begin{proof}
For every set $b\subseteq n$, let $w_b=\bigwedge\{w\in L\cap \vd_{x\bar y\restriction b}\colon u\leq w\}$. By the Artinian property of $L$, $w_b$ exists as an element of $L$ and belongs to $\vd_{x\bar y\restriction b}$. It must be irreducible: otherwise, the decomposition of it into irreducible elements would belong to $\vd_{x\bar y\restriction b}$, and the irreducibility of $u$ would guarantee that one of the composants of $w_b$ must be above $u$. This would contradict the definition of $w_b$.

Fix $b\subseteq n$. The indepence of $\bar y$ together with Fact~\ref{fact1}(3) shows that the set $\{a\subseteq n\colon w_b\in\vd_{x\bar y\restriction a}\}$ is closed under intersection, so it contains an inclusion-smallest element $a_b$. There are now two cases.

\noindent\textbf{Case 1.} There is a nonempty set $b\subseteq n$ such that $a_b=0$. In such a case, pick $i\in b$ and observe that $w_b$ is an interpolant between $u$ and $\bar v(i)$ in $\vd_x$. This confirms the conclusion of the claim.

\noindent\textbf{Case 2.} Case 1 fails. In this case, by reverse recursion on $i\leq n$ build sets $b_i\subseteq n$ such that $b_n=n$ and $b_{i-1}=b_i$ with any single element of $a_{b_i}$ removed from $b_i$. It is then clear that $w_{b_i}\leq w_{b_{i-1}}$ and the equality does not occur, because $w_{b_i}$ does not belong to $w_{b_{i-1}}$ by the minimal choice of $a_{b_i}$. Thus, $\langle w_{b_i}\colon i\leq n\rangle$ is a strictly decreasing sequence of irreducible elements of $L$ of length $n+1$, contradicting the initial assumptions on $L$. The claim follows.
\end{proof}

Now, let $x$ be a set of ordinals such that $L\in\vd_x$. Let $g\in G$ and $c\in C$ be points mutually Cohen-generic over $x$ in $G$; I must argue that $g\perp^L_x g+c$. In order to do this, let $u_0\in \vd_{xg}$ and $u_1\in\vd_{xg+c}$ be elements of $L$ such that $u_0\leq u_1$; I must find an interpolant in $\vd_x$. In order to do this, let $F_0$, $F_1$ be $\vd$-functions such that $u_0=F_0(x, g)$ and $u_1=F_1(x, g+c)$.

Suppose towards a contradiction that the desired interpolant does not exist. Use the product property of $\forkindep$ to find $\vd_x$ sets $A_0, A_1$ such that $g\in A_0$, $c\in A_1$, and for every $g'\in A_0$ and $c'\in A_1$ such that $g'\forkindep_x c'$, $F_0(x, g')\leq F_1(x, g+c)$ holds and there is no interpolant between them in $\vd_x$.

Let $\bar d\in A_1^n$ be a tuple of elements of $A_1$ which is $\forkindep_{xg}$-independent. By the assumption on the duplication index of $C$, the tuple $\langle g+\bar d(i)\colon i\in n\rangle$ is $\forkindep_x$-independent. By the choice of the set $A_1$, for every $i\in n$, $u_0\leq F_1(x, g+\bar d(i))$ holds and there is no interpolant in $\vd_x$. This, however, immediately contradicts Claim~\ref{duplicationclaim}.
\end{proof}

\subsection{$\perp^L$ consistency results}

\begin{theorem}
\label{altheorem}
Let $d>0$ be a number and $L$ be a distributive Artinian lattice of rank $\leq d$. In every cofinally $\perp^L$-balanced forcing extension, if $A\subset\mathbb{R}^{d+1}$ is a co-meager set then there is $\eps>0$ such that for every positive real $\delta<\eps$ $A$ contains a pair of points of distance $\delta$.
\end{theorem}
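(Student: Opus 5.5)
The proof follows the $\perp^1$ consistency theorem almost verbatim, with $\perp^L$ in place of $\perp^1$. Proposition~\ref{ab} together with Example~\ref{aa} (in dimension $d+1$, where the sphere has duplication index $\geq d+1$, which is more than the rank of $L$ requires) supplies the needed independent pairs.

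First I set up the ground-model data. Let $P$ be cofinally $\perp^L$-balanced, $p\in P$ a condition, and $\tau$ a $P$-name such that $p\Vdash\tau\subset\mathbb{R}^{d+1}$ is co-meager. Pick a set $x$ of ordinals defining $L$, $P$, $p$, $\tau$, together with a set $O\subset P$ which is $\perp^L$-balanced over $x$ and consists of conditions below $p$. Let $A=\{z\in\mathbb{R}^{d+1}\colon\exists q\in O\ q\Vdash\check z\in\tau\}\in\vd_x$. This set is non-meager: otherwise some condition in $O$ would force $\tau\subseteq\check A$ with $A$ meager. By the Baire property axiom there is an open ball $B$ with $B\setminus A$ meager. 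Let $\eps$ be below the radius of $B$; I will show every condition in $O$ forces that $\eps$ works.

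Now fix $\delta<\eps$ and $q\in O$. Use cofinal $\perp^L$-balance to find $y$ with $\delta,q\in\vd_{xy}$ and $O'\subset P$ which is $\perp^L$-balanced over $xy$, below $q$. Let $C$ be the sphere of radius $\delta$ around the origin. Choose $g\in B$ and $c\in C$ mutually Cohen-generic over $xy$ with $g+c\in B$. This is possible because the set of $c$ with $g+c\in B$ is a nonempty relatively open subset of $C$ for $g$ near the centre of $B$, using $\delta<\eps$. Then $z_0=g$ and $z_1=g+c$ are each Cohen-generic over $xy$, hence over $x$, hence lie in $A$. By the extension property they may be chosen with $y\forkindep[x]z_0$ and $y\forkindep[x]z_1$. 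Consequently, by Corollary~\ref{opencorollary}(3) applied to $O$, and since each condition in $O'$ lies in $O$, the sets $O_j=\{r\in O'\colon r\Vdash\check z_j\in\tau\}\in\vd_{xyz_j}$ are nonempty. For this I have to check that some condition of $O$ forces $z_j\in\tau$. This holds because $z_j\in A$, and all conditions in $O$ decide a statement with parameter in $\vd_{xz_j}$ uniformly only after passing to a balanced set over $xz_j$. Here I would instead argue directly: the witness $q'\in O\cap\vd_{xz_j}$, taken independent from $y$, is compatible with $q$ by balance of $O$, since $\perp^L$ contains $\forkindep$.

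The last step combines the two sets. Apply Proposition~\ref{ab} over $xy$ in $G=\mathbb{R}^{d+1}$ with $C$ the $\delta$-sphere; it has duplication index $\geq d+1\geq d\geq\mathrm{rank}(L)$ by Example~\ref{aa} (scaled by $\delta\in\vd_{xy}$). This gives $z_0\perp^L_{xy}z_1$. By Proposition~\ref{extensionproposition2} applied to $O'$, the intersection $O_0\cap O_1$ is nonempty. Any $r$ in it is below $q$ and forces $z_0,z_1\in\tau$ at distance $\delta$, which completes the density argument.

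The main obstacle is the middle step: showing that $O_j$ is nonempty. This requires an element of $O$ compatible with the given $q$ that forces $z_j\in\tau$. I expect to handle it by forming the $\perp^L$-independent pair $(y, z_j)$ over $x$, using transitivity of $\perp^L$ to absorb the witness's parameters, and then invoking balance of $O$. I also need to confirm that the duplication index hypothesis of Proposition~\ref{ab} with $n=d+1$ is compatible with the requirement rank $\leq n$ there.
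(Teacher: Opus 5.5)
Your outline is the paper's own: the same $x$, $O$, $A$ and ball, the auxiliary $y$ and $O'$, the generic pair $z_0=g$, $z_1=g+c$ with $z_0\perp^L_{xy}z_1$ from Example~\ref{aa} and Proposition~\ref{ab}, and Proposition~\ref{extensionproposition2} applied to $O'$. Taking $n=d+1\geq d\geq$ rank in Proposition~\ref{ab} is fine, and so is the scaling of the sphere.

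\textbf{The gap.} It is the step you flag yourself, and the fix you propose does not close it. Showing that a witness $q'\in O$ with $q'\Vdash\check z_j\in\tau$ is compatible with $q$ only produces some condition below $q$ forcing $\check z_j\in\tau$. It does not produce an element of $O'$. In general $O'$ is a small part of the cone below $q$; compare the sets $O_u$ in Example~\ref{ufexample1}.

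You do need $O_j=\{r\in O'\colon r\Vdash\check z_j\in\tau\}$ itself to be nonempty. The reason is that the balance of $O'$, via Proposition~\ref{extensionproposition2}, only applies to nonempty open subsets of $O'$. So what must be shown is that $O'$ meets $W_j=\{r\in O\colon r\Vdash\check z_j\in\tau\}$.

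The paper obtains this from Proposition~\ref{extensionproposition1}. First, $O'\subseteq O$, since $O$ is open and $q\in O$. Next, $O'\in\vd_{xy}$ and $W_j\in\vd_{xz_j}$ are nonempty open subsets of $O$; $W_j$ is nonempty because $z_j\in A$. Finally, $y\forkindep[x]z_j$, and $\perp^L$-balance implies plain balance. Hence $O'\cap W_j=O_j\neq 0$. In your own terms, compare the witness not with $q$ but with an element of $O'$; a common lower bound then lies in $O'$ because $O'$ is open.

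\textbf{Smaller points.}
\begin{enumerate}
\item The relation $y\forkindep[x]z_j$ is not something to arrange by the extension property. Re-choosing $z_j$ would endanger the mutual genericity you already fixed. It holds automatically by Fact~\ref{fact2}(2), since $z_j$ is Cohen-generic over $xy$.
\item There need not be any witness in $O\cap\vd_{xz_j}$ itself. You can only get one in $\vd_{xz_jw}$ for an auxiliary $w$ supplied by the extension property, and similarly on the $O'$ side, with transitivity preserving independence. That bookkeeping is exactly what the proof of Proposition~\ref{extensionproposition1} does, so citing it is the clean route.
\item Corollary~\ref{opencorollary}(3) plays no role here, since $z_j\notin\vd_x$.
\end{enumerate}
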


\begin{proof}
Let $p\in P$ be a condition and $\tau$ be a $P$-name such that $p\Vdash\tau\subset\mathbb{R}^{d+1}$ is nonmeager. Since $P$ is cofinally balanced, there is a set of ordinals such that there is an open set $O\subset P$ which is balanced over $x$ and consists of conditions stronger than $p$. Consider the set $B=\{u\in\mathbb{R}^{d+1}\colon\exists q\in O\ q\Vdash\check u\in\tau\}$. This must be a non-meager subset of $\mathbb{R}^{d+1}$, since any condition in $O$ forces $\tau\subseteq\check A$. Since every set has the Baire property, there is a basic open ball $U\subset\mathbb{R}^{d+1}$ such that $A$ is co-meager in $U$. Let $\eps>0$ be any real number smaller than the radius of $U$. I will show that whenever $q\in O$ is a condition and $0<\gd<\eps$ is a real number, there are points $u_0, u_1\in U$ of Euclidean distance exactly $\gd$ and a condition $r\leq q$ such that $r\Vdash\check u_0, \check u_1\in\tau$. This will conclude the proof.

So, let $q\in O$ and $0<\gd<\eps$ be given. Let $y$ be a set of ordinals such that $q, \delta\in\vd_{xy}$, $\vd_{xy}$ contains a $G_\delta$-subset of $A$ which is dense in $U$, and there is a set $O'$ $\perp^L$-balanced over $xy$ and consisting only of conditions stronger than $q$. Consider the sphere $\mathbb{S}^{d}_\delta$ of radius $\delta$. Let $u\in \mathbb{R}^{d+1}$ and $v\in\mathbb{S}^{d}_\delta$ be points mutually Cohen-generic over ${xy}$ such that $u$ is witin $\eps-\delta$-distance from the center of $U$. Let $u_0=u$ and $u_1=u_0+v\in U$. Now, Example~\ref{aa} and Proposition~\ref{ab} shos that $u_0\perp^L_{xy}u_1$ holds. Note that the points $u_0, u_1\in U$ are Cohen-generic over ${xy}$ and they are both in the set $A$.

Now, let $O'_0=\{r\in O'\colon r\Vdash\check u_0\in\tau\}$ and $O'_1=\{r\in O'\colon r\Vdash\check u_1\in\tau\}$. These are both open subsets of $O'$, and they belong to $\vd_{xyu_0}$ and $\vd_{xyu_1}$ respectively. The set $O'_0$ is nonempty, because it is the intersection of $O'\in\vd_{xy}$ and $\{r\in O\colon r\Vdash\check u_0\in\tau\}\in\vd_{xu_0}$; these two sets have nonempty intersection by Proposition~\ref{extensionproposition1} and the fact that $y\forkindep[x]u_0$ (Fact~\ref{fact2}(2)). The set $O'_1$ is nonempty for the same reason with subscript $1$. It follows from Proposition~\ref{extensionproposition2} that $O'_0\cap O'_1\neq 0$, and any condition $r$ in this intersection forces both $u_0, u_1$ into $\tau$ as desired.
\end{proof}

\begin{corollary}
Let $d>0$ be a natural number. Relative to an inaccessible cardinal, it is consistent with ZF+DC that the rational distance graph in $\mathbb{R}^d$ has countable chromatic number while the same graph in $\mathbb{R}^{d+1}$ does not.
\end{corollary}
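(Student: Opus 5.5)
The model is a generic extension of the Solovay model by a product of two posets. The first adds a countable coloring of the rational distance graph $\Gamma_d$ on $\mathbb{R}^d$; the second collapses nothing and is the trivial/identity part. Concretely, I take the poset $P_d$ of Example~\ref{algebraicexample}, which adds a countable $\Gamma_d$-coloring, and force with it over the geometric model.

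First, $P_d$ is $\perp^L$-balanced. Here $L$ is the lattice of algebraic subsets of $\mathbb{R}^d$ under intersection, as in Example~\ref{algd}. By Hilbert's basis theorem $L$ is Artinian, and it is distributive since we use union and intersection of algebraic sets. It has rank $d$ because irreducible chains of real algebraic sets have length at most $d$. In particular every balanced poset is cofinally $\perp^L$-balanced: given $p$ and $x$, enlarge $x$ so that $P_d$, $p$ and $L$ are in $\vd_x$, and apply the $\perp^L$-balance of $P_d$.

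Second, in the $P_d$ extension $\Gamma_d$ has countable chromatic number, by the design of $P_d$. I would verify this by the usual density argument: every point of $\mathbb{R}^d$ eventually enters the domain of some condition, which holds by the classification in Example~\ref{algebraicexample}(1). The neighborhood coloring then yields countably many colors via the countable basis of neighborhoods, and the ordering on $P_d$ guarantees that the union of the generic filter is a total neighborhood coloring. The sets $\{v\colon c(v)=O\}$ for basic open $O$ are independent in $\Gamma_d$. If $u,v$ are connected and share the color $O$, then $u\in O=c(v)$ and $v\in c(u)$, which contradicts the definition.

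Third, $\Gamma_{d+1}$ has uncountable chromatic number in the extension. Suppose $\mathbb{R}^{d+1}=\bigcup_n A_n$ with each $A_n$ $\Gamma_{d+1}$-independent. Since the extension adds no reals and every set of reals still has the Baire property there, some $A_n$ is nonmeager. The step I expect to be the main obstacle is exactly this: Theorem~\ref{altheorem} is stated for comeager sets, but its proof only uses non-meagerness of $\tau$. So I apply the argument of Theorem~\ref{altheorem} with $L$ of rank $d$ to the nonmeager $A_n$. This gives $\eps>0$ such that for each $\gd<\eps$ some pair of points of $A_n$ lies at distance $\gd$. Choosing $\gd$ rational yields a $\Gamma_{d+1}$-edge inside $A_n$, a contradiction. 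The inaccessible is used only to obtain the Solovay model satisfying the geometric axiomatization.
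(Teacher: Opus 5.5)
Your route is the paper's: force with $P_d$, get $\perp^L$-balance from Example~\ref{algd} for $L$ the lattice of algebraic subsets of $\mathbb{R}^d$, pick a non-meager piece of a countable partition of $\mathbb{R}^{d+1}$, and apply Theorem~\ref{altheorem} with a positive rational $\delta<\eps$. You are right that the proof of Theorem~\ref{altheorem} only uses that $\tau$ is forced to be non-meager. Your additional checks are correct: that balance gives cofinal balance, and that the color classes $\{v\colon c(v)=O\}$ are $\Gamma_d$-independent. Note that you mean ``every $\perp^L$-balanced poset'' there, and the trivial second product factor is superfluous.

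One justification in your third step is false and must be replaced. It is not true that every set of reals has the Baire property in the $P_d$-extension, and the generic coloring itself refutes it. Suppose all the countably many color classes had the Baire property. Then one class $B$ would be comeager in a nonempty open set $W\subseteq\mathbb{R}^d$. For a vector $t$ of small positive rational length, $B\cap(B-t)$ is comeager in the nonempty open set $W\cap(W-t)$. This gives $v, v+t\in B$, two points of the same color at rational distance.

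What you actually need is only that one of countably many sets covering $\mathbb{R}^{d+1}$ is non-meager. That is the Baire category theorem in the extension, plus the countable choice needed to see that countable unions of meager sets are meager. DC survives into the extension since $P_d$ is $\sigma$-closed. The Baire property enters the proof of Theorem~\ref{altheorem} only in the ground model, applied to the $\vd_x$ set of points forced into $\tau$ by conditions in $O$. That is exactly why non-meagerness of $\tau$ in the extension suffices.
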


\begin{proof}
Let $L$ be the Artinian distributive lattice of algebraic subsets of $\mathbb{R}^d$ with intersection and union. It is well-known that the rank of $L$ is $d$; the coloring poset $P_d$ for the rational distance graph in $\mathbb{R}^d$ is $\perp^L$-balanced by Example~\ref{algd}. In the extension by $P_d$, for every partition of $\mathbb{R}^{d+1}$ into countably many pieces, one of them must be non-meager, therefore contains all sufficiently small distances by Theorem~\ref{altheorem}, so contains a rational distance.
\end{proof}

\section{Arity}

I close the paper with an example of an independence relation of higher arity. The independence relation of this section has served in \cite[Chapter 13]{z:geometric} to rule out discontinuous homomorphisms between Polish groups in certain balanced extensions of the Solovay model. It has many other applications.

\subsection{$\forkindep^{<d}$ definition}

\begin{definition}
Let $d>2$ be a number. The $d+1$-ary relation $\forkindep^{<d}$ is defined by $\forkindep_x^{<d}\bar y$ if $\bar y$ is a $d$-tuple of ordinals such that each subtuple of $\bar y$ of length smaller than $d$ is $\forkindep$-independent over $x$.
\end{definition}

\begin{proposition}
Let $d>2$ be a number. $\perp^{<d}$ is an independence relation with symmetry.
\end{proposition}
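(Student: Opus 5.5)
We verify the clauses of the definition of an independence relation in turn: nontriviality, monotonicity, transitivity, and symmetry. Throughout we use the properties of the master relation $\forkindep$ recorded in the appendix (Fact~\ref{fact1} and the axiomatization). The key ones are: $\forkindep$-independence of a tuple over $x$ is invariant under permutation; it is inherited by subtuples; it is preserved when one entry is replaced by a set definable from $x$ and that entry; and the transitivity/concatenation rule: if $\bar w$ is independent over $x$ and $z\forkindep[x\bar w(i)]\bar w\restriction(\text{rest})$, then $\bar w$ with $\bar w(i)$ replaced by $\bar w(i)z$ is independent over $x$.

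Nontriviality is handled first. If every entry of $\bar z$ equals $x$, then every subtuple consists of copies of $x$. Such a tuple is independent over $x$ because each entry is in $\vd_x$; this is the nontriviality of $\forkindep$. Symmetry is immediate, since a permutation of $\bar y$ permutes the family of its proper subtuples, and $\forkindep$-independence of a subtuple is permutation invariant. Monotonicity: let $\bar z$ differ from $\bar y$ only at $i$, with $\bar z(i)\in\vd_{x\bar y(i)}$. Take a subtuple $s$ of $\bar z$ of length $<d$. If $s$ avoids index $i$, it is a subtuple of $\bar y$. Otherwise it is obtained from the corresponding subtuple of $\bar y$ by shrinking one entry to a set definable from $x$ and the old entry, and monotonicity of $\forkindep$ applies.

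Transitivity is the main step. Assume $\forkindep^{<d}_x\bar y$ and $\bar z(i)\forkindep[x\bar y(i)]\bar y\restriction d\setminus\{i\}$, with $\bar z$ agreeing with $\bar y$ off $i$. Take a proper subtuple of $\bar z$ indexed by $b\subsetneq d$. If $i\notin b$, there is nothing to prove. If $i\in b$, the subtuple $\bar y\restriction b$ is independent over $x$ by hypothesis. By monotonicity of $\forkindep$ in the right-hand side, the hypothesis yields $\bar z(i)\forkindep[x\bar y(i)]\bar y\restriction b\setminus\{i\}$. The concatenation rule of $\forkindep$ then gives that $\bar y\restriction b$ with entry $i$ replaced by $\bar y(i)\bar z(i)$ is independent over $x$. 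Shrinking that entry back to $\bar z(i)$ by monotonicity, $\bar z\restriction b$ is independent.

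There is a subtlety to flag here. The transitivity clause presumably intends $\bar z(i)$ to be $\bar y(i)$ concatenated with a new set, as in the recursion in the proof of Proposition~\ref{extensionproposition2}. In either reading, the monotonicity step above covers it.

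The main obstacle is precisely that the concatenation rule for $\forkindep$ is available for tuples, not only pairs. If the appendix states transitivity only for pairs, I would reduce to pairs as follows. View $\bar y\restriction b$ as the pair consisting of $\bar y(i)$ and the concatenation of its remaining entries. Apply pair transitivity to that pair. Then recover full tuple-independence of the subtuple using the characterization that a tuple is independent iff each entry is independent from the concatenation of the others over $x$, or iteratively in some enumeration.

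I expect no need for strong transitivity, and indeed the proposition does not claim it. That is consistent with the fact that independence of all proper subtuples does not propagate along $\forkindep^{<d}$ over the enlarged base.
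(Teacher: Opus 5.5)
Your proposal is correct and follows the same route as the paper, which just notes that symmetry is immediate from the definition and that monotonicity and transitivity follow from the basic properties of tuple independence in \cite{z:reloadedA}; you unpack exactly this subtuple by subtuple, and you also check nontriviality, which the paper leaves implicit. In your fallback reduction to pair transitivity, use the iterative characterization with the modified entry enumerated last: the remaining entries form an independent subtuple and pair transitivity handles the last step. The each-entry-versus-the-rest form would instead require additional base-monotonicity reasoning for the unmodified entries.
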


\begin{proof}
Symmetry follows immediately from the definition. Heredity and transitivity follow from the basic properties of independence of tuples as developed in \cite{z:reloadedA}.
\end{proof}

\begin{example}
Let $d\geq 2$ be a number. The poset $P_d$ of Example~\ref{algebraicexample} adding a coloring to the rational distance graph is cofinally $\perp^{<3}$-balanced.
\end{example}

\begin{proof}
By inspection of the proof of Example~\ref{algebraicexample}. The main point is that the rational distance graph has arity $2$. A more careful argument will show that any finite subset of $P_d$ in which any two elements have a common lower bound has a common lower bound, which implies that any balanced set is in fact $\forkindep^{<3}$-balanced.
\end{proof}

\subsection{$\forkindep^{<d}$ independent forcing extensions}

Given $d>2$, it is fairly easy to generate a wide variety of $\forkindep^{<d}$-independent $d$-tuples of generic extensions. This subsection contains only the most useful option in group-theoretic context.

\begin{example}
\label{groupexample}
Let $G$ be a group and $d>2$ be a number. If $\bar g\in G^d$ is a tuple, write $\prod\bar g$ for the product of entries of $\bar g$ in the order of increasing indexation. If $h\in G$, write $C^d_h=\{\bar g\in G^d\colon\prod\bar g=h\}$ ; this is a closed subset of $G^d$, therefore Polish in the inherited topology. If $x$ is a set of ordinals such that $G, h\in\vd_x$ and $\bar g\in C^d_h$ is a tuple Cohen-generic over $x$, then $\forkindep_x^{<d}\bar g$ holds.
\end{example}

\begin{proof}
For simplicity of notation, I will show that $\bar g\restriction d-1\in G^{d-1}$ is a tuple Cohen-generic over $x$; the case of other $d-1$-subtuples is analogous. 

Note that the function $\pi\colon G^{d-1}\to G$ defined by $\pi(\bar k)=(\prod\bar k)^{-1}h$ is continuous, and the set $C^d_h\subset G^d$ is exactly the graph of $\pi$. Thus, the projection from $C^d_h$ to $G^{d-1}$ is continuous and open; in consequence, the projection of Cohen-generic $d$-tuple in $C^d_h$ is a Cohen-generic $d-1$-tuple in $G^{d-1}$ as desired.
\end{proof}

\subsection{$\forkindep^{<d}$ consistency results}

The most prominent use of $\forkindep^{<d}$-balance is elimination of discontinuous homomorphisms between Polish groups from the generic extension through the following theorem.

\begin{theorem}
\label{grouptheorem}
Let $d>2$ be a number. $P$ be a cofinally $\perp^{<d}$-balanced poset. In the $P$-extension, whenever $G$ is a Polish group and $\bar D$ is a $d$-tuple of non-meager subsets of $G$, $\bar D!$ has nonempty interior.
\end{theorem}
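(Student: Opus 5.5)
Work by contradiction, following the template of the earlier consistency theorems. Suppose $p\in P$ and $\bar\tau$ is a $d$-tuple of $P$-names such that $p$ forces that $G$ is a Polish group (WLOG $G$ is in the ground model, since cofinally balanced forcing adds no new reals) and each $\bar\tau(i)$ is a non-meager subset of $G$. Use cofinal $\perp^{<d}$-balance to find a set $x$ of ordinals with $G,p,\bar\tau\in\vd_x$ and an open set $O\subset P$ which is $\perp^{<d}$-balanced over $x$ and consists of conditions below $p$. For each $i\in d$ let $A_i=\{g\in G\colon \exists q\in O\ q\Vdash\check g\in\bar\tau(i)\}\in\vd_x$. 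Each condition in $O$ forces $\bar\tau(i)\subseteq\check A_i$, so $A_i$ is non-meager. By the Baire property axiom, there are basic open sets $U_i\subset G$ with $A_i$ comeager in $U_i$. The goal is to show that every condition in $O$ forces $\bar\tau!\supseteq W$ for the nonempty open set $W$ defined next.

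Choose $W\subseteq\bar U!$ with $W$ open and nonempty. It exists because $\bar g\mapsto\bar g!$ is a continuous open map $G^d\to G$: fixing all but one coordinate, it is a homeomorphism in the remaining coordinate. Now fix $q\in O$ and $h\in W$; the task is to find $r\leq q$ forcing $\check h\in\bar\tau!$. As in the proof of Theorem~\ref{altheorem}, enlarge to a set $y$ of ordinals with $h,q\in\vd_{xy}$ and an $O'\subseteq O$, below $q$, which is $\perp^{<d}$-balanced over $xy$. Consider $C=\{\bar g\in G^d\colon \bar g!=h\}$. Its relatively open subset $C\cap\prod\bar U$ is nonempty. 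Pick $\bar g$ in it Cohen-generic over $xy$. By Example~\ref{groupexample}, adapted to the alternating product (substitute $\bar g(i)^{-1}$ in the odd coordinates, which is a homeomorphism of $G^d$ carrying $C$ to some $C^d_{h'}$), we get $\forkindep^{<d}_{xy}\bar g$. Moreover, each single coordinate $\bar g(i)$ is Cohen-generic over $xy$, hence over $x$, in $U_i$. So $\bar g(i)\in A_i$, and $y\forkindep[x]\bar g(i)$ holds by Fact~\ref{fact2}.

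Next set $O'_i=\{r\in O'\colon r\Vdash\check{\bar g}(i)\in\bar\tau(i)\}\in\vd_{xy\bar g(i)}$. Each $O'_i$ is nonempty: $\{r\in O\colon r\Vdash\check{\bar g}(i)\in\bar\tau(i)\}$ is a nonempty open subset of $O$ in $\vd_{x\bar g(i)}$, and $O'$ is a nonempty open subset of $O$ in $\vd_{xy}$. These two sets meet by Proposition~\ref{extensionproposition1} applied to $O$, which is balanced over $x$ because $\forkindep$-independent pairs are $\perp^{<d}$-independent. Finally apply Proposition~\ref{extensionproposition2} to $O'$ over $xy$ with the $d$-tuple $\bar g$. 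Since $\forkindep^{<d}_{xy}\bar g$, the intersection $\bigcap_iO'_i$ is nonempty, and any $r$ in it forces $h=\bar g!\in\bar\tau!$. By density below $q$, every condition in $O$ forces $W\subseteq\bar\tau!$, which is the desired contradiction.

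The main obstacle is the genericity bookkeeping in the second paragraph. One must check that a Cohen-generic point of the closed set $C$ exists with all coordinates in the $U_i$, that each single coordinate is Cohen-generic in $G$, and that any $d-1$ coordinates are $\forkindep_{xy}$-independent. The route is that the projection of $C$ onto any $d-1$ coordinates is a homeomorphism onto $G^{d-1}$, being the graph of a continuous function, and a Cohen-generic point of $G^{d-1}$ has mutually generic coordinates. A second point to verify is that the relation in Example~\ref{groupexample} uses $d>2$; for $d-1$-subtuples this gives exactly $\forkindep^{<d}$.
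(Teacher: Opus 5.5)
Your proposal is correct and follows essentially the same route as the paper: you find $x$ and a balanced $O$, pick open sets $U_i$ on which the sets $A_i$ are comeager, and for a given $q\in O$ and target point take a point Cohen-generic over $xy$ in the closed set of $d$-tuples with the prescribed product. You then use Fact~\ref{fact2} and the balance of $O$ to make each $O'_i$ nonempty, and conclude by Example~\ref{groupexample} and Proposition~\ref{extensionproposition2}.

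The only difference is that you handle the alternating product $\bar g!$ from the statement, whereas the paper's written proof uses the plain product and $C^d_g$. Your substitution works, but inverting the odd coordinates must be combined with reversing the order of the coordinates (alternatively, invert the even ones and use $h^{-1}$). In any case, your direct observation that $C$ is the graph of a continuous function of any $d-1$ coordinates already suffices.
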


\begin{proof}
Let $p\in P$ be a condition and $\tau_i$ for $i\in d$ be $P$-names such that $p\Vdash\tau_i\subseteq G$ is a non-meager subset. Use the balance assumption on $P$ to find a set $x$ of ordinals such that $p$ and $\tau_i$ for $i\in d$ belong to $\vd_x$ and there is a set $O\subset P$ which is balanced over $x$ and consists only of conditions stronger than $p$. For each $i\in d$ let $A_i=\{g\in G\colon\exists q\in O\colon q\Vdash\check g\in\tau_i$. Since any condition in the set $O$ forces $\tau_i\subseteq \check B_i$, it must be the case that $A_i$ is a nonmeager set. By the Baire axiom, there is a nonempty basic open set $B_i\subseteq G$ such that $A_i$ is co-meager in $B_i$. Let $V=\prod_iB_i$. $V\subset G$ is an open set, and it will be enough to show that every condition in $O$ forces $V\subseteq\prod_i\tau_i$.

To this end, let $q\in O$ be any condition and $g\in V$ be any element; I must produce a condition $r\leq q$ and elements $h_i\in G$ for $i\in d$ such that $\prod_ih_i=g$ and $r\Vdash\forall i\in d\ \check h_i\in\tau_i$. To this end, find a set $y$ of ordinals such that $q, g\in\vd_{xy}$, and such that there is an open set $O'\subset P$ which is $\perp^{<d}$-balanced over $xy$ and consists of conditions stronger than $q$. In $\vd_{xy}$, consider the set $C^d_g\subset G^d$ of Example~\ref{groupexample}, and find a sequence $\langle h_i\colon i\in d\rangle$ Cohen-generic over ${xy}$ in the set $\prod_iU_i$. For each $i\in d$, the point $h_i\in O_i$ is Cohen-generic over ${xy}$, and so over $x$. There is a string of consequences to this observation:

\begin{itemize}
\item by the choice of the open neighborhood $B_i$, $h_i\in A_i$;
\item by Fact~\ref{fact2}(2), $y\forkindep_xh_i$;
\item by the balance assumption on the set $O$, the set $O_i=\{r\in O'\colon r\Vdash\check h_i\in\tau_i\}\in\vd_{xyh_i}$ is nonempty.
\end{itemize}

\noindent Now, the sequence $\langle h_i\colon i\in d \rangle$ is $\forkindep^{<d}$-independent over $xy$ by Example~\ref{groupexample}. By the balance assumption on the set $O'$, the intersection $\bigcap_{i\in d}O_i$ is nonempty. Any condition $r$ in this intersection has the required properties.
\end{proof}

\begin{corollary}
\label{groupcorollary}
In cofinally $\perp^{<d}$-balanced extensions, all homomorphisms between Polish groups are continuous.
\end{corollary}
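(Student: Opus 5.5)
The corollary reduces to Theorem~\ref{grouptheorem} through the classical Pettis argument. I will work in the $P$-extension, where $P$ is cofinally $\perp^{<d}$-balanced, and fix $d=3$, which is allowed since the theorem holds for every $d>2$. Let $\phi\colon G\to H$ be a homomorphism between Polish groups. It suffices to prove continuity at the unit. So fix an open neighborhood $W\subseteq H$ of the unit, and aim to find an open neighborhood $V\subseteq G$ of the unit with $\phi''V\subseteq W$.

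First choose an open neighborhood $W_0$ of the unit in $H$ that is symmetric, $W_0=W_0^{-1}$, and satisfies $W_0^{4}\subseteq W$. Since $H$ is separable, countably many left translates $h_nW_0$ cover $H$. Then the sets $D_n=\phi^{-1}(h_nW_0)$ cover $G$. The extension satisfies DC, because $P$ is cofinally balanced and so by the earlier proposition adds no new well-ordered sequences of ground-model elements. The Baire category theorem therefore holds in the extension, and some $D_n$ is non-meager; write $D=D_n$.

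Next set $\bar D=\langle D,D,D\rangle$ and apply Theorem~\ref{grouptheorem}. This gives that $\bar D!$ has nonempty interior, where $\bar D!$ consists of products $g_2 g_1^{-1} g_0$ with $g_i\in D$ (up to the sign convention in the definition of the alternating factorial). If $g_0,g_1,g_2\in D$, then $\phi(g_2g_1^{-1}g_0)\in h_nW_0W_0^{-1}h_n^{-1}h_nW_0=h_nW_0^3$. So there is a nonempty open set $V_0\subseteq G$ with $\phi''V_0\subseteq h_nW_0^3$. Pick $k\in V_0$ and let $V=k^{-1}V_0$, an open neighborhood of the unit. For $g\in V$ we get $\phi(g)=\phi(k)^{-1}\phi(kg)\in W_0^{-3}h_n^{-1}h_nW_0^3=W_0^{6}$. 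Adjusting the choice of $W_0$ so that $W_0^{6}\subseteq W$ finishes the argument.

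The main obstacle is bookkeeping rather than a real difficulty. I need to check that Theorem~\ref{grouptheorem} applies to groups $G$ that may be new in the extension. That theorem is stated for arbitrary Polish groups in the $P$-extension, and no new reals are added, so $G$ is in any case coded in the ground model. I also need to check the exact alternating pattern of $\bar D!$ so that the exponent counts above come out right. If the signs differ, the same computation with a different power of $W_0$ works.
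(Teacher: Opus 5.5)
Your argument is the same Pettis-style reduction as the paper's: get a non-meager set with small image, use Theorem~\ref{grouptheorem} to find an open set inside its product, and translate that open set to the unit. The one variation is where non-meagerness comes from. The paper shows that $\pi^{-1}V$ itself is non-meager, because countably many translates of it cover $G$. You instead take a non-meager piece $D=\phi^{-1}(h_nW_0)$ of a countable cover and let the alternating signs in $\bar D!$ cancel the $h_n$. That works and avoids having to locate translating elements inside $G$. It does depend on the alternating pattern, though: with plain products you would have to put $D^{-1}$ in the odd slots.

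Two steps need repair.

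\emph{The choice $d=3$.} You may not fix $d=3$. The hypothesis is that $P$ is cofinally $\perp^{<d}$-balanced for the given $d$, so Theorem~\ref{grouptheorem} is available only for that $d$. Nothing in the definitions makes cofinal $\perp^{<d}$-balance imply cofinal $\perp^{<3}$-balance. For $d\geq 4$, any three entries of a $\forkindep^{<d}$-independent tuple are fully independent, so $\perp^{<d}$-balance says nothing about triples that are merely pairwise independent. The fix costs nothing:
\begin{itemize}
\item apply the theorem to $\langle D,\dots,D\rangle$ of length $d$;
\item the alternating signs give $\phi''(\bar D!)\subseteq h_nW_0^d$ for odd $d$, and $\phi''(\bar D!)\subseteq W_0^d$ for even $d$;
\item the translated neighborhood then maps into $W_0^{2d}$, so choose $W_0$ symmetric with $W_0^{2d}\subseteq W$.
\end{itemize}

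\emph{The DC step.} The proposition that cofinally balanced forcing adds no new well-ordered sequences of ground-model elements does not give DC in the extension. It says that such sequences, when they exist, are old. It does not produce sequences through relations on new sets. What you actually use is countable choice for sets of reals in the extension, in order to pick meagerness witnesses for the $D_n$. This does hold in balanced extensions of the Solovay model, but it is a separate fact you must import. The paper's proof, despite saying ``abstractly in ZF'', relies on the same kind of choice when it picks countably many translates covering $G$.
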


\begin{proof}
This is a simple descriptive set theoretic argument showing that the conclusion of Theorem~\ref{grouptheorem} implies the conclusion of Corollary~\ref{groupcorollary} abstractly in ZF.
Suppose $G, H$ are Polish groups, $\pi\colon G\to H$ is a homomorphism, and the conclusion of Theorem~\ref{grouptheorem} holds for nonmeager subsets of $G$. I will argue that $\pi$ is continuous.

First of all, observe that for every open neighborhood $V\subseteq H$ of the unit, the preimage $\pi^{-1}V$ is non-meager in $G$ as $G$ can be covered by countably many translates of it. Second, for every open neighborhood $V\subseteq H$ of the unit, there is a nonempty set $U\subseteq G$ such that $\pi''U\subseteq V$. To see this, let $V'\subseteq H$ be an open neighborhood of $1$ such that $(V')^d\subset V'$. The set $U'=\pi^{-1}V'\subseteq G$ is non-meager, so by the conclusion of Theorem~\ref{grouptheorem}, $(U')^d$ contains a nonempty open set $U\subseteq G$, and then $\pi''U\subseteq V$ must hold as $\pi$ is a homomorphism.

Finally, let me argue for the continuity of $\pi$. Let $V\subseteq H$ be an open set amd $g\in G$ is an element such that $\pi(g)=h\in V$ holds; I must find an open neighborhood $U$ fo $G$ such that $\pi''U\subseteq V$ holds. To this end, find an open neighborhood $V'\subseteq H$ of the unit such that $h+V'-V'\subset V$, use the conclusion of the previous paragraph to find a nonempty open neighborhood $U'\subseteq G$ such that $\pi''U'\subseteq V'$, and let $U=g+U'-U'$. This is an open neighborhood of $g$, and $\pi''U\subseteq V$ must hold as $\pi$ is a homomorphism. 
\end{proof}

\begin{corollary}
It is consistent with ZF+DC that for every $n\in\gw$ the space $\mathbb{R}^n$ can be decomposed into countably many pieces, neither of which contains a pair of points at a rational distance, and all homomorphisms between Polish groups are continuous.
\end{corollary}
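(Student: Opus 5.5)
The plan is to force over the Solovay model with a single poset that simultaneously adds countable colorings of all the rational distance graphs $\Gamma_n$, $n\in\gw$. Then I show that this poset is cofinally $\perp^{<3}$-balanced. The continuity of homomorphisms in the extension will follow from Corollary~\ref{groupcorollary} with $d=3$. The countable colorings are present by genericity.

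\textbf{Step 1: the poset.} Let $P$ be the finite support product (or equivalently the full product with countable support, whichever is easier to balance) of the posets $P_n$ of Example~\ref{algebraicexample} for $n\geq 1$. The case $n=1$ may be included or treated trivially. I will prefer the full countable product $\prod_nP_n$ ordered coordinatewise. A generic filter then yields for each $n$ a neighborhood coloring of all of $\mathbb{R}^n$ by a density argument, using the Claim in Example~\ref{algebraicexample} to extend conditions to new points. Such a neighborhood coloring gives a countable coloring of $\Gamma_n$: color $v$ by the basic open set $c(v)$. If two $\Gamma_n$-connected points $u,v$ received the same color $c(u)=c(v)$, then $u\in c(v)$ and $v\in c(u)$, which is excluded. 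Since $P$ is balanced, Proposition~2.4 shows that no new reals are added. So $\mathbb{R}^n$ in the extension is the ground model $\mathbb{R}^n$ and the coloring works there.

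\textbf{Step 2: balance.} Fix $x$ with $P\in\vd_x$ and a condition $p\in\vd_x$. By the proof of Example~\ref{algebraicexample}(1), carried out coordinatewise inside $\hvd_x$ using its canonical well-ordering, I pick for each $n$ a neighborhood coloring $q_n\in\vd_x$ with domain $\mathbb{R}^n\cap\vd_x$ and $q_n\leq p(n)$, uniformly in $n$, so that $\bar q=\langle q_n\colon n\in\gw\rangle\in\vd_x$. I let $O=\{r\in P\colon \forall n\ r(n)\leq q_n\}$, an open set in $\vd_x$.

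\textbf{Step 3: $\perp^{<3}$-balance of $O$.} Given a $\forkindep^{<3}$-independent triple $y_0,y_1,y_2$ over $x$ and $r_i\in O\cap\vd_{xy_i}$, each pair is $\forkindep_x$-independent. Coordinatewise, the amalgamation argument of Example~\ref{algebraicexample} shows that the first item there gives a common lower bound of each pair $r_i(n),r_j(n)$. This uses only pairwise independence, via Proposition~\ref{alg}/Example~\ref{algd} for the algebraic interpolation. The remaining point, flagged in the proof of the example for $P_d$, is that pairwise compatibility of finitely many conditions in $P_n$ implies a common lower bound. I would prove it by repeating the construction of $r$ in Example~\ref{algebraicexample}. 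Take a countable real closed field $C$ containing all three supports. Keep the union of the three conditions, and color each new point of $C^n$ by a neighborhood avoiding the $\Gamma_n$-neighbors in the finitely many old domains, which is possible by the Claim. This gives the extension property for the new points. The compatibility requirement between old points in different domains is a binary condition, which is exactly what pairwise compatibility provides. The union must be a function; this holds because the domains intersect in $\vd_x$-points where all conditions agree with $q_n$, using $\vd_{xy_i}\cap\vd_{xy_j}=\vd_x$. Doing this in every coordinate gives the common lower bound in $P$. Countably many coordinates cause no trouble, since the choices can be made by DC or canonically from an enumeration of $C$.

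\textbf{Main obstacle.} The main difficulty is the verification in Step 3 that the binary ordering condition $q\leq p$ of $P_n$ is preserved under three-way amalgamation. Specifically, for $v\in\dom(r_j)\setminus\dom(r_i)$ and $u\in\dom(r_i)$, one needs $u\notin r_j(v)$ whenever $u,v$ are connected. This involves only the pair $(i,j)$, so I expect the pairwise argument of Example~\ref{algebraicexample} to settle it verbatim. I would check that carefully first, before concluding via Corollary~\ref{groupcorollary}.
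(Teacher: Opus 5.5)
Your proposal is correct and follows the route the paper intends. The paper states this corollary without proof, but it is meant to follow from three ingredients: the coloring posets $P_n$ are $\perp^{<3}$-balanced via the remark that pairwise compatible finite sets of conditions have a common lower bound; the full countable product, with $\bar q\in\vd_x$ chosen uniformly, inherits this balance; and Corollary~\ref{groupcorollary} applies with $d=3$. Your three-way amalgamation and your pairwise-compatibility argument supply exactly these details (the only slip is calling the finite-support and full products equivalent, which is harmless since you commit to the full product).
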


\section{Appendix: axiomatization of the Solovay model}
\label{axiomatizationsection}

For reference reasons, this appendix records the axiomatization of the Solovay model as isolated in \cite{z:reloadedA}.

\begin{definition}
The \emph{geometric axiomatization of the Solovay model} consists of ZF+DC and the following axioms: ZF+DC, 

\begin{enumerate}
\item (definability) every set is definable from a set of ordinals;
\item (inaccessibility) there is no injective $\gw_1$-sequence of reals;
\item (Baire axiom) every set of reals has the Baire property;
\item (independence) there is a class ternary \emph{master independence relation} $\forkindep$ on sets of ordinals, definable from no parameters, with the following properties:

\begin{enumerate}
	\item (nontriviality) for every set $x$ of ordinals, $x\forkindep[x] x$ holds;
\item (symmetry) $y_0\forkindep[x]y_1$ implies $y_1\forkindep[x]y_0$;
\item (monotonicity) $y_0\forkindep[x]y_1$ and $y'_0\in\vd_{xy_0}$ implies $y'_0\forkindep[x]y_1$;
\item (transitivity) the conjunction $y_0\forkindep[x]y_1$ and $z\forkindep[xy_0]y_1$ implies $y_0z\forkindep[x]y_1$;
\item (extension) if $x$ is a set of ordinals and $A\in\vd_x$ is a nonempty set of sets of ordinals, thenthere are $y_0, y_1\in A$ such that $y_0\forkindep[x]y_1$;
\item (product) if $A\in\vd_x$ is a set, $y_0\forkindep[x]y_1$, and $\langle y_0, y_1\rangle\in A$, then there are $\vd_x$-sets $B_0, B_1$ containing $y_0, y_1$ respectively such that for all $y'_0\in B_0$ and $y'_1\in B_1$, $y'_0\forkindep[x]y'_1$ implies $\langle y'_0, y'_1\rangle\in A$.
\end{enumerate}
\end{enumerate}
\end{definition}

\noindent In this paper, the definability axiom is used at every turn. The inaccessibility axiom guarantees existence of points Cohen-generic over $x$ where $x$ is a set of ordinals. The Baire axiom is used to treat Cohen-generic points, even though in in \cite{z:reloadedA} it is applied much more broadly. Independence is used at every turn. \cite[Section 6]{z:reloadedA} also considers the Gandy--Harrington axiom, for which I have no use in the current paper. The main corollaries of the independence axioms are used at every turn:

\begin{fact}
\label{fact1}
Let $x$ be a set of ordinals.

\begin{enumerate}
\item If $y$ is a set of ordinals and $A\in\vd_x$ is a set of sets of ordinals, then there is $z\in A$ such that $y\forkindep[x]z$;
\item if $A$ is a $\vd_x$-set then $A$ is well-orderable if and only if $A\subset\vd_x$;
\item if $y_0\forkindep[x]y_1$ then $\vd_{xy_0}\cap\vd_{xy_1}=\vd_x$;
\item if $y_0\forkindep[x]y_1$ and $A_0\in\vd_{xy_0}$ and $A_1\in\vd_{xy_1}$ are disjoint subsets of $\vd_x$, then there is a set $B\in\vd_x$ such that $A_0\subseteq B$ and $A_1\cap B=0$.
\end{enumerate}
\end{fact}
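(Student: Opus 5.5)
We prove the four items of Fact~\ref{fact1} in order, directly from the axioms; (3) and (4) are the substantive ones.

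For (1), let $A\in\vd_x$ be a nonempty set of sets of ordinals and $y$ any set of ordinals. Consider the $\vd_x$-set $A'=\{\langle y',z\rangle\colon y'$ is a set of ordinals of the same type as $y$, $z\in A\}$; to stay inside a set, bound $y'$ by a suitable ordinal. Extension gives independent pairs in a $\vd_x$-set, but we want to fix $y$ itself, so instead we use transitivity and symmetry. Set $A_1=A$ and use extension inside the $\vd_{xy}$-set $A$: there are $z_0,z_1\in A$ with $z_0\forkindep[xy]z_1$. This is independence over $xy$, not over $x$, so it does not suffice.

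The better route is the product axiom. Apply extension to the $\vd_x$-set $\{\langle y',z\rangle\colon y'\in S, z\in A\}$, where $S\in\vd_x$ is a set of sets of ordinals containing $y$, in product form. Pulling back independence of the pair to independence of coordinates by monotonicity, we obtain some $y'\forkindep[x]z$ with $z\in A$. Then we move $y'$ to $y$ by a homogeneity argument. I expect this transfer to be the main obstacle for (1). If it resists, I would derive (1) from extension via transitivity applied to $x\forkindep[x]x$ and the nontriviality axiom.

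For (3), suppose $y_0\forkindep[x]y_1$ and $a\in\vd_{xy_0}\cap\vd_{xy_1}$. By induction on rank, it suffices to show that every subset $a\subseteq\vd_x$ lying in both models is in $\vd_x$, since $\in$-minimal counterexamples are of this form. Write $a=F_0(x,y_0)=F_1(x,y_1)$ for $\vd$-functions $F_0,F_1$. Apply the product axiom to the $\vd_x$-set $\{\langle y_0',y_1'\rangle\colon F_0(x,y_0')=F_1(x,y_1')\}$ to obtain $B_0\ni y_0$ and $B_1\ni y_1$ in $\vd_x$. Next use (1) over $xy_1$ to pick $y_0'\in B_0$ with $y_0'\forkindep[x]y_1y_0$, arranged via transitivity. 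Then $F_0(x,y_0')=a=F_0(x,y_0)$ for every such $y_0'$. Hence $a$ is the common value of $F_0(x,\cdot)$ on a $\vd_x$-definable family of independent witnesses, and we can define $a$ in $\vd_x$ as
\[
\{b\in\vd_x\colon\text{for all independent }y_0',y_0''\in B_0,\ b\in F_0(x,y_0')\}.
\]
The key step is showing this set equals $a$: any two elements of $B_0$ independent from each other are each joined to $y_1$ by an independent chain, which forces agreement.

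For (2), the inclusion $A\subset\vd_x$ gives well-orderability via the canonical well-order of $\vd_x$. Conversely, if $A\in\vd_x$ is well-orderable, fix an injection of $A$ into an ordinal and let $w$ code it. Any $a\in A$ lies in $\vd_{xw}$. Choosing by (1) a copy of $w$ independent of $w$ over $x$, we get $a\in\vd_{xw}\cap\vd_{xw'}=\vd_x$ by (3).

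For (4), define $B=\{b\in\vd_x\colon$ some $y_0'$ in a product-axiom set $B_0$ has $b\in A_0(y_0')\}$. Disjointness of $B$ from $A_1$ follows from the product axiom together with (1) and (3) applied to the relation ``$A_0(y_0')\cap A_1(y_1')=0$''.
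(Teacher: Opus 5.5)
\textbf{The central gap is item (1).} It is left unproved, and your (2)--(4) all invoke it. (The paper does not prove this fact; it quotes it from the previous installment, so the argument has to come from the listed axioms.) The extension axiom as listed only yields an independent pair \emph{inside} a $\vd_x$ set. Getting independence from a \emph{prescribed} $y$ is the whole content of (1).

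Neither of your routes reaches it. No homogeneity principle is among the axioms, so ``moving $y'$ to $y$'' has nothing to rest on. Your fallback also cannot work: nontriviality and transitivity only combine independences already in hand, and never produce one involving an arbitrary $y$.

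The missing idea is to apply extension to the set of counterexamples. This is legitimate because $\forkindep$ is definable without parameters.
\begin{enumerate}
\item Fix an ordinal $\alpha$ with $y\subseteq\alpha$. Let $D$ be the set of all $w\subseteq\alpha$ such that no $z\in A$ satisfies $w\forkindep[x]z$. Then $D\in\vd_x$.
\item Suppose $y\in D$. Apply extension to $D\times A$, with pairs coded as sets of ordinals. This gives $\langle w_0,z_0\rangle\forkindep[x]\langle w_1,z_1\rangle$ in $D\times A$.
\item Two applications each of monotonicity and symmetry give $w_0\forkindep[x]z_1$. This contradicts $w_0\in D$.
\end{enumerate}
So your instinct to apply extension to some $S\times A$ with $y\in S$ was right; the point is to take $S=D$.

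\textbf{Item (3).} Your reduction to subsets of $\vd_x$ via $\in$-minimal counterexamples is invalid. The classes $\vd_{xy_0}$ and $\vd_{xy_1}$ are not transitive, so the elements of a minimal counterexample need not lie in either class. That trick works for $\hvd$, as in the proof of Proposition~\ref{0prop}, but not for $\vd$. Your displayed definition of $a$ and the ``independent chain'' step are therefore unjustified.

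The reduction is also unnecessary. With (1) available, take \emph{every} $y_0'\in B_0$ and choose $y_1'\in B_1$ with $y_1'\forkindep[x]\langle y_0,y_0'\rangle$. Monotonicity and symmetry make $y_1'$ independent over $x$ of both $y_0$ and $y_0'$. The product property then gives $F_0(x,y_0')=F_1(x,y_1')=F_0(x,y_0)=a$. Hence $F_0(x,\cdot)$ is constant on $B_0$, and $a$, being its value, is in $\vd_x$. No restriction to independent elements of $B_0$ is needed, and neither is $a\subseteq\vd_x$.

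\textbf{Item (4).} The one step that matters is not stated. Suppose $b\in B\cap A_1$. Because $b\in\vd_x$, the set $\{y_0'\in B_0\colon b\in F_0(x,y_0')\}$ is a nonempty $\vd_x$ set. So (1) supplies a witness $y_0'$ in it with $y_0'\forkindep[x]y_1$. The product property then forces $F_0(x,y_0')\cap F_1(x,y_1)=0$, a contradiction. Item (3) plays no role here.

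\textbf{Item (2).} This is fine once (1) and (3) are available. The ``copy'' $w'$ should be drawn from the $\vd_x$ set of all sets of ordinals that define, by a fixed formula, an injection of $A$ into an ordinal.
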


\noindent In the present paper, Cohen-generic points of Polish spaces play a key role. The phrase ``$\vd_x$ contains the Polish space $Y$'' means that $\vd_x$ contains the set $Y$ together with some countable basis of its topology and its enumeration. The phrase ``$y$ is an element of $Y$ Cohen-generic over $x$'' means that $y$ belongs to no meager subset of $Y$ which belongs to $\vd_x$. The following fact ties these two notions together.

\begin{fact}
\label{fact2}
Let $x$ be a set of ordinals and $Y$ be a Polish space in $\vd_x$.

\begin{enumerate}
\item every meager $\vd_x$-subset of $Y$ is a subset of the union of all closed nowhere dense subsets of $Y$ in $\vd_x$;
\item if $z$ is a set of ordinals and $y\in Y$ is an element Cohen-generic over ${xz}$ then $y\forkindep_xz$.
\end{enumerate}
\end{fact}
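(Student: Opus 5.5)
The statement to prove is Fact~\ref{fact2}: (1) every meager $\vd_x$-subset of $Y$ is covered by the closed nowhere dense sets in $\vd_x$, and (2) Cohen-genericity over $xz$ gives $y\forkindep_x z$. I would handle the two items separately.

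For (1), let $M\in\vd_x$ be a meager subset of $Y$. In ZF+DC, meagerness gives \emph{some} sequence of closed nowhere dense sets covering $M$, but not necessarily one in $\vd_x$. Fix the enumeration of the countable basis in $\vd_x$. The set of all countable sequences of closed nowhere dense sets whose union contains $M$ is nonempty and lies in $\vd_x$. Each closed nowhere dense set is coded by a real, namely the set of basic open sets disjoint from it, so this is a nonempty $\vd_x$ set of (codes of) sets of ordinals.

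Two natural routes then present themselves:
\begin{itemize}
\item Apply the inaccessibility axiom: $\hvd_x\cap\cantor$ is countable, so the meager set $M$ is witnessed by a covering coded in $\hvd_x$. This needs a Solovay-type fact not obviously among the axioms.
\item Cleaner: use extension (Fact~\ref{fact1}(1)) with $y=$ a suitably chosen set to pick codes $c_0\forkindep_x c_1$ of such coverings.
\end{itemize}
The main point, and the main obstacle, is to show that each point $m\in M$ lies in a closed nowhere dense set from $\vd_x$. Let $m\in M$ be coded as a real. By Fact~\ref{fact1}(1), pick a covering code $c\in A$ with $m\forkindep_x c$. Then $m$ lies in some piece $F_n$ of $c$. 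Apply the product axiom to the $\vd_x$ relation ``$m'\in F_n(c')$'' to get $\vd_x$ sets $B_0\ni m$ and $B_1\ni c$ such that every $m'\in B_0$ independent from some $c'\in B_1$ satisfies $m'\in F_n(c')$. Then set $F=\bigcap_{c'\in B_1}F_n(c')$, an intersection of closed sets lying in $\vd_x$, which is closed. It is nowhere dense, being contained in $F_n(c)$, and it is in $\vd_x$. To see $m\in F$, for each $c'\in B_1$ choose, via extension over $xm$, some $c''$ with $F_n(c'')=F_n(c')$ independent from $m$. I expect this adjustment to be the delicate step. A fallback is to intersect only over those $c'$ independent from $m$, using that Fact~\ref{fact1}(3) forces the relevant set into $\vd_x$.

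For (2), let $y$ be Cohen-generic over $xz$. By extension, pick $y'\in Y$ Cohen-generic over $x$ (possible since the meager $\vd_x$ sets are covered by countably many in $\hvd_x$, using inaccessibility) with $y'\forkindep_x z$. It then suffices to argue by contradiction: the set $\{y''\colon y''\not\forkindep_x z\}$ is in $\vd_{xz}$. Using the product axiom and the Baire axiom, one shows this set is meager; otherwise it is comeager in some basic open set and contains points independent from $z$, by extension applied inside that open set. Since $y$ avoids all meager $\vd_{xz}$ sets, $y\forkindep_x z$ follows.
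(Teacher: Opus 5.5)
The paper gives no proof of this fact; it only records it in the appendix from the first installment. Your proposal therefore has to stand on its own, and as written both items have a gap at the decisive step.

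In (1) your setup is right. You pick a covering code $c$ with $m\forkindep[x]c$ by Fact~\ref{fact1}(1), fix $n$ with $m\in F_n(c)$, and apply the product axiom to get $\vd_x$ sets $B_0\ni m$, $B_1\ni c$. The step showing $m\in F=\bigcap_{c'\in B_1}F_n(c')$ does not work as proposed:
\begin{itemize}
\item The set $\{c''\in B_1\colon F_n(c'')=F_n(c')\}$ is in $\vd_{xc'}$, not $\vd_x$. So Fact~\ref{fact1}(1) over $x$ does not apply, and extension over $xm$ or $xc'$ gives independence over the wrong base.
\item Since $F_n(c'')=F_n(c')$, producing such a $c''$ is no easier than proving $m\in F_n(c')$ outright.
\item Your fallback is also unsupported. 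The intersection over those $c'$ independent from $m$ is visibly only in $\vd_{xm}$, and Fact~\ref{fact1}(3) would need it in $\vd_{xw}$ for some $w\forkindep[x]m$ as well.
\end{itemize}
The missing idea is a density argument that applies Fact~\ref{fact1}(1) to $\vd_x$ subsets of $B_0$, not of $B_1$. Fix $c'\in B_1$. For every basic open set $O$ meeting $B_0$, the nonempty set $B_0\cap O$ is in $\vd_x$, so it contains a point $u$ with $u\forkindep[x]c'$. The product axiom then gives $u\in F_n(c')$. Hence every point of $B_0$ lies in the closure of $F_n(c')$, i.e.\ $B_0\subseteq F_n(c')$. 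This yields $m\in F$. More simply, $\overline{B_0}$ is already a closed set in $\vd_x$ containing $m$ and contained in $F_n(c)$, hence nowhere dense. Alternatively, Fact~\ref{fact1}(4) applied to $\{k\colon m\in O_k\}\in\vd_{xm}$ and $\{k\colon O_k\cap F_n(c)=0\}\in\vd_{xc}$ produces such a closed set in one step.

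In (2), two steps are fine: the reduction to the meagerness of $D=\{u\in Y\colon\neg(u\forkindep[x]z)\}\in\vd_{xz}$, and the use of the Baire axiom to get a basic open $O$ in which $D$ is comeager. But ``extension applied inside that open set'' yields no contradiction. Fact~\ref{fact1}(1) only applies to sets in $\vd_x$. It produces some $y'\in O$, even Cohen-generic over $x$, with $y'\forkindep[x]z$. Nothing puts $y'$ into $D$: the set $O\setminus D$ is meager but may be nonempty, and $D\cap O$ itself is only in $\vd_{xz}$.

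What is missing is a converse lemma: if $y'$ is Cohen-generic over $x$ and $y'\forkindep[x]z$, then $y'$ is Cohen-generic over $xz$. It follows from (1) applied over $xz$ together with the argument above, with $z$ in the role of $c$. If $y'$ lay in a closed nowhere dense $K\in\vd_{xz}$, the product axiom and the density argument would give a closed nowhere dense set in $\vd_x$ containing $y'$, a contradiction. With this lemma, the point $y'$ from the first sentence of your (2), which you pick but never use, avoids the meager $\vd_{xz}$ set $O\setminus D$. So $y'\in D$, contradicting $y'\forkindep[x]z$.
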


\noindent The first item together with the inaccessibility axiom shows that the set of elements Cohen-generic over $x$ is a $\vd_x$ dense $G_\gd$ subset of $Y$. The second item shows that independence coincides with mutual genericity as far as Cohen forcing is concerned. If $Y_0$ and $Y_1$ are Polish spaces in $\vd_x$, the phrase ``$y_0\in Y_0$ and $y_1\in Y_1$ are mutually Cohen-generic over $x$'' means that the pair $\langle y_0, y_1\rangle\in Y_0\times Y_1$ is a Cohen-generic point over $x$. By Fact~\ref{fact2}(2), this is equivalent to $y_0$ and $y_1$ being separately Cohen-generic over $x$ and $y_0\forkindep_xy_1$.

Manipulations of Cohen-generic points in this paper can all be reduced to the following proposition.

\begin{proposition}
\label{mutualproposition}
Let $x$ be a set of ordinals, $Y_0, Y_1$ Polish spaces and $f\colon Y_0\to Y_1$ a continuous function with $Y_0, Y_1, f\in\vd_x$. Then

\begin{enumerate}
\item if $y_0\in Y_0$ and $y_1\in Y_1$ are points then $\langle y_0, y_1\rangle$ is a point of the graph of $f$ Cohen-generic over $x$ iff $y_0\in Y_0$ is Cohen generic over $x$ and $y_1=f(y_0)$;
\item if $f$-images of open sets have nonempty interior and $y_0\in Y_0$ is Cohen-generic over $x$ then $f(y_0)\in Y_1$ is Cohen-generic over $x$.
\end{enumerate}
\end{proposition}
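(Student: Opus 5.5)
Both items are proved in the same way, by reducing Cohen-genericity to the statement ``avoids every meager $\vd_x$-set'' and transporting meager sets along continuous maps. Throughout I use Fact~\ref{fact2}(1): a point is Cohen-generic over $x$ exactly when it avoids every closed nowhere dense set in $\vd_x$.

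\textbf{Item (2).} Let $y_0\in Y_0$ be Cohen-generic over $x$. Suppose $M\subseteq Y_1$ is a meager set in $\vd_x$; I must show $f(y_0)\notin M$. It suffices to show that $f^{-1}M$ is meager, since $f^{-1}M\in\vd_x$ and $y_0$ avoids it. By Fact~\ref{fact2}(1) I may assume $M$ is closed nowhere dense; the meager case follows by countable unions. The preimage $f^{-1}M$ is closed by continuity. If it contained a nonempty open set $W$, then $f''W\subseteq M$. By hypothesis $f''W$ has nonempty interior, which contradicts nowhere density of $M$. So $f^{-1}M$ is closed with empty interior, hence nowhere dense, and $f(y_0)\notin M$.

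\textbf{Item (1).} Let $\Gamma\subseteq Y_0\times Y_1$ be the graph of $f$. The projection $\pi\colon\Gamma\to Y_0$ is a homeomorphism in $\vd_x$, with inverse $y\mapsto\langle y,f(y)\rangle$. A homeomorphism preserves meager sets in both directions, and it sends $\vd_x$-sets to $\vd_x$-sets because $\pi$ and $\pi^{-1}$ belong to $\vd_x$.
\begin{itemize}
\item[($\Rightarrow$)] If $\langle y_0,y_1\rangle\in\Gamma$ is Cohen-generic over $x$, then $y_1=f(y_0)$ because the point lies on $\Gamma$. For any meager $\vd_x$-set $M\subseteq Y_0$, the set $\pi^{-1}M$ is a meager $\vd_x$-subset of $\Gamma$, so the pair avoids it and $y_0\notin M$.
\item[($\Leftarrow$)] Apply the same argument with $\pi^{-1}$ in place of $\pi$.
\end{itemize}

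The only point needing care is the bookkeeping that $\vd_x$ contains a countable basis for $\Gamma$. The relative basis inherited from $Y_0\times Y_1$ works, and it is definable from the given bases of $Y_0$ and $Y_1$. I do not expect a real obstacle beyond this.
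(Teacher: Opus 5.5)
Your proof is correct and follows the paper's route. For (2) you pull back closed nowhere dense $\vd_x$-sets along $f$ and use the open-image hypothesis to see that the preimage is nowhere dense. For (1) you transfer genericity through the $\vd_x$-homeomorphism $y\mapsto\langle y,f(y)\rangle$ between $Y_0$ and the graph; the paper phrases this step as applying (2) to that homeomorphism and its inverse, which is the same argument.
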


\begin{proof}
For (2), observe that an $f$-preimage of a $\vd_x$ set nowhere dense in $Y_1$ is $\vd_x$ and nowhere dense in $Y_0$. For (1), note that the function $y\mapsto \langle y, f(y)\rangle$ is a $\vd_x$ homeomorphism between $Y_0$ and the graph of $f$ in the topology inherited from $Y_0\times Y_1$, and use (2) to this homeomorphism.
\end{proof}

\noindent For example, if $G$ is a Polish group in $\vd_x$ and $g_0g_1=g_2$ are three elements of it, then $g_0, g_1$ are mutually generic elements of $G$ iff $g_1$ and $g_2$ are, because by Proposition~\ref{mutualproposition}(1) the mutual genericity is in both cases equivalent to the triple $\langle g_0, g_1, g_2\rangle$ being a generic point of the set $\{\langle h_0, h_1, h_2\rangle\in G^3\colon h_0h_1=h_2\}$ equipped with the topology inherited from $G^3$.

\bibliographystyle{plain} 
\bibliography{odkazy,zapletal,shelah}

\end{document}